\documentclass[10pt,reqno]{amsart}
\usepackage[dvipsnames]{xcolor}
\usepackage{mathabx}
\usepackage[margin = 1.2in]{geometry}
\usepackage{graphicx, listings} 
\usepackage{subcaption}
\usepackage[backend=biber, maxbibnames=99, maxcitenames=2]{biblatex}
\newcommand{\mc}[1]{\mathcal{#1}}
\newcommand{\Ka}[2]{(\mathcal{K}_D^{-#1,#2})^*}
\def \vp{\varphi}

\usepackage{caption, braket, float, pdfpages}
\usepackage[inkscapearea=page]{svg}
\usepackage{adjustbox}
\usepackage[makeroom]{cancel}
\usepackage{pgffor, mathdots}
\usepackage{bbm}
\usepackage{bm}
\usepackage{amsmath, amssymb, amsthm}
\usepackage{tikz, xparse}
\usetikzlibrary{decorations.pathreplacing, positioning}
\usepackage[textsize=tiny]{todonotes}
\usepackage{algorithm, algpseudocode}
\usetikzlibrary{arrows.meta,calc}
\numberwithin{equation}{section}

\usepackage{aliascnt}
\usepackage[
  colorlinks=true,
  linkcolor=BrickRed,
  citecolor=RoyalBlue,
  urlcolor=RoyalBlue
]{hyperref}
\usepackage{cleveref}

\newtheorem{theorem}{Theorem}[section]

\newaliascnt{lemma}{theorem}
\newtheorem{lemma}[lemma]{Lemma}
\aliascntresetthe{lemma}

\newaliascnt{proposition}{theorem}
\newtheorem{proposition}[proposition]{Proposition}
\aliascntresetthe{proposition}

\newaliascnt{corollary}{theorem}
\newtheorem{corollary}[corollary]{Corollary}
\aliascntresetthe{corollary}

\newaliascnt{definition}{theorem}
\newtheorem{definition}[definition]{Definition}
\aliascntresetthe{definition}

\newaliascnt{example}{theorem}

\aliascntresetthe{example}

\newaliascnt{remark}{theorem}
\newtheorem{remark}[remark]{Remark}
\aliascntresetthe{remark}

\newaliascnt{assumption}{theorem}

\aliascntresetthe{assumption}

\crefname{theorem}{Theorem}{Theorems}
\crefname{lemma}{Lemma}{Lemmas}
\crefname{proposition}{Proposition}{Propositions}
\crefname{corollary}{Corollary}{Corollaries}
\crefname{definition}{Definition}{Definitions}
\crefname{example}{Example}{Examples}
\crefname{remark}{Remark}{Remarks}
\crefname{assumption}{Assumption}{Assumptions}

\newcommand{\vect}[1]{\boldsymbol{#1}}

\newcommand{\outercdots}{\textcolor{Cerulean!90}{\cdots}}
\newcommand{\outervdots}{\textcolor{Cerulean!90}{\vdots}}
\newcommand{\outerddots}{\textcolor{Cerulean!90}{\ddots}}
\newcommand{\outeriddots}{\textcolor{Cerulean!90}{\iddots}}

\newcommand{\innercdots}{\textcolor{red!55}{\cdots}}
\newcommand{\innervdots}{\textcolor{red!55}{\vdots}}
\newcommand{\innerddots}{\textcolor{red!55}{\ddots}}
\newcommand{\inneriddots}{\textcolor{red!55}{\iddots}}

\makeatletter
\newcommand\ackname{Acknowledgements}
\if@titlepage
  \newenvironment{acknowledgements}{%
      \titlepage
      \null\vfil
      \@beginparpenalty\@lowpenalty
      \begin{center}%
        \bfseries \ackname
        \@endparpenalty\@M
      \end{center}}%
     {\par\vfil\null\endtitlepage}
\else
  \newenvironment{acknowledgements}{%
      \if@twocolumn
        \section*{\abstractname}%
      \else
        \small
        \begin{center}%
          {\bfseries \ackname\vspace{-.5em}\vspace{\z@}}%
        \end{center}%
        \quotation
      \fi}
      {\if@twocolumn\else\endquotation\fi}
\fi

\makeatletter
\newcommand\codename{Code Availability}
\if@titlepage
  \newenvironment{code}{%
      \titlepage
      \null\vfil
      \@beginparpenalty\@lowpenalty
      \begin{center}%
        \bfseries \codename
        \@endparpenalty\@M
      \end{center}}%
     {\par\vfil\null\endtitlepage}
\else
  \newenvironment{code}{%
      \if@twocolumn
        \section*{\abstractname}%
      \else
        \small
        \begin{center}%
          {\bfseries \codename\vspace{-.5em}\vspace{\z@}}%
        \end{center}%
        \quotation
      \fi}
      {\if@twocolumn\else\endquotation\fi}
\fi

\def \i{\mathrm{i}}

\makeatother

\title[Waveguiding in systems of high-contrast resonators]{Waveguiding in systems of high-contrast resonators: Theory and fast computations}
\author[H. Ammari]{Habib Ammari}
	\address{ETH Z\"urich, Department of Mathematics, Rämistrasse 101, 8092 Z\"urich, Switzerland}
	\email{habib.ammari@math.ethz.ch}

 \author[B. Li]{Bowen Li}
 \address{Department of Mathematics,  City University of Hong Kong, Kowloon Tong, Hong Kong SAR}
 \email{bowen.li@cityu.edu.hk}

	\author[B. Miao]{Borui Miao}
	\address{Yau Mathematical Sciences Center, Tsinghua University, 100084 Beijing, China}
	\email{mbr@mail.tsinghua.edu.cn}
	\author[J. Qiu]{Jiayu Qiu}
	\address{ETH Z\"urich, Department of Mathematics, Rämistrasse 101, 8092 Z\"urich, Switzerland}
	\email{jiayu.qiu@sam.math.ethz.ch}
\author[L. Vrabac]{Lara Vrabac}
\address{ETH Z\"urich, Department of Mathematics, Rämistrasse 101, 8092 Z\"urich, Switzerland}
	\email{lara.vrabac@sam.math.ethz.ch}
\date{}

\subjclass[2020]{Primary 65N25, 35P05; Secondary 65N12, 35J05, 47A58}

	\keywords{high-contrast resonators, non-subwavelength waveguiding, bent defect,
    frequency-dependent capacitance operator,
resolvent convergence, exponential operator compression, local patch approximation}

\begin{document}

\begin{abstract} In this work, we study guided modes in systems of high-contrast resonators near nonzero interior Neumann frequencies, beyond the subwavelength regime. In the regular exterior regime, where the exterior Dirichlet problem is well-posed at the reference wavenumber, we introduce an infinite-dimensional frequency-dependent capacitance operator obtained by compressing the exterior Helmholtz Dirichlet-to-Neumann map to the traces of the interior resonant Neumann eigenspaces. 
We prove the norm-resolvent convergence of the continuous problem to this discrete effective operator as the contrast $\delta\to0$, and derive first-order asymptotic formulas for compact-defect frequencies and line-defect band functions. We then establish exponential off-diagonal decay of the capacitance coefficients by a Combes--Thomas argument, yielding an exponentially accurate truncation of the discrete operator, and show that its retained coefficients can be computed from local Helmholtz problems. At the physical frequency, this local approximation converges exponentially under a uniform stability assumption for the growing finite-cluster problems. The stability assumption can be removed by introducing a vanishing complex absorption together with a Hermitian symmetrization. In particular, an absorption parameter of order $\sqrt\delta$, together with interaction truncation and patch radii of order $|\log\delta|$, suffices to preserve the $\mathcal O(\delta^2)$ accuracy of the first-order high-contrast expansion of the defect eigenfrequencies, yielding a fast computational method. Numerical experiments for dipole and quadrupole resonances illustrate the accuracy, exponential locality, and applicability of the discrete model to straight and bent waveguides generated by material or geometric detuning. 
\end{abstract}

\maketitle

\section{Introduction}

Periodic systems of high-contrast resonators provide a framework for analyzing and controlling wave propagation through the hybridization of localized resonances supported by individual resonators; see~\cite{cbms,ammariSubwavelengthGuidedModes2021,erik2019,anderson} for the resulting localized and guided waves in the subwavelength regime, or equivalently, the low-frequency range. In this case, the relevant resonance of each connected resonator originates from the constant Neumann eigenmode, and the leading-order spectral behavior of the full system is governed by a discrete capacitance matrix for finite configurations and by a capacitance operator for infinite systems. This reduction is particularly useful for analyzing defect modes, line-defect guided modes, and topologically protected interface and edge modes \cite{ammari2020robust,ammari2024functional,ammari2019topologically,jiayu2026a,jiayu2026b}.

Waveguiding beyond the subwavelength regime requires a different effective description. In fact, it was recently shown in~\cite{li2025high,ammariFrequencydependentCapacitanceMatrix2026} that the relevant scattering resonant frequencies are close to nonzero interior Neumann eigenfrequencies of the individual resonators. The associated interior eigenspaces may then have dimensions greater than one, and the exterior fields satisfy a Helmholtz equation rather than a harmonic equation. Therefore, the effective discrete model could have several degrees of freedom per resonator and depends explicitly on the interior Neumann eigenfrequencies. In addition, radiation effects can contribute at leading order.

More precisely, in~\cite{ammariFrequencydependentCapacitanceMatrix2026}, we introduced the frequency-dependent capacitance formalism to approximate the non-subwavelength resonances, called Fabry–P\'erot resonances, in finite and periodic systems of high-contrast resonators. In the regular case, where the leading-order resonant frequency corresponds to an interior Neumann frequency, but the associated exterior wave number squared belongs to the resolvent of the 
exterior Dirichlet problem, \emph{i.e.}, under assumptions \eqref{gapassump1}, \eqref{gapassump1uniform}, and \eqref{gapassump2equiv}, the leading shifts of the resonant frequencies are governed by a frequency-dependent capacitance matrix, obtained by compressing the exterior Dirichlet-to-Neumann (DtN) map to the traces of the interior resonant Neumann eigenspace. For periodic structures, the eigenvalues of the quasi-periodic capacitance matrices then give the first-order asymptotics of the Bloch band functions; see~\cite{ammariFrequencydependentCapacitanceMatrix2026}.

On the other hand, a resolvent-convergence and patch-approximation framework for general, possibly nonperiodic, systems in the subwavelength regime is developed in~\cite{ammari2026resolvent_con}. This framework rigorously justifies the reduction of the continuous high-contrast scattering resonance problem to an infinite-dimensional capacitance operator and introduces a local patch method for its efficient computation. This patch method replaces each global exterior problem that defines a column of the capacitance operator by a bounded local problem centered on the corresponding resonator, yielding an exponentially small error in the operator norm.

In this paper, we study waveguiding in systems of high-contrast resonators near nonzero interior Neumann frequencies. Our main objective is not only to derive an effective discrete model beyond the subwavelength regime, but also to identify the mathematical structure that makes this effective problem locally computable. The central mechanism is the exterior spectral gap. It first permits the reduction of the continuous high-contrast problem to a frequency-dependent discrete operator acting on the interior resonant modes, and then forces the interaction coefficients of this operator to decay exponentially in space. As a consequence, the effective operator is exponentially compressible and can be approximated by local computations with a quantitatively controlled error. 

Our first main result is an operator-norm resolvent reduction of the continuous high-contrast defect eigenvalue problem to the frequency-dependent capacitance operator $\mathcal{C}(\omega_0)$; see \cref{def:freq-cap-operator} and \cref{thm_cont_to_disc_resolvent_converge}.
The active degrees of freedom are the interior Neumann eigenspaces at the reference frequency $\omega_0$, which may have dimension larger than one, and hence the effective model is in general a block operator.  This reduction yields the corresponding spectral asymptotics. Specifically, for a compact defect, if $\lambda$ is an eigenvalue of $\mathcal C(\omega_0)$, then the associated defect eigenfrequencies satisfy \[ \omega_\delta = \omega_0+\delta\lambda+ \mathcal{O}(\delta^2). \] For a straight line defect, a partial Floquet--Bloch transform gives the quasi-periodic capacitance matrices $\mathcal C^\alpha(\omega_0)$, whose eigenvalues $\lambda_j^\alpha$ determine the guided band functions by \[ \omega_j^\alpha(\delta) = \omega_0+\delta\lambda_j^\alpha + \mathcal{O}(\delta^2), \] uniformly in $\alpha$. See \cref{cor:compact-asymptotics,cor:line-asymptotics} respectively. We also show that these defect eigenvalues and guided bands lie in a spectral gap of the unperturbed bulk operator for sufficiently small $\delta$.


Our second main result is the exponential localization of this effective discrete operator $\mathcal{C}(\omega_0)$ for the defect eigenvalue problem (\cref{thm:decay}). Using a Combes--Thomas estimate for the exterior Dirichlet resolvent, we prove that the matrix coefficients of $\mathcal{C}(\omega_0)$ satisfy, for some $\beta > 0$,  \[ \left| \mathcal{C}_{(\vect{j},\ell),(\vect{j}',\ell')}(\omega_0) \right| \lesssim  e^{-\beta |\vect{j} - \vect{j}'|}. \] Thus, although the effective interaction is generally infinite-range, it is exponentially compressible in the sense that $\mathcal{C}(\omega_0)=\mathcal{C}_N^{\mathrm{tr}}(\omega_0)+\mathcal{O}(e^{-\mu N})$, where $\mathcal{C}_N^{\mathrm{tr}}(\omega_0)$ is obtained by retaining only interactions of range at most $N$ (\cref{prop:exact-truncation}). Equivalently, an interaction radius $N=\mathcal{O}(\log\varepsilon^{-1})$ suffices to achieve operator accuracy $\mathcal{O}(\varepsilon)$. More generally, if the defect set has intrinsic dimension $d_{\mathrm{act}}$, then only $\mathcal{O}((\log\varepsilon^{-1})^{d_{\mathrm{act}}})$ interaction blocks need to be retained per resonant site. In particular, for a finite line-like defect containing $N_{\mathrm{act}}$ resonators, the compressed effective matrix has only $\mathcal{O}(N_{\mathrm{act}}\log\varepsilon^{-1})$ nonzero blocks, compared with the $\mathcal{O}(N_{\mathrm{act}}^2)$ blocks of a dense matrix. 


Our third main result concerns the fast computation of this compressed operator. Recall that each coefficient of $\mathcal C(\omega_0)$ is defined through a global exterior Helmholtz problem \eqref{eq:def-Vjl-updated}. We show that these coefficients can instead be approximated by solving finite local problems centered at the corresponding source resonators. More precisely, at the physical real frequency $\omega_0$, the resulting patch approximation converges exponentially, provided that the growing local Helmholtz problems satisfy a uniform resolvent bound; see \cref{lem:dipatch}. This is the natural analogue of the subwavelength patch approximation developed in \cite{ammari2026resolvent_con}, but with an important difference: at non-subwavelength frequencies, the maximum-principle argument available for harmonic patch problems is no longer applicable, and growing real-frequency clusters may approach scattering resonances.
To remove this additional real-frequency stability assumption, we introduce a vanishing complex absorption
$$
z_\gamma=\omega_0^2+\mathrm i\gamma,\qquad \gamma>0.
$$
The complex shift moves the finite-cluster problem uniformly away from the real spectrum and yields an $\mathcal O(\gamma^{-1})$ resolvent bound independent of the cluster size. Using Schwarz reflection symmetry together with Hermitian symmetrization, we cancel the first-order regularization bias and obtain the unconditional estimate in \cref{thm:stabilized-finite-cluster}:
$$
\mathcal C(\omega_0) = \widehat{\mathcal C}_{N,R,\gamma} + \mathcal O\left(\gamma^2+e^{-\beta N}+\gamma^{-1}e^{-\mu R}\right),
$$
where $N$ is the interaction truncation radius and $R$ is the local finite-cluster radius. In particular, choosing $\gamma_R=e^{-\mu R/3}$ and $N\asymp R$ immediately yields an exponentially convergent stabilized local approximation without any uniform finite-cluster stability assumption. Moreover, by taking $\gamma=\sqrt{\delta}$ and $N,R=\mathcal O(|\log\delta|)$, we show that the finite-cluster approximation preserves the $\mathcal O(\delta^2)$ accuracy of the first-order effective model (\cref{cor:delta-log-patch}):
$$
\omega_{\delta,j} = \omega_0 + \delta\widehat\lambda_{j,N,R,\sqrt{\delta}} +
\mathcal O(\delta^2),
$$
with the analogous estimate holding uniformly in the quasi-momentum $\alpha$ for line defects.


The resulting framework does not require global periodicity and is therefore particularly suited to nonperiodic configurations such as bent waveguides. We illustrate the theory numerically for dipole and quadrupole interior Neumann resonances generated by both material and geometric detuning. For straight line defects, the effective band functions agree closely with those obtained from the full characteristic-value formulation, while the real-space interaction blocks exhibit the predicted exponential decay and finite-patch convergence. We then apply the same local construction to sharply bent waveguides and reconstruct localized dipole and quadrupole modes. The numerical experiments use the unregularized outgoing finite-cluster formulation at the physical frequency; the complex-frequency construction above provides a stable alternative when growing real-frequency clusters become poorly conditioned. Compared to existing approaches such as the supercell method or the method based on the DtN map \cite{sofiane,supercell2,supercell1,sonia}, our approach significantly reduces computational costs by using the locality of the effective operator and local patch computations. Indeed, band structure calculations by the aforementioned methods involve high computational costs due to the fine mesh required by the high-contrast regime and the size of the computational domain. 


The paper is organized as follows. \Cref{sec:problem-formulation} introduces material and shape defects in a unified continuous formulation and derives the exterior DtN operator pencil. \Cref{sec:capacitance-spectral} introduces the full frequency-dependent capacitance operator, its partial Floquet representation for a straight line defect, the norm-resolvent and spectral asymptotics, and the exponential decay of its off-diagonal coefficients. \Cref{sec:pathch} develops the local computational theory, including exponential operator compression, real-frequency patch approximations, complex-frequency stabilization, and the resulting spectral-error and complexity estimates. \Cref{sec:numerical} presents numerical experiments and \Cref{sec:conclusion} makes concluding remarks. The appendix collects the boundary-integral formulations used in the computations.

\section{High contrast crystal with defects}
\label{sec:problem-formulation}

\subsection{The defect resonance problem}
\label{subsec:continuous-problem}

In this section, we formulate a unified whole-space spectral problem for high-contrast resonator systems with compactly supported or extended defects. In particular, the formulation established in this section does not require periodicity of the underlying structure.

Let $d=2,3$, and let $\Lambda$ be the lattice generated by linearly independent vectors $e_1,\dots,e_d\in\mathbb R^d$:
\begin{equation*}
\Lambda := \left\{ j_1e_1+\cdots+j_de_d:\ j_1,\dots,j_d\in\mathbb Z \right\}.
\end{equation*}
When convenient, we identify the lattice vector $\vect{j}=j_1e_1+\cdots+j_de_d$ with its coordinate tuple $(j_1,\dots,j_d)$. We use the fundamental cell
\begin{equation*}
Y := \left\{ c_1e_1+\cdots+c_de_d:\ 0\leq c_1,\dots,c_d<1 \right\},
\end{equation*}
and let $D,\widetilde D\Subset Y$ be bounded connected $C^{1,1}$-domains. In particular, the domain $D$ represents the unperturbed reference resonator in the unit cell, while $\widetilde D$ denotes its perturbed counterpart with a deformed boundary. On the other hand, we denote the unperturbed and perturbed material parameters by $v_b,\widetilde v_b>0$. Let $\mathcal J_{\mathrm{sh}},\mathcal J_{\mathrm{mat}}\subset\Lambda$ be the sets on which the shape and the material parameter, respectively, are perturbed, and put
\begin{equation*}
\mathcal J_{\mathrm{def}} := \mathcal J_{\mathrm{sh}}\cup\mathcal J_{\mathrm{mat}},
\end{equation*}
which labels the defected resonators. For $\vect{j}\in\Lambda$, set
\begin{equation} \label{def:resonantor}
B_{\vect{j}} :=
\begin{cases}
\widetilde D,&\vect{j}\in\mathcal J_{\mathrm{sh}},\\
D,&\vect{j}\notin\mathcal J_{\mathrm{sh}},
\end{cases}
\qquad v_{\vect{j}} :=
\begin{cases}
\widetilde v_b,&\vect{j}\in\mathcal J_{\mathrm{mat}},\\
v_b,&\vect{j}\notin\mathcal J_{\mathrm{mat}},
\end{cases}
\qquad D_{\vect{j}}:=B_{\vect{j}}+\vect{j}.
\end{equation}
Thus, a material parameter defect corresponds to $\mathcal J_{\mathrm{sh}}=\varnothing$, a shape defect to $\mathcal J_{\mathrm{mat}}=\varnothing$, and our framework permits both types of perturbation. Finally, the collection of resonators is denoted by
\begin{equation*}
    \mathcal D:=\bigcup_{\vect{j}\in\Lambda}D_{\vect{j}}.
\end{equation*}

It is said that $\mathcal{D}$ is a compactly defected structure if $|\mathcal J_{\mathrm{def}}|<\infty$, and noncompact otherwise. A noncompact defected structure may retain a lower-dimensional periodicity, as in a straight line defect, or be completely nonperiodic, as in a bent waveguide. The precise straight-line setting used below is specified in \eqref{eq:defectindex}; see \Cref{fig:illustration2}.

Let $0<\delta\ll1$ be the contrast parameter and let $v>0$ be the exterior wave speed. Define
\begin{equation*}
a_\delta := \delta\mathbbm{1}_{\mathbb R^d\setminus\overline{\mathcal D}}+\mathbbm{1}_{\mathcal D}, \qquad m_\delta := \frac{\delta}{v^2}\mathbbm{1}_{\mathbb R^d\setminus\overline{\mathcal D}} +\sum_{\vect{j}\in\Lambda}\frac{1}{v_{\vect{j}}^2}\mathbbm{1}_{D_{\vect{j}}}.
\end{equation*}
Here, $\mathbbm{1}_{\mathbb{R}^d \setminus \overline{\mathcal{D}}}$ (respectively, $\mathbbm{1}_{\mathcal{D}}$ and $\mathbbm{1}_{D_{\vect{j}}}$) denotes the characteristic function of $\mathbb{R}^d \setminus \overline{\mathcal{D}}$ (respectively, $\mathcal{D}$ and $D_{\vect{j}}$). 
The closed form
\begin{equation} \label{def:formLdelta}
(u,w)\longmapsto \int_{\mathbb R^d}a_\delta\nabla u\cdot\nabla\overline w\,\mathrm dx, \qquad u,w\in H^1(\mathbb R^d),
\end{equation}
defines a self-adjoint elliptic operator $\mathcal L_\delta$ in $L^2(\mathbb R^d,m_\delta\,\mathrm dx)$. Equivalently,
\begin{equation} \label{def:perturbedop}
\mathcal L_\delta:\operatorname{dom}(\mathcal L_\delta)\subset L^2(\mathbb R^d,m_\delta\,\mathrm dx) \longrightarrow L^2(\mathbb R^d,m_\delta\,\mathrm dx),
\qquad \mathcal L_\delta u = -m_\delta^{-1}\nabla\cdot(a_\delta\nabla u),
\end{equation}
with its natural operator domain
\begin{equation*}
\operatorname{dom}(\mathcal L_\delta):=\Big\{u\in H^1(\mathbb R^d):
m_\delta^{-1}\nabla\cdot(a_\delta\nabla u)\in L^2(\mathbb R^d,m_\delta\,\mathrm dx)\Big\},
\end{equation*}
where the divergence is understood in the sense of distributions.

With $z=\omega^2$, the whole-space spectral problem $(\mathcal L_\delta-z)u=0$ is equivalent to
\begin{equation}
\label{eq:scattering-problem}
\begin{cases}
\Delta u+\dfrac{\omega^2}{v^2}u=0 &\text{in }\mathbb R^d\setminus\overline{\mathcal D},\\
\Delta u+\dfrac{\omega^2}{v_{\vect{j}}^2}u=0 &\text{in }D_{\vect{j}},\\
u|_+=u|_- &\text{on }\partial D_{\vect{j}},\\
\delta\partial_\nu u|_+-\partial_\nu u|_-=0 &\text{on }\partial D_{\vect{j}},
\end{cases}
\qquad \vect{j}\in\Lambda.
\end{equation}
Here, $\nu$ is the unit normal pointing out of $D_{\vect{j}}$, and the signs $+$ and $-$ denote exterior and interior traces, respectively.

\subsection{Dirichlet-to-Neumann formulation and operator pencil}
\label{subsec:NtD}

In this work, we are interested in the spectral properties of $\mathcal{L}_{\delta}$ as $\delta\to 0$, while $\delta$-dependence of the domain $\operatorname{dom}(\mathcal{L}_{\delta})$ hinders the asymptotic analysis. In this section, we introduce an alternative characterization of the eigenvalue problem \eqref{eq:scattering-problem}, which has a $\delta$-independent domain as in \cite{ammari2026resolvent_con}. The idea is to reformulate the eigenvalue problem \eqref{eq:scattering-problem} in the interior domain $\mathcal{D}$ (\emph{i.e.}, inside the resonators) by eliminating the exterior field with the help of the exterior DtN map \cite{Feppon2024}. 

Here and in what follows, we use the interior space $H^1(\mathcal D) := \bigoplus_{\vect{j}\in\Lambda}H^1(D_{\vect{j}})$ with the standard $H^1$ norm. Let $\gamma_+$ denote the exterior trace map on $H^1(\mathbb R^d\setminus\overline{\mathcal D})$. We define the exterior trace space by
\begin{equation*}
H^{1/2}(\partial\mathcal D):=\gamma_+H^1(\mathbb R^d\setminus\overline{\mathcal D}),
\qquad \|\phi\|_{H^{1/2}(\partial\mathcal D)} :=\inf_{\substack{U\in H^1(\mathbb R^d\setminus\overline{\mathcal D})\\ \gamma_+U=\phi}}\|U\|_{H^1(\mathbb R^d\setminus\overline{\mathcal D})},
\end{equation*}
and denote its anti-dual by $H^{-1/2}(\partial\mathcal D)$. We denote by $\mathcal{B}(X,Y)$ the space of bounded linear operators from $X$ to $Y$ and write $\mathcal{B}(X):=\mathcal{B}(X,X)$. 
  
Let $\sigma(-\Delta_{\mathrm{Dir}}; \mathbb R^d\setminus\overline{\mathcal D})$
be the Dirichlet spectrum of $-\Delta$ in $\mathbb R^d\setminus\overline{\mathcal D}$. For any $z\in\mathbb C$ such that $z/v^2 \notin \sigma(-\Delta_{\mathrm{Dir}}; \mathbb R^d\setminus\overline{\mathcal D})$,
the exterior DtN map $\mathcal{T}^{z}$ is defined as follows.     
\begin{definition}
For $\phi\in H^{1/2}(\partial \mathcal{D})$, the exterior DtN map $\mathcal{T}^{z}$ is given by
\begin{equation*}
	\begin{aligned}
	    \mathcal{T}^{z}: H^{1/2}(\partial \mathcal{D}) &\to H^{-1/2}(\partial \mathcal{D}), \\
		\phi & \mapsto \partial_\nu u \bigg|_{+},
		\end{aligned}
\end{equation*}
where $u\in H^1(\mathbb R^d\setminus\overline{\mathcal D})$ is the unique finite-energy solution of
\begin{equation} \label{eq_d2n_def}
	\left\{ \begin{aligned}
	&-\Delta u- \frac{z}{v^2} u=0 \quad \text{in }\mathbb R^d\setminus\overline{\mathcal D}, \\
	&u\big|_{+}=\phi \quad \text{on } \partial \mathcal{D}.
	\end{aligned}\right.
\end{equation}
\end{definition} 

The following property was established in the subwavelength regime, \emph{i.e.}, for $z$ in a small neighborhood of $0$: in \cite[Proposition 3.1]{Feppon2024} when $\mathcal{D}$ consists of finitely many resonators, and in \cite[Proposition 2.2]{qiu2025nonlinear} in the periodic case. It was subsequently extended to arbitrary configurations $\mathcal{D}$ in \cite{ammari2026resolvent_con}. We note that the restriction to a neighborhood of $0$ enters the argument of \cite{ammari2026resolvent_con} only through the existence of the exterior resolvent; the construction itself applies to every $z$ such that $z/v^2$ belongs to the resolvent set $\rho(-\Delta_{\mathrm{Dir}}; \mathbb R^d\setminus\overline{\mathcal D})$ of the exterior Dirichlet Laplacian. In the subwavelength regime, such a neighborhood is available because the geometric assumptions of \cite{ammari2026resolvent_con} yield a uniform Poincar\'e inequality on $\mathbb{R}^d\setminus\overline{\mathcal{D}}$, and hence $\inf\sigma(-\Delta_{\mathrm{Dir}}; \mathbb R^d\setminus\overline{\mathcal D}) >0$; see \cite[Appendix A]{ammari2026resolvent_con}. 

\begin{proposition}
\label{prop_d2n_map}
The operator-valued map
\begin{equation*}
z\longmapsto\mathcal T^z\in\mathcal B\bigl(H^{1/2}(\partial\mathcal D),H^{-1/2}(\partial\mathcal D)\bigr)
\end{equation*}
is well-defined and analytic on the set
\begin{equation*}
\left\{z\in\mathbb C:\frac{z}{v^2}\in\rho(-\Delta_{\mathrm{Dir}}; \mathbb R^d\setminus\overline{\mathcal D})\right\}.
\end{equation*}
For every real $z$ in this set, $\mathcal T^z$ is self-adjoint with respect to the boundary duality pairing.
\end{proposition}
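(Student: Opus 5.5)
The plan is to write $\mathcal T^z$ as a composition of a fixed, $z$-independent lifting with the exterior Dirichlet resolvent. This reduces both analyticity and self-adjointness to standard properties of the resolvent $R(\zeta)=(-\Delta_{\mathrm{Dir}}-\zeta)^{-1}$ on $\Omega:=\mathbb R^d\setminus\overline{\mathcal D}$, viewed as a map $H^{-1}(\Omega)\to H^1_0(\Omega)$.

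\textbf{Step 1 (lifting).} By the definition of the norm on $H^{1/2}(\partial\mathcal D)$ as a quotient norm, there is a bounded linear right inverse $E:H^{1/2}(\partial\mathcal D)\to H^1(\Omega)$ of $\gamma_+$. For instance, take $E\phi$ to be the minimal-norm extension, i.e.\ the $H^1$-orthogonal projection onto $(H^1_0(\Omega))^\perp$; this is linear with $\|E\|\le1$. The point is that no uniform geometric assumption is needed here, since the trace space is defined intrinsically.

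\textbf{Step 2 (existence, uniqueness, analyticity).} For $\zeta=z/v^2\in\rho(-\Delta_{\mathrm{Dir}};\Omega)$, write $u=E\phi+w$ with $w\in H^1_0(\Omega)$ solving $(-\Delta-\zeta)w=-(-\Delta-\zeta)E\phi\in H^{-1}(\Omega)$, where $(-\Delta-\zeta)E\phi$ is understood as the functional $\psi\mapsto\int\nabla E\phi\cdot\nabla\bar\psi-\zeta E\phi\,\bar\psi$. Uniqueness holds because the difference of two solutions lies in $H^1_0$ and is annihilated by $-\Delta_{\mathrm{Dir}}-\zeta$.

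The main technical point, and the step I expect to require the most care, is that the resolvent of the self-adjoint operator $-\Delta_{\mathrm{Dir}}$ on $L^2(\Omega)$ extends to an analytic $\mathcal B(H^{-1},H^1_0)$-valued function on the resolvent set. I would handle this with the identity
\[
R(\zeta)=R(\zeta_0)+(\zeta-\zeta_0)R(\zeta_0)R(\zeta),
\]
with $\zeta_0=-1$, where $R(\zeta_0):H^{-1}\to H^1_0$ is an isomorphism by Lax--Milgram. Writing
\[
R(\zeta)=\bigl[I+(\zeta-\zeta_0)R(\zeta)\bigr]R(\zeta_0)
\]
then exhibits $R(\zeta)$ as $L^2$-analytic factors composed with the bounded embeddings $H^1_0\hookrightarrow L^2\hookrightarrow H^{-1}$. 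A second application of the same identity upgrades the target space to $H^1_0$.

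\textbf{Step 3 (Neumann trace).} The conormal derivative is defined weakly via Green's formula. For $\psi\in H^{1/2}(\partial\mathcal D)$,
\[
\langle\mathcal T^z\phi,\psi\rangle=-\int_\Omega\Bigl(\nabla u\cdot\nabla\overline{E\psi}-\tfrac{z}{v^2}\,u\,\overline{E\psi}\Bigr)\,dx .
\]
This is independent of the choice of extension because $u$ solves the equation against $H^1_0$ test functions. The sign is fixed by $\nu$ pointing into $\Omega$. The resulting expression is bounded by $C\|\phi\|\|\psi\|$. Since $u=E\phi-R(\zeta)(-\Delta-\zeta)E\phi$ depends analytically on $z$ in operator norm, so does $\mathcal T^z$.

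\textbf{Step 4 (self-adjointness for real $z$).} Choose the extensions to be the actual solutions, $E\psi\to u_\psi$, which is allowed by the extension-independence in Step 3. This gives
\[
\langle\mathcal T^z\phi,\psi\rangle=-\int_\Omega\bigl(\nabla u_\phi\cdot\nabla\overline{u_\psi}-\tfrac{z}{v^2}u_\phi\overline{u_\psi}\bigr)\,dx .
\]
For real $z$ this is a Hermitian form in $(\phi,\psi)$, so $\langle\mathcal T^z\phi,\psi\rangle=\overline{\langle\mathcal T^z\psi,\phi\rangle}$. Here one uses that $\overline{u_\psi}$ solves the conjugate problem, and that for real $z$ this is the same problem.
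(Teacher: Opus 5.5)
Your proposal is correct and follows essentially the argument the paper relies on. The paper does not reprove the proposition; it cites earlier work and notes that the construction there goes through verbatim for every $z$ with $z/v^2\in\rho(-\Delta_{\mathrm{Dir}};\mathbb R^d\setminus\overline{\mathcal D})$, namely a bounded lifting of the Dirichlet data plus the exterior Dirichlet resolvent extended to $\mathcal B(H^{-1},H_0^1)$, with the Neumann trace defined weakly by Green's formula — exactly your Steps 1--4, and the same splitting $U=\mathcal E u+(-\Delta_{\mathrm{Dir}}-k_0^2)^{-1}f$ reappears in the paper's proof of \cref{thm:decay}.
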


For any such $z$, eliminating the exterior field in \eqref{eq:scattering-problem} gives the interior problem
\begin{equation}
\label{eq_interior_pde_subwavelength_resonance}
\begin{cases}
-\Delta u-\dfrac{z}{v_{\vect{j}}^2}u=0&\text{in }D_{\vect{j}},\\
\partial_\nu u|_-= \delta\mathcal T^z[u|_{\partial\mathcal D}] &\text{on }\partial D_{\vect{j}},
\end{cases}
\qquad \vect{j}\in\Lambda.
\end{equation}
For $u,w\in H^1(\mathcal D)$, define
\begin{equation}
\label{eq_sesquilinear_form_subwavelength_resonance}
\begin{aligned}
\mathfrak a(u,w;z,\delta) :={}& \sum_{\vect{j}\in\Lambda} \int_{D_{\vect{j}}} \nabla u\cdot\nabla\overline w -\frac{z}{v_{\vect{j}}^2}u\overline w \,\mathrm dx - \delta \left\langle \mathcal T^z[u|_{\partial\mathcal D}], w|_{\partial\mathcal D} \right\rangle_{ H^{-1/2}(\partial\mathcal D), H^{1/2}(\partial\mathcal D)}.
\end{aligned}
\end{equation}
The associated bounded operator,  defined by $\langle\mathcal A_\delta(z)u,w\rangle=\mathfrak a(u,w;z,\delta)$, is Hermitian for real $z$ and acts on the following $\delta$-independent spaces:
\begin{equation*}
\mathcal A_\delta(z): H^1(\mathcal D) \longrightarrow H^1(\mathcal D)^*. 
\end{equation*}
For every real $z$ such that $z/v^2\in\rho(-\Delta_{\mathrm{Dir}};\mathbb R^d\setminus\overline{\mathcal D})$, the DtN reduction above gives
\begin{equation}
\label{eq_variational_characterization_subwavelength_resonance}
z\in\sigma_p(\mathcal L_\delta)\quad\Longleftrightarrow\quad\ker\mathcal A_\delta(z)\neq\{0\}.
\end{equation}
Our analysis is based on the fact that the relevant spectral information is also encoded in the resolvent of $\mathcal A_\delta(z)$. Whenever $\mathcal{A}_\delta(z)$ is invertible, we denote by
\begin{equation*}
\mathcal R(z,\delta): L^2(\mathcal D) \longrightarrow H^1(\mathcal D)
\end{equation*}
the solution operator determined by
\begin{equation*}
\mathfrak a(\mathcal R(z,\delta)f,w;z,\delta) = (f,w)_{L^2(\mathcal D)} \qquad \text{for every }w\in H^1(\mathcal D).
\end{equation*}
We call $\mathcal{R}(z,\delta)$ the resolvent of $\mathcal{A}_\delta(z)$.

\section{Frequency-dependent capacitance operator and spectral asymptotics}
\label{sec:capacitance-spectral}

In this section, we derive a discrete approximation of $\mathcal L_\delta$ and obtain its spectral asymptotics as $\delta\to0$. The main reduction theorem is presented in \Cref{sec:asympspec}. Our central tool is the frequency-dependent capacitance operator introduced in \Cref{sec:2.2}, which extends the frequency-dependent capacitance matrix developed in \cite{ammariFrequencydependentCapacitanceMatrix2026} for periodic media to aperiodic defect configurations. In \Cref{sec:exponentialdecay}, we prove that the off-diagonal coefficients of this operator decay exponentially. This locality property underpins the patch method developed in \Cref{sec:pathch} for the efficient computation of non-subwavelength defect eigenfrequencies and their associated eigenmodes.

\subsection{Full capacitance operator}
\label{sec:2.2}

For a given frequency $\omega_0$ and any $\vect{j}\in\Lambda$, define the local Neumann eigenspace
\begin{equation} \label{def:neumann_space}
E_{\vect{j}}(\omega_0) := \ker\Big(-\Delta_{{\rm Neu}, D_{\vect{j}}}-\frac{\omega_0^2}{v_{\vect{j}}^2} \Big), \qquad m_{\vect{j}}:=\dim E_{\vect{j}}(\omega_0).
\end{equation} 
We now fix a nonzero reference frequency $\omega_0$ such that $\omega_0$ is an interior Neumann eigenfrequency of at least one defect resonator, but not of any unperturbed resonator, that is, 
\begin{equation}
\label{gapassump1}
m_{\vect{j}}=0 \quad\text{for every }\vect{j}\notin\mathcal J_{\mathrm{def}},
\qquad
m_{\vect{k}}>0 \quad\text{for some }\vect{k}\in\mathcal J_{\mathrm{def}}.
\end{equation}
A defect site is called active at frequency $\omega_0$ if its interior Neumann eigenspace at $\omega_0$ is nontrivial, \emph{i.e.} $m_{\vect{j}}>0$. In this case, one can choose an $L^2(D_{\vect{j}})$-orthonormal basis $u_{\vect{j},1},\dots,u_{\vect{j},m_{\vect{j}}}$ satisfying
\begin{equation}
\label{neumanneq}
-\Delta u_{\vect{j},\ell} = \frac{\omega_0^2}{v_{\vect{j}}^2}u_{\vect{j},\ell} \quad\text{in }D_{\vect{j}}, \qquad \partial_\nu u_{\vect{j},\ell}=0 \quad\text{on }\partial D_{\vect{j}}.
\end{equation}
Since there are only four possible pairs of $(B_{\vect{j}},v_{\vect{j}})$ as seen from \eqref{def:resonantor} and the corresponding local Neumann spectra are discrete, it follows that
\begin{equation}
\label{gapassump1uniform}
\inf_{\vect{j}\in\Lambda} \operatorname{dist}\Big( \frac{\omega_0^2}{v_{\vect{j}}^2}, \sigma(-\Delta_{\mathrm{Neu}};D_{\vect{j}}) \setminus \Big\{\frac{\omega_0^2}{v_{\vect{j}}^2}\Big\} \Big) > 0.
\end{equation}
For our analysis, we assume $\omega_0^2/v^2$ belongs to the resolvent set of the exterior Dirichlet Laplacian, that is, 
\begin{equation}
\label{gapassump2equiv}
\operatorname{dist} \Big( \frac{\omega_0^2}{v^2}, \sigma(-\Delta_{\mathrm{Dir}}; \mathbb R^d\setminus\overline{\mathcal D}) \Big) >0.
\end{equation}
In the following, we call \eqref{gapassump1} and \eqref{gapassump1uniform} the \emph{interior-resonance conditions}, and \eqref{gapassump2equiv} the \emph{exterior gap condition}.

When the exterior geometry $\mathbb{R}^d\setminus\overline{\mathcal{D}}$ is periodic, \eqref{gapassump2equiv} is equivalent to
\begin{equation}
\label{gapassump2}
\inf_{\vect{\alpha} \in Y^*} \operatorname{dist} \Big( \frac{\omega_0^2}{v^2}, \sigma(-\Delta_{\vect{\alpha},\mathrm{Dir}};Y\setminus\overline D) \Big) > 0,
\end{equation}
Here, $-\Delta_{\vect{\alpha},\mathrm{Dir}}$ denotes the corresponding exterior Dirichlet Floquet--Bloch component. The dual lattice and the Brillouin torus are given by
\begin{equation*}
\begin{aligned}
\Lambda^*&:=\operatorname{span}_{\mathbb Z}\{e_1^*,\dots,e_d^*\},
&e_i^*\cdot e_j&=2\pi\delta_{ij},\\
Y^*&:=\mathbb R^d/\Lambda^*.
\end{aligned}
\end{equation*}
In particular, \eqref{gapassump2equiv} and \cref{prop_d2n_map} imply that $\mathcal T^z$ is analytic in a neighborhood of $\omega_0^2$.

We define 
\begin{equation} \label{def:activeindex}
    \mathcal I(\omega_0) := \left\{ (\vect{j},\ell):\vect{j}\in\Lambda,\ 1\leq\ell\leq m_{\vect{j}} \right\},
\end{equation}
for the multiplicity of local Neumann spectrum at $\omega_0$. With a slight abuse of notation, the trace on $\partial D_{\vect{j}}$ of the Neumann eigenfunction $u_{\vect{j},\ell}$ in \eqref{neumanneq} is still denoted as $u_{\vect{j},\ell}$, extended by zero to the other boundary components of $\partial\mathcal D$. For each $(\vect{j},\ell)\in\mathcal I(\omega_0)$, by the assumption \eqref{gapassump2equiv}, we can define $U_{\vect{j},\ell}\in H^1(\mathbb R^d\setminus\overline{\mathcal D})$ as the unique solution of
\begin{equation}
\label{eq:def-Vjl-updated}
\begin{cases}
\Big(\Delta+\dfrac{\omega_0^2}{v^2}\Big)U_{\vect{j},\ell}=0 &\text{in }\mathbb R^d\setminus\overline{\mathcal D},\\
U_{\vect{j},\ell}=u_{\vect{j},\ell} &\text{on }\partial D_{\vect{j}},\\
U_{\vect{j},\ell}=0 &\text{on }\partial D_{\vect{j}'},\quad \vect{j}'\neq \vect{j}.
\end{cases}
\end{equation}

\begin{definition}
\label{def:freq-cap-operator}
The \emph{full frequency-dependent capacitance operator} at $\omega_0$ is the operator on $\ell^2(\mathcal I(\omega_0))$ specified by the integral kernel
\begin{equation}
\label{eq:def-freq-cap-operator-entry}
\mathcal C_{(\vect{j},\ell),(\vect{j}',\ell')}(\omega_0)
=-\frac{v_{\vect{j}}v_{\vect{j}'}}{2\omega_0}
\int_{\partial D_{\vect{j}}}
\left.\partial_\nu U_{\vect{j}',\ell'}\right|_+
\overline{u_{\vect{j},\ell}}\,\mathrm d\sigma.
\end{equation}
\end{definition}

Equivalently, define, for $a \in \ell^2(\mathcal I(\omega_0))$, 
\begin{equation} \label{def:GandM}
G a := \sum_{(\vect{j},\ell)\in\mathcal I(\omega_0)}a_{\vect{j},\ell}u_{\vect{j},\ell}, \qquad (Ma)_{\vect{j},\ell}:=v_{\vect{j}}^{-2}a_{\vect{j},\ell}.
\end{equation}
The diagonal operator $M$ is bounded positive with a bounded inverse. Then, the capacitance operator can be formulated as 
\begin{equation}
\label{DtNCapacitance}
\mathcal C(\omega_0) = -\frac{1}{2\omega_0} M^{-1/2}G^*\mathcal T^{\omega_0^2}GM^{-1/2}.
\end{equation}
\begin{remark}
    Intuitively speaking, one can understand our defected resonator system by its analogy to the condensed matter system: the active defect sites, recorded by $\mathcal I(\omega_0)$, correspond to the sites on which the electrons live, and the capacitance operator $\mathcal C$ corresponds to the Hamiltonian that governs the motion of electrons.     
\end{remark}

By the trace inequality, the operator $G:\ell^2(\mathcal I(\omega_0)) \to H^{1/2}(\partial\mathcal D)$ is bounded. Hence, by the boundedness of $G$ and $M^{-1}$, as well as \cref{prop_d2n_map}, we have the following result. 

\begin{proposition}
\label{prop:cap-basic}
Under \eqref{gapassump2equiv}, $\mathcal C(\omega_0)$ is a bounded self-adjoint operator on $\ell^2(\mathcal I(\omega_0))$.
\end{proposition}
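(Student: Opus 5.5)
The plan is to read everything off the factorization \eqref{DtNCapacitance}, $\mathcal C(\omega_0) = -\frac{1}{2\omega_0} M^{-1/2}G^*\mathcal T^{\omega_0^2}GM^{-1/2}$. Boundedness will follow by composing bounded maps. Self-adjointness will follow from the symmetry of $\mathcal T^{\omega_0^2}$ in \cref{prop_d2n_map}, with $\omega_0^2$ real. The only genuine work is to make sure each factor is bounded between the right spaces, uniformly over the infinite index set $\mathcal I(\omega_0)$, and to identify $G^*$ correctly with respect to the duality pairing.

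\textbf{Step 1: $M^{\pm1/2}$.} Since $v_{\vect{j}}\in\{v_b,\widetilde v_b\}$, $M$ is diagonal with entries bounded above and below. Hence $M^{\pm1/2}$ are bounded, positive and self-adjoint on $\ell^2(\mathcal I(\omega_0))$.

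\textbf{Step 2: $G:\ell^2(\mathcal I(\omega_0))\to H^{1/2}(\partial\mathcal D)$ is bounded.}
1. The functions $u_{\vect{j},\ell}$ have disjoint supports in different resonators. Within each $D_{\vect{j}}$ they are $L^2$-orthonormal Neumann eigenfunctions, so $\|\nabla u_{\vect{j},\ell}\|^2_{L^2}=\omega_0^2/v_{\vect{j}}^2$.
2. It follows that $\|\sum a_{\vect{j},\ell}u_{\vect{j},\ell}\|_{H^1(\mathcal D)}^2 = \sum_{\vect{j}} (1+\omega_0^2/v_{\vect{j}}^2)\sum_\ell |a_{\vect{j},\ell}|^2$. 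So $G$ is bounded into $H^1(\mathcal D)$.
3. To land in $H^{1/2}(\partial\mathcal D)$, I need an extension from the interior traces to $H^1(\mathbb R^d\setminus\overline{\mathcal D})$ with norm controlled uniformly in $\vect{j}$. There are only two shapes $D,\widetilde D\Subset Y$. For each, fix one bounded extension operator $H^1(B)\to H^1_0(Y)$ and restrict to the exterior part of the cell. Translating by $\vect{j}$ gives extensions with disjoint supports, one per cell, so their norms add in $\ell^2$.
4. This yields $\|Ga\|_{H^{1/2}(\partial\mathcal D)}\lesssim\|a\|_{\ell^2}$.

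\textbf{Step 3: identify $G^*$ and conclude boundedness.}
1. Define $G^*:H^{-1/2}(\partial\mathcal D)\to\ell^2(\mathcal I(\omega_0))$ by $(G^*\psi)_{\vect{j},\ell}=\langle\psi,u_{\vect{j},\ell}\rangle$, the antidual of $G$.
2. It is bounded with $\|G^*\|=\|G\|$.
3. With this definition, $\langle G^*\mathcal T^{\omega_0^2}Gb,a\rangle_{\ell^2}=\langle \mathcal T^{\omega_0^2}Gb,Ga\rangle$.
4. Unwinding this for unit vectors reproduces the kernel \eqref{eq:def-freq-cap-operator-entry}. The reason is that $\mathcal T^{\omega_0^2}[u_{\vect{j}',\ell'}]=\partial_\nu U_{\vect{j}',\ell'}|_+$ by uniqueness in \eqref{eq:def-Vjl-updated}.
5. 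Under \eqref{gapassump2equiv}, \cref{prop_d2n_map} gives $\mathcal T^{\omega_0^2}\in\mathcal B(H^{1/2},H^{-1/2})$. The composition is therefore bounded on $\ell^2(\mathcal I(\omega_0))$.

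\textbf{Step 4: self-adjointness.} For $a,b\in\ell^2$,
\[
\langle \mathcal C(\omega_0) a,b\rangle_{\ell^2}=-\tfrac{1}{2\omega_0}\langle \mathcal T^{\omega_0^2}GM^{-1/2}a,GM^{-1/2}b\rangle .
\]
Here $\omega_0$ is real, so the prefactor is real. Since $\omega_0^2$ is real and lies in the admissible set, \cref{prop_d2n_map} says this pairing is Hermitian-symmetric in its two arguments. The right-hand side therefore equals $\overline{\langle \mathcal C(\omega_0) b,a\rangle}$. A bounded symmetric operator is self-adjoint.

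\textbf{Main obstacle.} The delicate point is Step 2: the uniform-in-$\vect{j}$ bound on the trace and extension. The norm on $H^{1/2}(\partial\mathcal D)$ is defined through exterior extensions on an unbounded, possibly nonperiodic domain. I will handle this using the finitely many shapes and the compact containment $B\Subset Y$, which make the cellwise extensions uniformly bounded and disjointly supported.
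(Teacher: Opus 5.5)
Your proposal is correct and takes the same route as the paper. The paper invokes the factorization \eqref{DtNCapacitance}, the boundedness of $G$ (by a trace argument), of $M^{-1}$ and of $\mathcal T^{\omega_0^2}$, and the self-adjointness of $\mathcal T^{\omega_0^2}$ for real $\omega_0^2$ from \cref{prop_d2n_map}; your uniformly bounded cellwise extension and your identification of the factorized operator's matrix entries with the kernel \eqref{eq:def-freq-cap-operator-entry} fill in details the paper leaves implicit.
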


\begin{remark}
Note that for $(\vect{j},\ell) \in {\mathcal{I}}(\omega_0)$, we have 
$$
\partial_\nu U_{\vect{j}^\prime,\ell^\prime}\big|_+ = \mathcal{T}^{\omega_0^2}[U_{\vect{j}^\prime,\ell^\prime} \mathbbm{1}_{\partial D_{\vect{j}^\prime}}],
$$
and therefore, \eqref{eq:def-freq-cap-operator-entry} can be rewritten as 
\begin{equation}
\mathcal C_{(\vect{j},\ell),(\vect{j}',\ell')}(\omega_0)
=-\frac{v_{\vect{j}}v_{\vect{j}'}}{2\omega_0}
\int_{\partial D_{\vect{j}}}
\mathcal{T}^{\omega_0^2}[U_{\vect{j}^\prime,\ell^\prime} \mathbbm{1}_{\partial D_{\vect{j}^\prime}}]
\overline{u_{\vect{j},\ell}}\,\mathrm d\sigma.
\end{equation}

Moreover, note also that $\mathcal{T}^{\omega_0^2}$ is self-adjoint and the assumption \eqref{gapassump2equiv} guarantees that $\omega_0^2$ is in a spectral gap of $- v^2 \Delta$ in $\mathbb{R}^d \setminus \overline{\mathcal{D}}$ with Dirichlet boundary conditions, but does not by itself prove the existence of such gaps in arbitrary geometries. A sufficient condition for a spectral gap opening in the spectrum of this operator is when the resonators are nearly touching. In the limiting case, when the region between the resonators is closed, the spectrum of $- \Delta$ in $\mathbb{R}^d \setminus \overline{\mathcal{D}}$ with Dirichlet boundary conditions concentrates near the Dirichlet eigenvalues of $-\Delta$ in such a closed region and, consequently, a spectral gap opening occurs. We refer the reader to  \cite{nazarov1,nazarov2} for more details. 
\end{remark}

\subsection{Line defects and the quasi-periodic capacitance matrix}
\label{subsec:QP}

In this section, we consider $d=2$ and the line defect, \emph{i.e.},  
\begin{equation} \label{eq:defectindex}
\mathcal J_{\mathrm{def}} = \{(j_1,0):j_1\in\mathbb Z\}.
\end{equation}
We assume that every site on the defect line has the same resonator type, denoted by $(D_\star, v_\star)$, in the sense that $(D_{\vect{j}}, v_{\vect{j}})\equiv (D_\star, v_\star)$ for all $\vect{j}\in \mathcal J_{\mathrm{def}}$ with
\begin{equation*}
(D_\star,v_\star)=
\begin{cases}
(\widetilde D,v_b),&\text{for a shape perturbation},\\
(D,\widetilde v_b),&\text{for a material parameter perturbation}.
\end{cases}
\end{equation*}
The representative defect resonator is $D_{(0,0)}=D_\star$ with $m_\star:=m_{(0,0)}$ given as in \eqref{def:neumann_space}. We also assume $m_\star>0$ and introduce the notation, as shown in \Cref{fig:illustration2},  
\begin{equation*}
\begin{aligned}
\widetilde Y := \bigcup_{j_2\in\mathbb Z}(Y+(0,j_2)),
\qquad \widetilde Y^* := \mathbb R/(2\pi\mathbb Z), \qquad
\widetilde\Omega_{\mathrm{line}} := \widetilde Y \setminus \overline{\bigcup_{j_2\in\mathbb Z}D_{(0,j_2)}}.
\end{aligned}
\end{equation*}
Here, $\widetilde{Y}$ denotes the unit strip.  Let $\Gamma_-$ and $\Gamma_+=\Gamma_-+e_1$ denote the two lateral boundaries of the strip. We define the space 
$
H^1_\alpha(\widetilde\Omega_{\mathrm{line}}) := \{ u\in H^1(\widetilde\Omega_{\mathrm{line}}): u|_{\Gamma_+}=e^{\mathrm i\alpha}u|_{\Gamma_-} \}
$ of functions that are $\alpha$-quasi-periodic along the defect and square-integrable in the transverse direction.

\begin{figure}[!h]
    \centering
\begin{tikzpicture}[
    scale=0.8,
    transform shape,
    dot/.style={
        circle,
        draw,
        minimum size=10mm,
        text width=10mm,   
        align=center,
        inner sep=0pt,
        font=\scriptsize
    }
]

\foreach \x in {-2,...,2} {
    \foreach \y in {-2,...,2} {

        \pgfmathtruncatemacro{\col}{\x+3}
        \pgfmathtruncatemacro{\row}{\y+3}

        \ifnum\y=0
            \node[dot,fill=red!43] (c\col\row) at (1.5*\x,1.5*\y)
                {${D_\star}$};
        \else
            \node[dot,fill=Cerulean!30] (c\col\row) at (1.5*\x,1.5*\y)
                {$D$};
        \fi

        \draw[dashed] (1.5*\x + 0.75,-4.2) -- (1.5*\x + 0.75,4.2);
        \draw[dashed] (-4.2, 1.5*\y + 0.75) -- (4.2, 1.5*\y + 0.75);
    }
}
\draw[dashed] (1.5*-3 + 0.75,-4.2) -- (1.5*-3 + 0.75,4.2);
\draw[dashed] (-4.2, 1.5*-3 + 0.75) -- (4.2, 1.5*-3 + 0.75);

\node at (4.5,0) {\Huge $\innercdots$};
\node at (-4.5,0) {\Huge $\innercdots$};
\node at (0, 4.5) {\Huge $\outervdots$};
\node at (0, -4) {\Huge $\outervdots$};

\node at (-4.5, 4.5) {\Huge $\outerddots$};
\node at (4.5, 4.5) {\Huge $\outeriddots$};
\node at (-4.5, -4) {\Huge $\outeriddots$};
\node at (4.5, -4) {\Huge $\outerddots$};

\node at (0, -5) {\Large crystal};

\def\xstrip{10}

\foreach \y in {-2,...,2} {
    \pgfmathtruncatemacro{\row}{\y+3}

    \ifnum\y=0
        \node[dot,fill=red!43] (copy\row) at (\xstrip,1.5*\y)
            {${D_\star}$};
    \else
        \node[dot,fill=Cerulean!30] (copy\row) at (\xstrip,1.5*\y)
            {$D$};
    \fi

    \draw[dashed] ({\xstrip - 0.75},{\y * 1.5 + 0.75}) -- ({\xstrip + 0.75},{\y * 1.5 + 0.75});
}

\draw[dashed] ({\xstrip - 0.75},{4}) -- ({\xstrip - 0.75},{-3.75});
\draw[dashed] ({\xstrip + 0.75},{4}) -- ({\xstrip + 0.75},{-3.75});

\node at (\xstrip, 4.5) {\Huge $\outervdots$};
\node at (\xstrip, -4) {\Huge $\outervdots$};

\node at (\xstrip, -5) {\Large unit strip $\widetilde Y$};

\def\squarex{7}

\draw[dashed] ({\squarex - 1},{4}) -- ({\squarex - 1},{2});
\draw[dashed] ({\squarex + 1},{4}) -- ({\squarex + 1},{2});
\draw[dashed] ({\squarex - 1},{4}) -- ({\squarex + 1},{4});
\draw[dashed] ({\squarex + 1},{2}) -- ({\squarex - 1},{2});

\node[
        circle,
        draw,
        minimum size=12mm,
        text width=10mm,   
        align=center,
        inner sep=0pt,
        font=\scriptsize] at (7,3){};

\draw[->,thick,shorten >=4pt]

    (4,1.5) -- (6, 2){};

\node at (\squarex, 1.7) {\Large unit cell $Y$};

    \draw[->] (5.4,-4.2) -- (6.4,-4.2) node[right] {\footnotesize $e_1$};
    \draw[->] (5.4,-4.2) -- (5.4,-3.2) node[above] {\footnotesize $e_2$};

\draw[->,thick,shorten >=4pt]

    (6,0) node[right, align=center] {defect line}

    -- (5, 0);

\end{tikzpicture}
\caption{Defect line of resonators $D_\star$ with a shape $\widetilde D$ or a material parameter perturbation $\widetilde v_b$ in an infinite crystal with unperturbed resonators $D$. The unit cell Y is shown, as well as the unit strip $\widetilde Y$ along $e_2$ containing one defect resonator and an infinitely many unperturbed resonators on each side.}
\label{fig:illustration2}
\end{figure}

Similarly to \eqref{def:activeindex}, we write 
\begin{equation*}
\widetilde{\mathcal I}(\omega_0) := \{((0,0),\ell):1\leq\ell\leq m_\star\}.
\end{equation*}
Similarly to \eqref{gapassump2},  the exterior gap condition \eqref{gapassump2equiv} is equivalent to
\begin{equation}
\label{gapassump2star}
\inf_{\alpha \in \widetilde Y^*} \operatorname{dist} \Big( \frac{\omega_0^2}{v^2}, \sigma(-\Delta_{\alpha,\mathrm{Dir}}; \widetilde\Omega_{\mathrm{line}}) \Big) > 0.
\end{equation}
For $\alpha\in\widetilde Y^*$ and $1\leq\ell\leq m_\star$, let $V_\ell^\alpha$ be the unique solution in $H^1_\alpha(\widetilde\Omega_{\mathrm{line}})$ of
\begin{equation} \label{eq:def-Vl-line}
\begin{cases}
\Big(\Delta+\dfrac{\omega_0^2}{v^2}\Big)V_\ell^\alpha=0 &\text{in }\widetilde\Omega_{\mathrm{line}},\\
V_\ell^\alpha=u_{(0,0),\ell} &\text{on }\partial D_{(0,0)},\\
V_\ell^\alpha=0 &\text{on }\partial D_{(0,j_2)},\quad j_2\neq0.
\end{cases}
\end{equation}
We define the quasi-periodic capacitance matrix associated with the line defect as follows. 

\begin{definition}
\label{def:freq-cap-QP-line}
The \emph{quasi-periodic frequency-dependent capacitance matrix} at $\omega_0$ is an $m_\star\times m_\star$ Hermitian matrix $\mathcal C^\alpha(\omega_0) =(\mathcal C^\alpha_{\ell,\ell'}(\omega_0))_{\ell,\ell'=1}^{m_\star}$, given by 
\begin{equation}
\label{eq:def-freq-cap-operator-Line}
\mathcal C^\alpha_{\ell,\ell'}(\omega_0) := -\frac{v_\star^2}{2\omega_0} \int_{\partial D_{(0,0)}} \partial_\nu V_{\ell'}^\alpha|_+ \overline{u_{(0,0),\ell}} \,\mathrm d\sigma.
\end{equation}
\end{definition}

Note that the full frequency-dependent capacitance operator $\mathcal C(\omega_0)$ associated with the line defect \eqref{eq:defectindex} is equivalent to the quasi-periodic one $\mathcal C^\alpha(\omega_0)$ by the inverse partial Floquet--Bloch transform
\begin{equation*}
(\mathcal F_1a)_\ell(\alpha) := \sum_{j_1\in\mathbb Z} e^{-\mathrm i\alpha j_1}a_{(j_1,0),\ell}.
\end{equation*}
Specifically, they are related by the direct-integral decomposition 
\begin{equation*}
\mathcal F_1\mathcal C(\omega_0)\mathcal F_1^{-1} = \int_{\widetilde Y^*}^{\oplus} \mathcal C^\alpha(\omega_0) \,\frac{\mathrm d\alpha}{2\pi},
\end{equation*}
or equivalently expressed in terms of their integral kernels,
\begin{equation}
\label{inversesymbolLN}
\mathcal C_{((j_1,0),\ell),((j_1',0),\ell')}(\omega_0) = \frac{1}{2\pi} \int_{\widetilde Y^*} e^{\mathrm i\alpha(j_1-j_1')} \mathcal C^\alpha_{\ell,\ell'}(\omega_0) \,\mathrm d\alpha.
\end{equation}
In particular, we have the spectral decomposition
\begin{equation*}
\sigma(\mathcal C(\omega_0)) = \bigcup_{\alpha\in\widetilde Y^*} \sigma(\mathcal C^\alpha(\omega_0)).
\end{equation*}

\subsubsection*{The unperturbed bulk crystal}

For numerical comparison with the line-defect bands, it is useful to recall the quasi-periodic frequency-dependent capacitance matrix of the unperturbed crystal at one of its own interior Neumann frequencies $\omega_{\mathrm{b}}$. Note that this construction is conceptually separate from the defect reduction at $\omega_0$: the latter assumes that the unperturbed resonator is not Neumann-resonant at $\omega_0$.  
Let $\omega_{\mathrm b}\neq0$ satisfy
\begin{equation}
\label{fullcrystal}
\frac{\omega_{\mathrm b}^2}{v_b^2} \in\sigma(-\Delta_{\mathrm{Neu}};D), \qquad \inf_{\vect{\alpha} \in Y^*} \operatorname{dist} \Big( \frac{\omega_{\mathrm b}^2}{v^2}, \sigma(-\Delta_{\vect{\alpha}, \mathrm{Dir}}; Y \setminus\overline D) \Big)>0.
\end{equation}
Let $w_1,\dots,w_p$ be an $L^2(D)$-orthonormal basis of the corresponding Neumann eigenspace. For $\vect{\alpha}\in Y^*$, define $W_\ell^{\vect{\alpha}}$ by
\begin{equation}
\label{eq:def-Vl}
\begin{cases}
\Big(\Delta+\dfrac{\omega_{\mathrm b}^2}{v^2}\Big)W_\ell^{\vect{\alpha}}=0 &\text{in }Y\setminus\overline D,\\
W_\ell^{\vect{\alpha}}=w_\ell &\text{on }\partial D,\\
W_\ell^{\vect{\alpha}} &\text{is }\vect{\alpha}\text{-quasi-periodic}.
\end{cases}
\end{equation}
The associated bulk capacitance matrix is defined as follows.

\begin{definition}
\label{def:freq-cap-QP}
The $p\times p$ matrix $\mathcal C_{\mathrm{bulk}}^{\vect{\alpha}}(\omega_{\mathrm b})$ has entries
\begin{equation}
\label{eq:def-freq-cap-operatorQP}
(\mathcal C_{\mathrm{bulk}}^{\vect{\alpha}})_{\ell,\ell'}(\omega_{\mathrm b}) := -\frac{v_b^2}{2\omega_{\mathrm b}} \int_{\partial D} \partial_\nu W_{\ell'}^{\vect{\alpha}}|_+ \overline{w_\ell} \,\mathrm d\sigma.
\end{equation}
\end{definition}
If $\lambda_{\mathrm b,1}^{\vect{\alpha}},\dots,\lambda_{\mathrm b,p}^{\vect{\alpha}}$ are the eigenvalues of $\mathcal C_{\mathrm{bulk}}^{\vect{\alpha}}$, then, as $\delta \rightarrow 0$, the bulk Bloch eigenfrequencies satisfy
\begin{equation} \label{approx:alphaj}
\omega_{\mathrm b,j}^{\vect{\alpha}}(\delta) = \omega_{\mathrm b} +\delta\lambda_{\mathrm b,j}^{\vect{\alpha}} +\mathcal O(\delta^2), \qquad j=1,\dots,p,
\end{equation}
uniformly in $\vect{\alpha} \in Y^*$ under the second condition in \eqref{fullcrystal}; see \cite[Theorem 4.3]{ammariFrequencydependentCapacitanceMatrix2026}. 

This asymptotic expansion \eqref{approx:alphaj} is useful in \Cref{sec:numerical} to compute the bulk bands. Moreover, in \Cref{sec:numerical}, we will also illustrate the exponential decay of the off-diagonal entries of the frequency-dependent capacitance operator  $\mathcal C_{\mathrm{bulk}}(\omega_{\mathrm b})$ of the unperturbed crystal. The symbol of such an operator is $\mathcal C_{\mathrm{bulk}}^{\vect{\alpha}}(\omega_{\mathrm b})$. Therefore, using the inverse Floquet--Bloch transform, it can be obtained from $\mathcal C_{\mathrm{bulk}}^{\vect{\alpha}}(\omega_{\mathrm b})$ as follows: 
\begin{equation}
\label{Inversesymbol}
\bigl(\mathcal C_{\mathrm{bulk}}(\omega_{\mathrm b})\bigr)_{ (\vect{j},\ell),(\vect{j}',\ell')} = \frac{|Y|}{(2\pi)^d} \int_{Y^*} e^{\mathrm i\vect{\alpha}\cdot(\vect{j}-\vect{j}')} (\mathcal C_{\mathrm{bulk}}^{\vect{\alpha}})_{\ell,\ell'}(\omega_{\mathrm b}) \,\mathrm d\vect{\alpha}.
\end{equation}

\subsection{Norm-resolvent convergence and spectral asymptotics} \label{sec:asympspec}

In this section, we derive a discrete approximation of the continuous operator $\mathcal L_\delta$ near a reference frequency $\omega_0$, using the frequency-dependent capacitance operator $\mathcal C(\omega_0)$ defined in
\cref{def:freq-cap-operator}. We establish a norm-resolvent convergence result showing that $\mathcal C(\omega_0)$ governs the leading-order spectral behavior of $\mathcal L_\delta$ as $\delta\to0$. As a consequence, for compactly supported defects, the eigenvalues of $\mathcal C(\omega_0)$ determine the leading-order shifts of the defect eigenfrequencies bifurcating from $\omega_0$. For a periodic line defect, the eigenvalues of its quasi-periodic capacitance matrices determine the guided defect bands bifurcating from $\omega_0$ within a band gap of the unperturbed crystal.

To this end, we introduce the rescaled spectral parameter $z_{\delta,\zeta}:=\omega_0^2+\delta\zeta$ with $\zeta$ in a compact set $\mathbb K$ of the resolvent set of $2\omega_0\mathcal C(\omega_0)$, \emph{i.e.}, $\zeta \in \mathbb K\Subset\rho(2\omega_0\mathcal C(\omega_0))$. 
The following \cref{thm_cont_to_disc_resolvent_converge} shows that the resolvent $\mathcal R(z_{\delta,\zeta},\delta)$ is well defined for all sufficiently small $\delta$, uniformly in $\zeta\in\mathbb K$, and establishes that the rescaled resolvent $\delta\mathcal R(z_{\delta,\zeta},\delta)$ converges in the operator norm to the effective resolvent associated with the discrete capacitance operator as $\delta\to0$.


The proof of \cref{thm_cont_to_disc_resolvent_converge} adapts the strategy developed in \cite[Appendix C]{ammari2026resolvent_con} for the subwavelength case $\omega_0=0$. More precisely, one decomposes $H^1(\mathcal D)$ into the subspace spanned by the interior-resonant Neumann modes and its complement, estimates the resulting four operator blocks, and then eliminates the complementary block. When $\omega_0=0$, the relevant Neumann mode on each resonator is constant, which lies at the bottom of the local Neumann spectrum. Thus, the complementary block is coercive by the standard Poincar\'e inequality; see \cite[(C.4) and (C.6)]{ammari2026resolvent_con}. In the current non-subwavelength setting, however, $\omega_0^2/v_{\vect{j}}^2$ is an interior Neumann eigenvalue, so the complementary block is generally indefinite, and this coercivity argument is not applicable. We instead establish the uniform bounded invertibility of the complementary operator in \cref{lem:complementary-invertibility} by the spectral calculus.

We start by introducing some notation. We define the operator 
\begin{equation}
\label{eq_resonator_projection}
(Pu)(\vect{j},\ell) := \int_{D_{\vect{j}}} u\,\overline{u_{\vect{j},\ell}} \,\mathrm dx, \qquad (\vect{j},\ell)\in\mathcal I(\omega_0),
\end{equation}
extracting the coefficients of the interior-resonant Neumann eigenmodes on each resonator, and its adjoint 
$P^*:\ell^2(\mathcal I(\omega_0))\to L^2(\mathcal D)$ by 
\begin{equation*}
P^*a := \sum_{(\vect{j},\ell)\in\mathcal I(\omega_0)} a_{\vect{j},\ell}\, u_{\vect{j},\ell}\mathbbm{1}_{D_{\vect{j}}}, \qquad a=(a_{\vect{j},\ell})_{(\vect{j},\ell)\in\mathcal I(\omega_0)}.
\end{equation*}
We also introduce the associated $L^2(\mathcal D)$-orthogonal projection onto the closed subspace spanned by $\{u_{\vect{j},\ell}\}_{(\vect{j},\ell)\in\mathcal I(\omega_0)}$ and its complementary projection 
\begin{equation*}
\Pi:=P^*P, \qquad Q:=I-\Pi. 
\end{equation*}
Let $\mathcal Q:=QH^1(\mathcal D)$ be the complementary subspace. Noting that the interior-resonant Neumann modes have uniformly bounded $H^1$-norms, $\Pi$ restricts to a bounded projection on $H^1(\mathcal D)$, and we have
\begin{equation} \label{eq:decomposition}
H^1(\mathcal D) = P^*\ell^2(\mathcal I(\omega_0))\oplus\mathcal{Q}. 
\end{equation}
We also define $\mathcal A_\perp:\mathcal Q\to\mathcal Q^*$ by
\begin{equation} \label{eq:def-complementary-operator}
\left\langle\mathcal A_\perp q,r\right\rangle
:=\sum_{\vect{j}\in\Lambda} \int_{D_{\vect{j}}} \left( \nabla q\cdot\nabla\overline r -\frac{\omega_0^2}{v_{\vect{j}}^2}q\overline r \right)\,\mathrm dx, \qquad q,r\in\mathcal {Q}.
\end{equation}

\begin{lemma}\label{lem:complementary-invertibility}
Under the condition \eqref{gapassump1uniform}, the operator $\mathcal A_\perp:\mathcal Q\to\mathcal Q^*$ is boundedly invertible, that is, for some $C_\perp>0$, 
\begin{equation} \label{eq:complementary-inverse-bound}
\|\mathcal A_\perp^{-1}\|_{\mathcal B(\mathcal Q^*,\mathcal Q)} \leq C_\perp.
\end{equation}
\end{lemma}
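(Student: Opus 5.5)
The plan is to diagonalize $\mathcal A_\perp$ resonator by resonator using the Neumann spectral decomposition, and to obtain a uniform gap bound from \eqref{gapassump1uniform}. Since $H^1(\mathcal D)=\bigoplus_{\vect j}H^1(D_{\vect j})$ and both $\Pi$ and the form \eqref{eq:def-complementary-operator} act blockwise in $\vect j$, we have $\mathcal Q=\bigoplus_{\vect j}\mathcal Q_{\vect j}$ with $\mathcal Q_{\vect j}:=\{q\in H^1(D_{\vect j}): (q,u_{\vect j,\ell})_{L^2(D_{\vect j})}=0 \text{ for all } \ell\}$. We also have $\mathcal A_\perp=\bigoplus_{\vect j}\mathcal A_{\perp,\vect j}$, where $\mathcal A_{\perp,\vect j}$ is the form on $\mathcal Q_{\vect j}$. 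It therefore suffices to prove
\[
\|\mathcal A_{\perp,\vect j}^{-1}\|_{\mathcal B(\mathcal Q_{\vect j}^*,\mathcal Q_{\vect j})}\le C_\perp
\]
with $C_\perp$ independent of $\vect j$. Inactive sites are included here, with $m_{\vect j}=0$ and $\mathcal Q_{\vect j}=H^1(D_{\vect j})$. Because $(B_{\vect j},v_{\vect j})$ takes only four values and $D_{\vect j}$ is a translate of $B_{\vect j}$, only four distinct local operators occur, up to translation. The uniformity is therefore automatic once each one is invertible.

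For a fixed site, let $-\Delta_{\mathrm{Neu}}$ on $D_{\vect j}$ have eigenpairs $(\mu_k,\phi_k)$, with the $\phi_k$ forming an $L^2$-orthonormal basis. For $q\in H^1(D_{\vect j})$ with coefficients $q_k=(q,\phi_k)$, the $H^1$ norm is equivalent to $\sum_k(1+\mu_k)|q_k|^2$. Choosing the $\phi_k$ so that they include the basis $u_{\vect j,\ell}$ of $E_{\vect j}(\omega_0)$, the space $\mathcal Q_{\vect j}$ consists exactly of those $q$ with $q_k=0$ whenever $\mu_k=\omega_0^2/v_{\vect j}^2$. Let $\kappa=\omega_0^2/v_{\vect j}^2$. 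Then
\[
\langle\mathcal A_{\perp,\vect j}q,r\rangle=\sum_{\mu_k\ne\kappa}(\mu_k-\kappa)\,q_k\overline{r_k}.
\]

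Next, define $J$ as the sign flip $q_k\mapsto \operatorname{sgn}(\mu_k-\kappa)q_k$. This map is bounded and isometric on $\mathcal Q_{\vect j}$ in the spectral $H^1$ norm. We then get
\[
\langle\mathcal A_{\perp,\vect j}q,Jq\rangle=\sum_k|\mu_k-\kappa||q_k|^2\ge c\sum_k(1+\mu_k)|q_k|^2,
\]
where $c=\inf_k|\mu_k-\kappa|/(1+\mu_k)$. This infimum is positive for two reasons. First, $|\mu_k-\kappa|\ge g>0$ by \eqref{gapassump1uniform}. Second, $|\mu_k-\kappa|/(1+\mu_k)\to1$ as $\mu_k\to\infty$, so only finitely many $k$ need the gap bound. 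This is a T-coercivity (inf-sup) estimate. Together with the Hermitian symmetry of the form, Babuška–Lax–Milgram gives invertibility with $\|\mathcal A_{\perp,\vect j}^{-1}\|\le C/c$. Equivalently, and more directly, the inverse is the diagonal multiplier $r_k\mapsto r_k/(\mu_k-\kappa)$ between the spectral $H^{-1}$- and $H^1$-type sequence norms, and it has norm at most $\sup_k(1+\mu_k)/|\mu_k-\kappa|$.

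Finally, I would take the supremum over the four site types to obtain $C_\perp$, and sum orthogonally over $\vect j$ to get \eqref{eq:complementary-inverse-bound}. The main point needing care is the identification of $\mathcal Q^*$. Here $\mathcal Q$ is the range of $Q$ in $H^1$ and carries the $H^1$ norm, so $\mathcal Q^*$ is the dual of that space. One has to check that the equivalence between the $H^1$ norm and the spectral norm $\big(\sum(1+\mu_k)|q_k|^2\big)^{1/2}$ holds with constants uniform across the four site types. It does, since that equivalence is an identity: $\|\nabla q\|^2+\|q\|^2=\sum(1+\mu_k)|q_k|^2$.
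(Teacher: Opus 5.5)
Your proposal is correct and is essentially the paper's argument. The paper also diagonalizes via the spectral calculus of the direct sum $A$ of the Neumann Laplacians: it represents $\mathcal A_\perp$ through the isometry $(I+A)^{1/2}$ by the multiplier $(A-K)(I+A)^{-1}$ on $QL^2(\mathcal D)$, and bounds its inverse by $c_0^{-1}$ with $c_0=\inf|\mu-\kappa_{\vect j}|/(1+\mu)$, which is exactly your diagonal multiplier $r_k\mapsto r_k/(\mu_k-\kappa)$ written globally rather than site by site in an eigenbasis (your T-coercivity remark is an optional rephrasing of the same estimate).
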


\begin{proof}
Introduce
\begin{equation*}
A:=\bigoplus_{\vect{j}\in\Lambda} \bigl(-\Delta_{{\rm Neu}, D_{\vect{j}}}\bigr), \qquad
K:=\bigoplus_{\vect{j}\in\Lambda} \kappa_{\vect{j}}I_{D_{\vect{j}}}, \qquad
\kappa_{\vect{j}}:=\frac{\omega_0^2}{v_{\vect{j}}^2}\,,
\end{equation*}
where $-\Delta_{{\rm Neu}, D_{\vect{j}}}$ denotes the nonnegative Neumann Laplacian on $L^2(D_{\vect{j}})$ and $I_{D_{\vect{j}}}$ denotes the identity operator on this space. Then, $A$ is nonnegative and self-adjoint on $L^2(\mathcal D)$, with form domain $H^1(\mathcal D)$, whereas $K$ is bounded and self-adjoint. Moreover, $A$ and $K$ commute, and we have 
$\mathcal Q=H^1(\mathcal D)\cap\bigl(\ker(A-K)\bigr)^{\perp_{L^2}}$. Let $\mathcal H_\perp:=QL^2(\mathcal D)$ and define the isometry $$T:=(I+A)^{1/2}: \mathcal Q\to\mathcal H_\perp.$$ 
This allows us to represent $\mathcal A_\perp$ on $\mathcal H_\perp$ by the bounded self-adjoint operator:
\begin{equation} \label{def:Biso}
B:=(A-K)(I+A)^{-1}\big|_{\mathcal H_\perp},
\end{equation}
in the sense that $\left\langle\mathcal A_\perp q,r\right\rangle\ = \left\langle B\,Tq,Tr\right\rangle_{L^2(\mathcal D)}$ for all $q,r\in\mathcal Q$.
From \eqref{gapassump1uniform}, there exists $c_0 > 0$ such that for any $\mu\in\sigma(-\Delta_{\mathrm{Neu}};D_{\vect{j}})$ with $\mu\neq\kappa_{\vect{j}}$, 
\begin{equation*}
\frac{|\mu-\kappa_{\vect{j}}|}{1+\mu} \geq c_0 > 0.
\end{equation*}
By the representation \eqref{def:Biso} and the spectral calculus, we have $\|B^{-1}\|\leq c_0^{-1}$, which completes the proof. 
\end{proof}

\begin{theorem}
\label{thm_cont_to_disc_resolvent_converge}
Let $\mathbb K\Subset\rho(2\omega_0\mathcal C(\omega_0))$ be compact. Under \eqref{gapassump1uniform} and \eqref{gapassump2equiv}, there exists $\delta_{\mathbb K}>0$ such that $\mathcal A_\delta(\omega_0^2+\delta\zeta)$ is invertible for any $\zeta\in\mathbb K$ and $0<\delta<\delta_{\mathbb K}$. In this case, the resolvent $\mathcal R(\omega_0^2+\delta\zeta,\delta)$ is well defined, and there exists $C_{\mathbb K}>0$ such that
\begin{equation}
\label{eq_cont_to_disc_resolvent_converge}
\Bigl\| \delta\mathcal R(\omega_0^2+\delta\zeta,\delta) -P^*M^{-1/2}\bigl(2\omega_0\mathcal C(\omega_0)-\zeta\bigr)^{-1}M^{-1/2}P \Bigr\|_{\mathcal B(L^2(\mathcal D))} \leq C_{\mathbb K}\delta.
\end{equation}
\end{theorem}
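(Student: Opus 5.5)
The plan is a Schur complement (Feshbach) reduction of $\mathcal A_\delta(z_{\delta,\zeta})$, with $z_{\delta,\zeta}=\omega_0^2+\delta\zeta$, with respect to the splitting \eqref{eq:decomposition}. Write $u=P^*a+q$ with $a\in\ell^2(\mathcal I(\omega_0))$ and $q\in\mathcal Q$, and test against $w=P^*b+r$. This splits $\mathfrak a(\cdot,\cdot;z_{\delta,\zeta},\delta)$ into four blocks $\mathcal A_{11},\mathcal A_{12},\mathcal A_{21},\mathcal A_{22}$. Two facts make the blocks explicit. First, each $u_{\vect{j},\ell}$ satisfies \eqref{neumanneq}, so the interior volume part of the form vanishes on pairs (mode, anything) up to the spectral shift. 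Second, $\Pi$ and $Q$ are $L^2$- and Neumann-form-orthogonal, since $A$ and $K$ commute as in the proof of \cref{lem:complementary-invertibility}. Hence:
\begin{equation*}
\langle \mathcal A_{11}a,b\rangle = -\delta\zeta\,(M^{1}a,b)_{\ell^2}\cdot{}-\delta\,\langle \mathcal T^{z_{\delta,\zeta}}Ga,Gb\rangle,\qquad \mathcal A_{12},\mathcal A_{21}=-\delta\,\langle\mathcal T^{z_{\delta,\zeta}}\,\cdot,\cdot\rangle\ \text{(boundary coupling only)},
\end{equation*}
Here $\cdot{}$ denotes the weight coming from $\int \frac{z}{v_{\vect{j}}^2}|u|^2$, which gives exactly $\delta\zeta M$ on the mode block. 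Finally, $\mathcal A_{22}=\mathcal A_\perp-\delta\zeta\,(v_{\vect{j}}^{-2}\cdot,\cdot)-\delta\mathcal T^{z}$-part. The volume cross terms between $P^*a$ and $q$ vanish because $Q$ commutes with $K$.

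Next come the uniform bounds. By \cref{prop_d2n_map} and \eqref{gapassump2equiv}, $\mathcal T^z$ is analytic near $\omega_0^2$. So $\|\mathcal T^{z_{\delta,\zeta}}-\mathcal T^{\omega_0^2}\|\le C\delta$ uniformly in $\zeta\in\mathbb K$, and $\|\mathcal T^{z_{\delta,\zeta}}\|\le C$. Boundedness of the trace map and of $G$ then gives $\|\mathcal A_{12}\|,\|\mathcal A_{21}\|\le C\delta$. Also $\mathcal A_{22}=\mathcal A_\perp+\mathcal O(\delta)$. By \cref{lem:complementary-invertibility} and a Neumann series, $\mathcal A_{22}$ is invertible with $\|\mathcal A_{22}^{-1}\|\le 2C_\perp$ for small $\delta$, uniformly in $\zeta$. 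The Schur complement is
\begin{equation*}
S_\delta(\zeta)=\mathcal A_{11}-\mathcal A_{12}\mathcal A_{22}^{-1}\mathcal A_{21}
=-\delta\bigl(\zeta M+G^*\mathcal T^{\omega_0^2}G\bigr)+\mathcal O(\delta^2)
=\delta\,M^{1/2}\bigl(2\omega_0\mathcal C(\omega_0)-\zeta\bigr)M^{1/2}+\mathcal O(\delta^2),
\end{equation*}
where the last equality uses \eqref{DtNCapacitance}. Since $\mathbb K\Subset\rho(2\omega_0\mathcal C(\omega_0))$ and $\mathcal C(\omega_0)$ is self-adjoint (\cref{prop:cap-basic}), $\sup_{\zeta\in\mathbb K}\|(2\omega_0\mathcal C-\zeta)^{-1}\|<\infty$. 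Together with $M^{\pm1/2}$ being bounded, another Neumann series gives that $\delta^{-1}S_\delta(\zeta)$ is invertible, with
\begin{equation*}
\delta S_\delta(\zeta)^{-1}=M^{-1/2}(2\omega_0\mathcal C-\zeta)^{-1}M^{-1/2}+\mathcal O(\delta),
\end{equation*}
for $\delta<\delta_{\mathbb K}$. The block inversion formula then yields invertibility of $\mathcal A_\delta(z_{\delta,\zeta})$.

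The last step is to read off the resolvent. For $f\in L^2(\mathcal D)$, the right-hand side functional $w\mapsto(f,w)$ splits as $(Pf,b)+(Qf,r)$. By the Schur formula,
\begin{equation*}
\mathcal R f = P^*S^{-1}\bigl(Pf-\mathcal A_{12}\mathcal A_{22}^{-1}Qf\bigr)+\mathcal A_{22}^{-1}Qf-\mathcal A_{22}^{-1}\mathcal A_{21}S^{-1}(\cdots).
\end{equation*}
Multiplying by $\delta$, the leading term is $P^*\,\delta S^{-1}P$, which gives the claimed limit up to $\mathcal O(\delta)$. Every other term is $\mathcal O(\delta)$: the term $\delta\mathcal A_{22}^{-1}Q$ is trivially $\mathcal O(\delta)$, while terms such as $\delta S^{-1}\mathcal A_{12}\mathcal A_{22}^{-1}$ are $\mathcal O(1)\cdot\mathcal O(\delta)$, using $\|\delta S^{-1}\|=\mathcal O(1)$ and $\|\mathcal A_{12}\|=\mathcal O(\delta)$. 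The cross term $\mathcal A_{22}^{-1}\mathcal A_{21}\,\delta S^{-1}P$ is $\mathcal O(\delta)$ for the same reason. These bounds hold in $\mathcal B(L^2,H^1)$, hence in $\mathcal B(L^2(\mathcal D))$.

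The main obstacle is bookkeeping rather than analysis. One must verify exactly that the $\mathcal O(\delta)$ part of $\mathcal A_{11}$ equals $\delta M^{1/2}(2\omega_0\mathcal C-\zeta)M^{1/2}$. This requires matching the volume term $-\delta\zeta\sum_{\vect j} v_{\vect{j}}^{-2}\|u_{\vect{j},\ell}\|^2$, which gives $-\delta\zeta M$, against the DtN term $-\delta G^*\mathcal T^{\omega_0^2}G=2\omega_0\delta M^{1/2}\mathcal C M^{1/2}$, with the correct signs and conjugation conventions of the duality pairing. It also requires checking that the four blocks are bounded uniformly over the infinitely many resonators. That uniformity holds because only four resonator types occur, so the mode norms in $H^1$ and the trace constants are uniform.
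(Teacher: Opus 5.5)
Your proposal is correct and follows the paper's proof: the same splitting $H^1(\mathcal D)=P^*\ell^2(\mathcal I(\omega_0))\oplus\mathcal Q$, the same block estimates, and the same Schur-complement inversion in which every term other than $P^*\,\delta S_\delta(\zeta)^{-1}P$ is $\mathcal O(\delta)$ in $\mathcal B(L^2,H^1)$. The block estimates are: mode block $\delta M^{1/2}(2\omega_0\mathcal C(\omega_0)-\zeta)M^{1/2}+\mathcal O(\delta^2)$, coupling blocks $\mathcal O(\delta)$, and complementary block $\mathcal A_\perp+\mathcal O(\delta)$, inverted via the complementary-invertibility lemma and a Neumann series. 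One small point: the volume cross terms vanish most directly by Green's identity for the Neumann modes together with $Pq=0$, which is what your commutation remark amounts to.
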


\begin{remark}
Since $\mathcal C(\omega_0)$ is self-adjoint by \cref{prop:cap-basic}, any compact set $\mathbb K$ with $\mathbb K\cap\mathbb R=\emptyset$ is contained in $\rho(2\omega_0\mathcal C(\omega_0))$. Thus, the restriction on $\mathbb K$ in \cref{thm_cont_to_disc_resolvent_converge} includes the one in \cite[Theorem 2.1]{ammari2026resolvent_con} and additionally permits $\mathbb K$ to intersect the real axis. 
\end{remark}

\begin{proof}
Throughout, $C_{\mathbb K}$ denotes a positive constant depending only on $\mathbb K$, which may change line by line. 

\medskip 
\noindent 
\emph{Step 1: first-order expansion of the DtN map.} Since $\mathbb K$ is compact, we can restrict $\delta$ so that
\begin{equation*}
\frac{\delta}{v^2}\sup_{\zeta\in\mathbb K}|\zeta| < \frac{1}{2} \operatorname{dist}\Big( \frac{\omega_0^2}{v^2}, \sigma(-\Delta_{\mathrm{Dir}}; \mathbb R^d\setminus\overline{\mathcal D})\Big).  
\end{equation*} 
It follows that $z_{\delta,\zeta}/v^2$ remains in the resolvent set $\rho(-\Delta_{\mathrm{Dir}}; \mathbb R^d\setminus\overline{\mathcal D})$, uniformly for $\zeta\in\mathbb K$. By \cref{prop_d2n_map}, the map $z\mapsto\mathcal T^z$ is analytic on a neighborhood containing all such $z_{\delta,\zeta}$, and
\begin{equation}
\label{eq:dtn-first-order-bound}
\sup_{\zeta\in\mathbb K} \left\| \mathcal T^{z_{\delta,\zeta}} -\mathcal T^{\omega_0^2} \right\|_{\mathcal B(H^{1/2},H^{-1/2})} \leq C_{\mathbb K}\delta
\end{equation}
for sufficiently small $\delta$. 

\medskip 
\noindent 
\emph{Step 2: block decomposition.} We next express the operator pencil $\mathcal{A}_\delta$ in \eqref{eq_sesquilinear_form_subwavelength_resonance}  with
respect to the decomposition \eqref{eq:decomposition}. We consider $\mathcal X:=\ell^2(\mathcal I(\omega_0))\oplus\mathcal Q$ with its product norm, and define 
\begin{equation*}
J:\mathcal X\to H^1(\mathcal D), \qquad J(a,q):=P^*a+q.
\end{equation*}
Then $J$ is a bounded isomorphism with $J^{-1}u=(Pu,Qu)$. Let
$J^\#:H^1(\mathcal D)^*\to\mathcal X^*$ be the induced anti-dual
isomorphism defined by $\left\langle J^\#F,(b,r)\right\rangle :=\left\langle F,J(b,r)\right\rangle$. Identifying $\mathcal X^*$ canonically with $\ell^2(\mathcal I(\omega_0))\oplus\mathcal Q^*$, we define 
\begin{equation*}
\widehat{\mathcal A}_\delta(\zeta) := J^\#\mathcal A_\delta(z_{\delta,\zeta})J : \mathcal X\longrightarrow\mathcal X^*,
\end{equation*}
which has the block representation
\begin{equation*}
\widehat{\mathcal A}_\delta(\zeta) =
\begin{pmatrix}
\mathcal A_{PP}^{\delta,\zeta} & \mathcal A_{PQ}^{\delta,\zeta}\\
\mathcal A_{QP}^{\delta,\zeta} & \mathcal A_{QQ}^{\delta,\zeta}
\end{pmatrix},
\end{equation*}
where the four blocks are defined by, using the form \eqref{eq_sesquilinear_form_subwavelength_resonance}, 
\begin{equation*}
\begin{alignedat}{2}
\bigl(\mathcal A_{PP}^{\delta,\zeta}a,b\bigr)_{\ell^2(\mathcal I(\omega_0))} &:=\mathfrak a(P^*a,P^*b;z_{\delta,\zeta},\delta), \qquad&
\bigl(\mathcal A_{PQ}^{\delta,\zeta}q,b\bigr)_{\ell^2(\mathcal I(\omega_0))} &:=\mathfrak a(q,P^*b;z_{\delta,\zeta},\delta), \\
\left\langle\mathcal A_{QP}^{\delta,\zeta}a,r\right\rangle_{\mathcal Q^*,\mathcal Q} &:=\mathfrak a(P^*a,r;z_{\delta,\zeta},\delta), & \left\langle\mathcal A_{QQ}^{\delta,\zeta}q,r\right\rangle_{\mathcal Q^*,\mathcal Q} &:=\mathfrak a(q,r;z_{\delta,\zeta},\delta),
\end{alignedat}
\end{equation*}
where $a,b\in\ell^2(\mathcal I(\omega_0))$ and $q,r\in\mathcal Q$. Recalling \eqref{DtNCapacitance}, we define 
\begin{equation} \label{eq:def-Deff}
\mathcal D_{\mathrm{eff}}(\zeta) := M^{1/2} \bigl(2\omega_0\mathcal C(\omega_0)-\zeta\bigr) M^{1/2} = -G^*\mathcal T^{\omega_0^2}G-\zeta M,
\end{equation}
the second equality being \eqref{DtNCapacitance}. We claim that, uniformly for $\zeta\in\mathbb K$,
\begin{align}
\left\| \mathcal A_{PP}^{\delta,\zeta}-\delta\mathcal D_{\mathrm{eff}}(\zeta) \right\|_{\mathcal B(\ell^2)} &\leq C_{\mathbb K}\delta^2, \label{eq:block-estimate-PP}\\
\left\|\mathcal A_{PQ}^{\delta,\zeta}\right\|_{\mathcal B(\mathcal Q,\ell^2)} &\leq C_{\mathbb K}\delta,
\label{eq:block-estimate-PQ}\\
\left\|\mathcal A_{QP}^{\delta,\zeta}\right\|_{\mathcal B(\ell^2,\mathcal Q^*)} &\leq C_{\mathbb K}\delta,
\label{eq:block-estimate-QP}\\
\left\| \mathcal A_{QQ}^{\delta,\zeta}-\mathcal A_\perp \right\|_{\mathcal B(\mathcal Q,\mathcal Q^*)} &\leq C_{\mathbb K}\delta.\label{eq:block-estimate-QQ}
\end{align}

\medskip 
\noindent 
\emph{Step 3: proof of the block estimates \eqref{eq:block-estimate-PP}--\eqref{eq:block-estimate-QQ}.} Recalling  \eqref{neumanneq}, since $\partial_\nu u_{\vect{j},\ell}=0$ on $\partial D_{\vect{j}}$, Green's identity gives
\begin{equation}
\label{eq:green-neumann}
\int_{D_{\vect{j}}}\nabla u_{\vect{j},\ell}\cdot\nabla\overline w\,\mathrm dx = \frac{\omega_0^2}{v_{\vect{j}}^2}\int_{D_{\vect{j}}}u_{\vect{j},\ell}\overline w\,\mathrm dx, \qquad w\in H^1(D_{\vect{j}}).
\end{equation}
Applying \eqref{eq:green-neumann} with $w=P^*b$ and using the $L^2(D_{\vect{j}})$-orthonormality of $u_{\vect{j},\ell}$, we have 
\begin{equation*}
\mathfrak a(P^*a,P^*b;z_{\delta,\zeta},\delta) = -\delta\zeta\,(Ma,b)_{\ell^2} -\delta \left\langle \mathcal T^{z_{\delta,\zeta}}Ga,Gb \right\rangle .
\end{equation*}
Comparing this with \eqref{eq:def-Deff} gives 
\begin{equation*}
\mathcal A_{PP}^{\delta,\zeta}-\delta\mathcal D_{\mathrm{eff}}(\zeta) = -\delta\,G^*\bigl(\mathcal T^{z_{\delta,\zeta}}-\mathcal T^{\omega_0^2}\bigr)G,
\end{equation*}
and \eqref{eq:block-estimate-PP} follows from \eqref{eq:dtn-first-order-bound} together with the boundedness of $G$.

Next, let $q\in\mathcal Q$, so that $Pq=0$; that is, $q$ is $L^2(D_{\vect{j}})$-orthogonal to all $u_{\vect{j},\ell}$. Noting that $v_{\vect{j}}$ is constant on $D_{\vect{j}}$, the identity \eqref{eq:green-neumann} with $w=q$ shows  
\begin{align*}
\mathfrak a(P^*a,q;z_{\delta,\zeta},\delta) &= -\delta \left\langle \mathcal T^{z_{\delta,\zeta}}[Ga], q |_{\partial \mathcal{D}}\right\rangle,\\
\mathfrak a(q,P^*b;z_{\delta,\zeta},\delta) &= -\delta \left\langle \mathcal T^{z_{\delta,\zeta}} [q |_{\partial \mathcal{D}}],Gb \right\rangle.
\end{align*}
Moreover, the map $\mathcal T^{z_{\delta,\zeta}}$ is also uniformly bounded by Step~1. Then, \eqref{eq:block-estimate-PQ} and
\eqref{eq:block-estimate-QP} follow. Finally, comparing
\eqref{eq_sesquilinear_form_subwavelength_resonance} with
\eqref{eq:def-complementary-operator},
\begin{equation*}
\left\langle\bigl(\mathcal A_{QQ}^{\delta,\zeta}-\mathcal A_\perp\bigr)q,r\right\rangle = -\delta\zeta\sum_{\vect{j}\in\Lambda}\int_{D_{\vect{j}}}\frac{q\overline r}{v_{\vect{j}}^2}\,\mathrm dx -\delta\left\langle\mathcal T^{z_{\delta,\zeta}}[q|_{\partial \mathcal{D}}],  r|_{\partial \mathcal{D}}\right\rangle,
\end{equation*}
we can conclude \eqref{eq:block-estimate-QQ}.

\medskip 
\noindent 
\emph{Step 4: Schur complement and inversion.} By
\cref{lem:complementary-invertibility} and
\eqref{eq:block-estimate-QQ}, a Neumann-series argument shows that
$\mathcal A_{QQ}^{\delta,\zeta}$ is invertible for all sufficiently small $\delta$, uniformly in $\zeta\in\mathbb K$, and
\begin{equation}
\label{eq:QQ-inverse-uniform}
\sup_{\zeta\in\mathbb K} \left\| (\mathcal A_{QQ}^{\delta,\zeta})^{-1}
\right\|_{\mathcal B(\mathcal Q^*,\mathcal Q)} \leq 2C_\perp.
\end{equation}
Its Schur complement is therefore well defined and, by \eqref{eq:block-estimate-PP}--\eqref{eq:block-estimate-QP} and \eqref{eq:QQ-inverse-uniform}, 
\begin{align}
\mathcal S_\delta(\zeta) :={}& \mathcal A_{PP}^{\delta,\zeta}-\mathcal A_{PQ}^{\delta,\zeta}(\mathcal A_{QQ}^{\delta,\zeta})^{-1}\mathcal A_{QP}^{\delta,\zeta}\nonumber\\
={}& \delta\mathcal D_{\mathrm{eff}}(\zeta)+\mathcal O_{\mathcal B(\ell^2)}(\delta^2),
\label{eq:schur-first-order}
\end{align}
uniformly for $\zeta\in\mathbb K$. Since $\mathbb K\Subset\rho(2\omega_0\mathcal C(\omega_0))$ and $M$ is boundedly invertible, the operator
\begin{equation} \label{eq:Deffinverse}
\mathcal D_{\mathrm{eff}}(\zeta)^{-1}=M^{-1/2}(2\omega_0\mathcal C(\omega_0)-\zeta)^{-1}M^{-1/2}
\end{equation}
exists and is bounded uniformly on $\mathbb K$. Again, by Neumann-series, we have
\begin{equation}
\label{eq:schur-inverse}
\mathcal S_\delta(\zeta)^{-1}=\delta^{-1}\mathcal D_{\mathrm{eff}}(\zeta)^{-1}+\mathcal O_{\mathcal B(\ell^2)}(1).
\end{equation}
It follows that 
\begin{equation}
\label{eq:transformed-block-inverse}
\widehat{\mathcal A}_\delta(\zeta)^{-1}=
\begin{pmatrix}
\delta^{-1}\mathcal D_{\mathrm{eff}}(\zeta)^{-1}+\mathcal O_{\mathcal B(\ell^2)}(1)
& \mathcal O_{\mathcal B(\mathcal Q^*,\ell^2)}(1)\\
\mathcal O_{\mathcal B(\ell^2,\mathcal Q)}(1)
& \mathcal O_{\mathcal B(\mathcal Q^*,\mathcal Q)}(1)
\end{pmatrix},
\end{equation}
similarly to \cite[Appendix C.2, (C.13), (C.15)--(C.17)]{ammari2026resolvent_con}.

\medskip 
\noindent 
\emph{Step 5: conclusion.} Since $J$ and $J^\#$ are isomorphisms, \eqref{eq:transformed-block-inverse} implies that $\mathcal A_\delta(z_{\delta,\zeta})$ is invertible. Multiplying \eqref{eq:transformed-block-inverse} by $\delta$ therefore gives
\begin{equation*}
\delta\mathcal R(z_{\delta,\zeta},\delta)=P^*\mathcal D_{\mathrm{eff}}(\zeta)^{-1}P+\mathcal O_{\mathcal B(L^2(\mathcal D),H^1(\mathcal D))}(\delta),
\end{equation*}
uniformly for $\zeta\in\mathbb K$. Substituting \eqref{eq:Deffinverse} and using the continuous embedding $H^1(\mathcal D)\hookrightarrow L^2(\mathcal D)$ proves \eqref{eq_cont_to_disc_resolvent_converge}. We complete the proof. 
\end{proof}

\subsection{Characterizations of defect eigenvalues and bands} \label{sec:characterizations}

In the regular regime, where the interior-resonance conditions \eqref{gapassump1} and \eqref{gapassump1uniform}, together with the exterior gap condition \eqref{gapassump2equiv}, hold, \cref{thm_cont_to_disc_resolvent_converge} shows that the spectral problem \eqref{eq:scattering-problem} near $z=\omega_0^2$ is governed, to leading order in $\delta$, by the frequency-dependent capacitance operator $\mathcal C(\omega_0)$. We now derive, as corollaries of this reduction, 
the spectral asymptotics for compact and line defects. For a compact defect, that is, when $\mathcal J_{\mathrm{def}}$ is finite, each eigenvalue $\lambda$ of $\mathcal C(\omega_0)$ gives rise to a cluster of defect eigenfrequencies, bifurcating from $\omega_0$ with common leading-order shift $\delta\lambda$; see \cref{cor:compact-asymptotics}. For a straight periodic line defect, the quasi-periodic capacitance matrices \eqref{eq:def-freq-cap-operator-Line} play the same role fiberwise and determine the leading-order dispersion of the guided defect bands bifurcating from $\omega_0$; see \cref{cor:line-asymptotics}. These results extend those of \cite{ammariSubwavelengthGuidedModes2021,erik2019} to the non-subwavelength regime.

To make it clear that these defect frequencies and bands lie in a spectral gap of the unperturbed crystal, we first establish the existence of a bulk gap. Let $\mathcal D_{\mathrm{bulk}}:=\bigcup_{\vect{j}\in\Lambda}(D+\vect{j})$ and $\Omega_{\mathrm{bulk}}:=\mathbb R^d\setminus\overline{\mathcal D_{\mathrm{bulk}}}$ be the configuration and its complement for the unperturbed medium. Similarly to the perturbed case \eqref{def:perturbedop}, define the coefficients
\begin{equation*}
a_{\delta,\mathrm{bulk}}:=\delta\mathbbm{1}_{\Omega_{\mathrm{bulk}}}+\mathbbm{1}_{\mathcal D_{\mathrm{bulk}}},
\qquad
m_{\delta,\mathrm{bulk}}:=\frac{\delta}{v^2}\mathbbm{1}_{\Omega_{\mathrm{bulk}}}+\frac{1}{v_b^2}\mathbbm{1}_{\mathcal D_{\mathrm{bulk}}},
\end{equation*}
and let $\mathcal L_\delta^{\mathrm{bulk}}:=-m_{\delta,\mathrm{bulk}}^{-1}\nabla\cdot(a_{\delta,\mathrm{bulk}}\nabla)$ be the elliptic operator of the periodic unperturbed crystal in $L^2(\mathbb R^d,m_{\delta,\mathrm{bulk}}\,\mathrm dx)$. Let $-\Delta_{\mathrm{Dir}}$ be the Dirichlet Laplacian in $\Omega_{\mathrm{bulk}}$, and let $-\Delta_{\vect{\alpha},\mathrm{Dir}}$, $\vect{\alpha}\in Y^*$, denote its Floquet fibers on $Y\setminus\overline D$, with Dirichlet conditions on $\partial D$ and $\vect{\alpha}$-quasi-periodic conditions on $\partial Y$. Thus, the Floquet--Bloch decomposition gives
\begin{equation*}
\sigma(-\Delta_{\mathrm{Dir}};\Omega_{\mathrm{bulk}})
=\overline{\bigcup_{\vect{\alpha}\in Y^*}
\sigma(-\Delta_{\vect{\alpha},\mathrm{Dir}};Y\setminus\overline D)},
\end{equation*}
implying that the bulk Laplacian 
$-\Delta_{\mathrm{Dir}}$ has a purely essential spectrum:
\begin{equation}
\label{eq:bulk-purely-essential}
\sigma(-\Delta_{\mathrm{Dir}};\Omega_{\mathrm{bulk}})=\sigma_{\mathrm{ess}}(-\Delta_{\mathrm{Dir}};\Omega_{\mathrm{bulk}}).
\end{equation}

\begin{lemma}\label{lem:bulk-exterior-spectrum}
For a compact defect, denote the exterior domain by
$\Omega_{\mathrm{comp}}:=\mathbb R^d\setminus\overline{\mathcal D}$.
Then
\begin{equation}
\label{eq:exterior-compact-essential-spectrum}
\sigma_{\mathrm{ess}}(-\Delta_{\mathrm{Dir}};\Omega_{\mathrm{comp}})
=\sigma_{\mathrm{ess}}(-\Delta_{\mathrm{Dir}};\Omega_{\mathrm{bulk}})
=\sigma(-\Delta_{\mathrm{Dir}};\Omega_{\mathrm{bulk}}).
\end{equation}
Suppose that $d=2$ and that the line defect is of the form \eqref{eq:defectindex}. Denote the corresponding exterior domain by $\Omega_{\mathrm{line}}:=\mathbb R^2\setminus\overline{\mathcal D}$, so that $\widetilde\Omega_{\mathrm{line}}=\widetilde Y\cap\Omega_{\mathrm{line}}$, and let $\widetilde\Omega_{\mathrm{bulk}}:=\widetilde Y\cap\Omega_{\mathrm{bulk}}$. 
For every $\alpha\in\widetilde Y^* \simeq [0,2\pi)$, let $-\Delta_{\alpha,\mathrm{Dir}}$ denote the Dirichlet Laplacian with $\alpha$-quasi-periodic boundary conditions on its lateral boundaries. Then
\begin{equation}
\label{eq:exterior-line-fiber-essential-spectrum}
\sigma_{\mathrm{ess}}(-\Delta_{\alpha,\mathrm{Dir}};\widetilde\Omega_{\mathrm{line}}) =\sigma_{\mathrm{ess}}(-\Delta_{\alpha,\mathrm{Dir}};\widetilde\Omega_{\mathrm{bulk}})
=\sigma(-\Delta_{\alpha,\mathrm{Dir}};\widetilde\Omega_{\mathrm{bulk}}).
\end{equation}
Therefore, 
\begin{equation}
\label{eq:bulk-exterior-spectrum-inclusion}
\begin{aligned}
\sigma(-\Delta_{\mathrm{Dir}};\Omega_{\mathrm{bulk}})
&\subset\sigma(-\Delta_{\mathrm{Dir}};\Omega_{\mathrm{comp}})
&&\text{for a compact defect},\\
\sigma(-\Delta_{\mathrm{Dir}};\Omega_{\mathrm{bulk}})
&\subset\sigma(-\Delta_{\mathrm{Dir}};\Omega_{\mathrm{line}})
&&\text{for a straight line defect}.
\end{aligned}
\end{equation}
\end{lemma}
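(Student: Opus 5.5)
The plan is to show that modifying the geometry on a bounded region (compact defect), or on a region bounded in the transverse direction after Floquet reduction (line defect), changes the Dirichlet Laplacian only by a compact resolvent perturbation, and then to apply Weyl's theorem. The second equality in each of \eqref{eq:exterior-compact-essential-spectrum} and \eqref{eq:exterior-line-fiber-essential-spectrum} is the statement that the reference operator has purely essential spectrum. For $\Omega_{\mathrm{bulk}}$ this is \eqref{eq:bulk-purely-essential}. For the fiber on the strip $\widetilde\Omega_{\mathrm{bulk}}$, which is periodic in the $e_2$ direction, a further Floquet--Bloch decomposition in $e_2$ writes $\sigma(-\Delta_{\alpha,\mathrm{Dir}};\widetilde\Omega_{\mathrm{bulk}})$ as a union of band functions over a one-dimensional quasi-momentum. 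A periodic operator has no isolated eigenvalues of finite multiplicity: any eigenvalue is an infinitely degenerate flat band obtained from lattice translates of a compactly supported eigenfunction. Hence this spectrum is also purely essential.

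For the first equality I will use a gluing argument. Choose a bounded open set $B$, or for the line fiber a set $B=\widetilde Y\cap\{|x_2|<R\}$, that contains all defected resonators together with a neighbourhood of them. Outside $B$ the two domains coincide. Pick smooth cut-offs $\chi_1+\chi_2=1$ with $\chi_1$ supported in $B$ and $\chi_2$ vanishing near the defects.

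Next I compare resolvents at a point $\mu<0$, where both operators are invertible. For $f\in L^2$, set
\[
W f:=\chi_2 R_{\mathrm{bulk}}(\mu)(\widetilde\chi_2 f)+\chi_1 R_{B'}(\mu)(\widetilde\chi_1 f),
\]
where $R_{B'}$ is the Dirichlet resolvent of a bounded domain $B'\supset B$ (intersected with the exterior domain), and the $\widetilde\chi_i$ are slightly larger cut-offs. Then $(-\Delta-\mu)Wf=f+Kf$. Here $K$ collects the commutator terms $[\Delta,\chi_i]$ composed with resolvents. These terms map $L^2$ into $H^1$ functions supported in a bounded set, or in a set bounded in $x_2$ within the strip, which is therefore also bounded because the strip has bounded width in $x_1$. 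By Rellich's theorem $K$ is compact. The $R_{B'}$ term is compact for the same reason. It follows that $R_{\mathrm{comp}}(\mu)-R_{\mathrm{bulk}}(\mu)$ is compact.

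The delicate point is that the Dirichlet conditions differ inside $B$. I handle this by making all cut-offs equal to constants near every resonator boundary, so that each glued function lies in the correct form domain. This works because the resonators are compactly contained in their cells. Weyl's theorem for resolvent differences, applied via the spectral mapping $\lambda\mapsto(\lambda-\mu)^{-1}$, then gives equality of the essential spectra. The argument for each fixed $\alpha$ is identical, since quasi-periodic conditions on $\Gamma_\pm$ are untouched by cut-offs depending only on $x_2$.

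Finally, \eqref{eq:bulk-exterior-spectrum-inclusion} follows directly in the compact case:
\[
\sigma(-\Delta_{\mathrm{Dir}};\Omega_{\mathrm{bulk}})=\sigma_{\mathrm{ess}}(-\Delta_{\mathrm{Dir}};\Omega_{\mathrm{comp}})\subset\sigma(-\Delta_{\mathrm{Dir}};\Omega_{\mathrm{comp}}).
\]
In the line case, the $e_1$-periodicity of $\Omega_{\mathrm{line}}$ gives $\sigma(-\Delta_{\mathrm{Dir}};\Omega_{\mathrm{line}})\supset\bigcup_\alpha\sigma(-\Delta_{\alpha,\mathrm{Dir}};\widetilde\Omega_{\mathrm{line}})$. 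Together with \eqref{eq:exterior-line-fiber-essential-spectrum} and the decomposition $\sigma(-\Delta_{\mathrm{Dir}};\Omega_{\mathrm{bulk}})=\overline{\bigcup_\alpha\sigma(-\Delta_{\alpha,\mathrm{Dir}};\widetilde\Omega_{\mathrm{bulk}})}$, which is a partial Floquet reduction in $e_1$ only, this yields the inclusion after taking closures.

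The main obstacle is the compactness of the resolvent difference when the Dirichlet boundaries differ inside $B$. The plan is to rely on the resolvent gluing above rather than on form-domain comparisons.
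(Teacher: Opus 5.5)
Your proposal is correct and follows the same route as the paper: stability of the essential spectrum under a geometric change confined to a bounded region (fiberwise, a transversally bounded part of the strip), the purely essential spectrum of the periodic reference operator, and a union over $\alpha$ for the line inclusion. The only difference is that you prove the stability step by hand via resolvent gluing and Weyl's theorem, where the paper cites a decomposition principle; two small tidy-ups are that for a shape defect the resolvent difference should be formed for zero extensions to $L^2(\mathbb R^d)$ (since $L^2(\Omega_{\mathrm{comp}})$ and $L^2(\Omega_{\mathrm{bulk}})$ differ), and that infinite multiplicity of periodic eigenvalues follows from translation invariance alone, without any claim about compactly supported eigenfunctions.
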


\begin{proof}
For the compact defect, by definition, the domains $\Omega_{\mathrm{comp}}$ and $\Omega_{\mathrm{bulk}}$ coincide outside a compact set. Then, the associated essential spectrum of $-\Delta_{\mathrm{Dir}}$ is stable, by \cite[Theorem~2.3]{lenzDecompositionPrinciple2019} (see also \cite{figotinLocalizedClassicalWaves1997}), which, together with \eqref{eq:bulk-purely-essential}, proves \eqref{eq:exterior-compact-essential-spectrum} and the first inclusion in \eqref{eq:bulk-exterior-spectrum-inclusion}. 

For the line defect, the partial Floquet transform along $e_1$ reduces the problem, for each $\alpha\in\widetilde Y^*$, to the Dirichlet Laplacians on $\widetilde\Omega_{\mathrm{line}}$ and $\widetilde\Omega_{\mathrm{bulk}}$. These strip geometries coincide outside a bounded region in the transverse direction. Similarly, since the essential spectrum is stable under compact perturbations, we have $\sigma_{\mathrm{ess}}(-\Delta_{\alpha,\mathrm{Dir}};\widetilde\Omega_{\mathrm{line}}) =\sigma_{\mathrm{ess}}(-\Delta_{\alpha,\mathrm{Dir}};\widetilde\Omega_{\mathrm{bulk}})$; see \cite[Section~3.1]{ammariSubwavelengthGuidedModes2021}. Then, \eqref{eq:exterior-line-fiber-essential-spectrum} follows by the transverse periodicity of $\widetilde\Omega_{\mathrm{bulk}}$. Taking the unions over $\alpha$ gives the second inclusion in \eqref{eq:bulk-exterior-spectrum-inclusion}, completing the proof. 
\end{proof}

It follows from \cref{lem:bulk-exterior-spectrum} and \eqref{gapassump2equiv} that, for the compact and straight-line defects considered above, the following exterior bulk gap condition holds:
\begin{equation} \label{eq:bulk-exterior-gap}
d_{\mathrm{ext}}^{\mathrm{bulk}}:=\operatorname{dist}\left(\frac{\omega_0^2}{v^2},\sigma(-\Delta_{\mathrm{Dir}};\Omega_{\mathrm{bulk}})\right)>0.
\end{equation}
For a general noncompact defect, \eqref{eq:bulk-exterior-gap} should instead be assumed separately. We next show that this condition opens a bulk gap of fixed width around $\omega_0^2$ for all sufficiently small contrasts.

\begin{lemma} \label{lem:uniform-bulk-gap}
Suppose that
\begin{equation*}
\frac{\omega_0^2}{v_b^2}\notin\sigma(-\Delta_{\mathrm{Neu}};D)
\end{equation*}
and that \eqref{eq:bulk-exterior-gap} holds. Then there exist $\eta_{\mathrm b}>0$ and $\delta_{\mathrm b}>0$ such that
\begin{equation}
\label{eq:uniform-bulk-gap}
I_{\mathrm b}:=(\omega_0^2 - \eta_{\mathrm b},\omega_0^2 + \eta_{\mathrm b})\subset\rho(\mathcal L_\delta^{\mathrm{bulk}})
\end{equation}
for every $0<\delta<\delta_{\mathrm b}$.
\end{lemma}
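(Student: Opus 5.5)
We argue by contradiction, reducing the statement to the pencil formulation of \Cref{subsec:NtD} applied to the unperturbed bulk crystal. Since $\mathcal L_\delta^{\mathrm{bulk}}$ is self-adjoint, $z\in\sigma(\mathcal L_\delta^{\mathrm{bulk}})$ real means there are Weyl sequences; more conveniently, we show directly that the resolvent of $\mathcal L_\delta^{\mathrm{bulk}}$ exists for all real $z$ with $|z-\omega_0^2|<\eta_{\mathrm b}$. First choose $\eta_{\mathrm b}$ so that, for such $z$, $z/v^2$ stays at distance at least $d_{\mathrm{ext}}^{\mathrm{bulk}}/2$ from $\sigma(-\Delta_{\mathrm{Dir}};\Omega_{\mathrm{bulk}})$, and $z/v_b^2$ stays at distance at least $c_0/2>0$ from $\sigma(-\Delta_{\mathrm{Neu}};D)$. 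This is possible by \eqref{eq:bulk-exterior-gap} and the hypothesis $\omega_0^2/v_b^2\notin\sigma(-\Delta_{\mathrm{Neu}};D)$, using discreteness of the Neumann spectrum. Then \cref{prop_d2n_map} gives a DtN map $\mathcal T^z_{\mathrm{bulk}}$ that is analytic and uniformly bounded, $\|\mathcal T^z_{\mathrm{bulk}}\|\le C$, on a complex neighbourhood of $\overline{I_{\mathrm b}}$.

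Next consider the bulk pencil $\mathcal A^{\mathrm{bulk}}_\delta(z)=\mathcal A^{\mathrm{bulk}}_0(z)-\delta\,\gamma^*\mathcal T^z_{\mathrm{bulk}}\gamma$, where $\mathcal A^{\mathrm{bulk}}_0(z)$ is the block-diagonal Neumann form $\int\nabla u\cdot\nabla\overline w-\frac{z}{v_b^2}u\overline w$. Exactly as in the proof of \cref{lem:complementary-invertibility}, but now with no resonant subspace removed, spectral calculus gives $\|\mathcal A^{\mathrm{bulk}}_0(z)^{-1}\|_{\mathcal B(H^1(\mathcal D_{\mathrm{bulk}})^*,H^1(\mathcal D_{\mathrm{bulk}}))}\le C'$ uniformly for $z\in I_{\mathrm b}$. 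Indeed, $|\mu-z/v_b^2|/(1+\mu)$ is bounded below uniformly over $\mu\in\sigma(-\Delta_{\mathrm{Neu}};D)$. A Neumann series then shows that $\mathcal A^{\mathrm{bulk}}_\delta(z)$ is invertible for $\delta<\delta_{\mathrm b}:=(2CC'\|\gamma\|^2)^{-1}$, uniformly in $z\in I_{\mathrm b}$.

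It remains to transfer invertibility of the pencil to $z\in\rho(\mathcal L^{\mathrm{bulk}}_\delta)$. The equivalence \eqref{eq_variational_characterization_subwavelength_resonance} only controls point spectrum, while the bulk spectrum is essential, so we instead build the resolvent explicitly. For $f\in L^2(\mathbb R^d)$, solve $(\mathcal L^{\mathrm{bulk}}_\delta-z)u=f$ in two steps:
\begin{itemize}
\item[(i)] Solve the exterior inhomogeneous Dirichlet problem $u_{\mathrm{ext}}^0=(-\Delta_{\mathrm{Dir}}-z/v^2)^{-1}(\cdot)$ with the data $f|_{\Omega_{\mathrm{bulk}}}$, suitably scaled by $v^2/\delta$ from the equation $-\delta\Delta u-\delta z v^{-2}u=\delta v^{-2}f$. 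This is bounded thanks to the exterior gap.
\item[(ii)] Write $u=u_{\mathrm{ext}}^0+U$ outside, where $U$ is the exterior Helmholtz extension of the trace of $u$. Eliminating $U$ through $\mathcal T^z_{\mathrm{bulk}}$ gives an interior problem $\mathcal A^{\mathrm{bulk}}_\delta(z)u=F$. Its right-hand side $F$ consists of $v_b^{-2}f|_{\mathcal D}$ plus $\delta$ times the conormal derivative of $u^0_{\mathrm{ext}}$ on the boundary, which lies in $H^1(\mathcal D_{\mathrm{bulk}})^*$ by the weak Green formula.
\end{itemize}
Inverting the pencil and gluing the two pieces produces $u\in H^1(\mathbb R^d)$ solving the weak equation \eqref{def:formLdelta}, with norm bounded in terms of $\|f\|$ for fixed $\delta$. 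Hence $\mathcal L^{\mathrm{bulk}}_\delta-z$ is surjective, and by self-adjointness and $z$ real also injective, so $z\in\rho(\mathcal L^{\mathrm{bulk}}_\delta)$.

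The main obstacle is step (ii): checking that the gluing gives a bounded inverse on the weighted space, that is, verifying the weak formulation across $\partial\mathcal D_{\mathrm{bulk}}$ including the transmission condition. An alternative is to note that, if $z\in\sigma(\mathcal L^{\mathrm{bulk}}_\delta)$, then a Floquet fiber has an eigenvalue $z$, since each fiber has compact resolvent and discrete spectrum. The fiberwise equivalence \eqref{eq_variational_characterization_subwavelength_resonance}, with the quasi-periodic DtN map bounded uniformly in $\vect\alpha$ via \eqref{gapassump2}, then contradicts the invertibility of the fiber pencils. These are obtained by the same Neumann series uniformly in $\vect\alpha$. We would use this Floquet route, as it avoids the gluing.
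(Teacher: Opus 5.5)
Your final Floquet route is correct and is essentially the paper's proof: you choose $\eta_{\mathrm b}$ so that $z/v_b^2$ stays uniformly away from the Neumann spectrum of $D$ and $z/v^2$ stays away from the fiber Dirichlet spectra uniformly in $\vect{\alpha}$, you invert the fiber pencils $\mathcal A_{\mathrm{int}}(z)-\delta\mathcal B^{\vect{\alpha}}(z)$ on $H^1(D)$ by a Neumann series uniformly in $\vect{\alpha}$ and $z$, and you conclude via the Floquet--Bloch decomposition. Your explicit justification of the last step spells out what the paper leaves implicit in ``recalling the Floquet--Bloch decomposition'': fibers have compact resolvent, so their spectra consist of eigenvalues, which the fiberwise kernel characterization excludes, and an open interval missed by every fiber spectrum also misses the closure of their union. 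The whole-space gluing detour is not needed.
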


\begin{proof}
We let $d_{\mathrm{int}}^{\mathrm{bulk}}:=\operatorname{dist}\left(\omega_0^2/v_b^2,\sigma(-\Delta_{\mathrm{Neu}};D)\right)>0$, and then choose $\eta_{\mathrm b}>0$ sufficiently small such that for $|z-\omega_0^2|\leq \eta_{\mathrm b}$,
\begin{equation}
\label{eq:uniform-bulk-separation}
\begin{aligned}
\operatorname{dist}\left(\frac{z}{v_b^2},\sigma(-\Delta_{\mathrm{Neu}};D)\right)&\geq\frac{d_{\mathrm{int}}^{\mathrm{bulk}}}{2},\\
\inf_{\vect{\alpha}\in Y^*}\operatorname{dist}\left(\frac{z}{v^2},\sigma \big(-\Delta_{\vect{\alpha},\mathrm{Dir}};Y\setminus\overline D \big)\right)&\geq\frac{d_{\mathrm{ext}}^{\mathrm{bulk}}}{2}.
\end{aligned}
\end{equation}
For $\vect{\alpha}\in Y^*$, let $\mathcal T_{\mathrm{bulk}}^{\vect{\alpha},z}$ denote the corresponding quasi-periodic exterior DtN map on $\partial D$. Similarly to \eqref{eq_variational_characterization_subwavelength_resonance}, we can define the operator pencil
\begin{equation*}
\mathcal A_{\delta,\mathrm{bulk}}^{\vect{\alpha}}(z)=\mathcal A_{\mathrm{int}}(z)-\delta\mathcal B^{\vect{\alpha}}(z):H^1(D)\longrightarrow H^1(D)^*,
\end{equation*}
such that its characteristic values give the bulk eigenvalues, where
\begin{align*}
\left\langle\mathcal A_{\mathrm{int}}(z)u,w\right\rangle &:=\int_D\left(\nabla u\cdot\nabla\overline w-\frac{z}{v_b^2}u\overline w\right)\,\mathrm dx,\\
\left\langle\mathcal B^{\vect{\alpha}}(z)u,w\right\rangle &:=\left\langle\mathcal T_{\mathrm{bulk}}^{\vect{\alpha},z}[u|_{\partial D}],w|_{\partial D}\right\rangle_{H^{-1/2}(\partial D),H^{1/2}(\partial D)}.
\end{align*}
The first estimate in \eqref{eq:uniform-bulk-separation} gives a uniform bound for $\mathcal A_{\mathrm{int}}(z)^{-1}$ from $H^1(D)^*$ to $H^1(D)$. The second estimate, the compactness of $Y^*$, and an analogous version of \cref{prop_d2n_map} for $\mathcal T_{\mathrm{bulk}}^{\vect{\alpha},z}$ give a uniform bound for $\mathcal B^{\vect{\alpha}}(z)$ from $H^1(D)$ to $H^1(D)^*$. It follows that 
\begin{equation*}
\mathcal A_{\delta,\mathrm{bulk}}^{\vect{\alpha}}(z)=\mathcal A_{\mathrm{int}}(z)\left(I-\delta\mathcal A_{\mathrm{int}}(z)^{-1}\mathcal B^{\vect{\alpha}}(z)\right)
\end{equation*}
is uniformly invertible in $\vect{\alpha}$ and $z$ for all sufficiently small $\delta$. Recalling the Floquet--Bloch decomposition, we have proved \eqref{eq:uniform-bulk-gap}.
\end{proof}

\subsubsection{Compact defects}
\label{sec:compact_defect}

In the case of compact defects where $\mathcal J_{\mathrm{def}}$ is finite, we have that  $\mathcal I(\omega_0)$ is finite and $\mathcal C(\omega_0)$ is a finite-dimensional Hermitian matrix, and the following holds.

\begin{corollary} \label{cor:compact-asymptotics}
Let $\omega_0 >0 $ satisfy \eqref{gapassump1}, \eqref{gapassump1uniform}, and \eqref{gapassump2equiv}. 
Let $\lambda$ be an eigenvalue of $\mathcal C(\omega_0)$ of multiplicity $m$. For all sufficiently small $\delta>0$, there are exactly $m$ defect eigenfrequencies, counted with multiplicity, that bifurcate from $\omega_0$ with leading-order correction $\lambda$. More precisely, we have 
\begin{equation} \label{asymp:compact}
\omega_{\delta,k} = \omega_0+\delta\lambda+\mathcal O(\delta^2), \qquad k=1,\dots,m.
\end{equation}
Moreover, for all sufficiently small $\delta$, 
\begin{equation} \label{eq:ingaocond}
    \omega_{\delta,k}^2 \in I_{\mathrm b} \subset\rho(\mathcal L_\delta^{\mathrm{bulk}})\,,
\end{equation}
where $I_{\mathrm b}$ is given in \eqref{eq:uniform-bulk-gap}.  Thus, these defect eigenfrequencies lie in a spectral gap of the unperturbed bulk crystal.
\end{corollary}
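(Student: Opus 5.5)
The plan is to derive the eigenvalue asymptotics from the norm-resolvent estimate of \cref{thm_cont_to_disc_resolvent_converge} by a Riesz-projection (contour integral) argument in the rescaled variable $\zeta$. Since $\mathcal J_{\mathrm{def}}$ is finite, $\mathcal C(\omega_0)$ is a finite Hermitian matrix. Let $\mu=2\omega_0\lambda$. I would choose a small circle $\Gamma_r=\{|\zeta-\mu|=r\}$ so that no other eigenvalue of $2\omega_0\mathcal C(\omega_0)$ lies in the closed disc. Then $\Gamma_r$ is a compact subset of $\rho(2\omega_0\mathcal C(\omega_0))$, and \cref{thm_cont_to_disc_resolvent_converge} with $\mathbb K=\Gamma_r$ gives invertibility of $\mathcal A_\delta(\omega_0^2+\delta\zeta)$ on $\Gamma_r$ for $\delta<\delta_{\mathbb K}$, together with the $\mathcal O(\delta)$ resolvent estimate.

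\emph{Step 1 (counting).} Characteristic values of the analytic pencil $z\mapsto\mathcal A_\delta(z)$ correspond to eigenvalues of $\mathcal L_\delta$ by \eqref{eq_variational_characterization_subwavelength_resonance}. Since $\mathcal A_\delta$ is Hermitian for real $z$, these are real and semisimple. I would count them inside $\omega_0^2+\delta\Gamma_r$ via the rank of $\frac{1}{2\pi\mathrm i}\oint_{\Gamma_r}\delta\mathcal R(\omega_0^2+\delta\zeta,\delta)\,\mathrm d\zeta$, an operator of the form $P_\delta M$-weighted by the appropriate mass. The limiting integral is $P^*M^{-1/2}\Pi_\mu M^{-1/2}P$, where $\Pi_\mu$ is the spectral projection of $2\omega_0\mathcal C(\omega_0)$ at $\mu$, which has rank $m$. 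The contour integrals differ by $\mathcal O(r\delta)$, so for small $\delta$ the ranks agree and there are exactly $m$ eigenvalues $z_{\delta,k}=\omega_0^2+\delta\zeta_{\delta,k}$ with $|\zeta_{\delta,k}-\mu|<r$.

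\emph{Step 2 (rate).} To improve from $o(1)$ to $\mathcal O(\delta)$, I would repeat the argument with $r$ replaced by $K\delta$. A direct use of the theorem with a shrinking contour is delicate, since $C_{\mathbb K}$ depends on $\mathbb K$. Instead I would use the Schur complement of Step~4 of that proof. Here $z_{\delta,\zeta}$ is a characteristic value iff $\mathcal S_\delta(\zeta)$ is singular, and $\mathcal S_\delta(\zeta)=\delta(\mathcal D_{\mathrm{eff}}(\zeta)+E_\delta(\zeta))$ with $\|E_\delta\|\le C\delta$ uniformly for $\zeta$ in a fixed disc. The matrix $M^{-1/2}\mathcal S_\delta(\zeta)M^{-1/2}/\delta$ is Hermitian for real $\zeta$ and equals $2\omega_0\mathcal C(\omega_0)-\zeta+\mathcal O(\delta)$. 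Applying Weyl's inequality, or a Rouché/Gohberg--Sigal argument for the finite-dimensional analytic family, then gives $|\zeta_{\delta,k}-\mu|\le C\delta$. Uniform invertibility of $\mathcal A_{QQ}^{\delta,\zeta}$ on the disc follows as in \eqref{eq:QQ-inverse-uniform}. The main obstacle is keeping the constants uniform on a full neighbourhood of $\mu$ rather than only on $\mathbb K\Subset\rho$; the Schur-complement bounds \eqref{eq:block-estimate-PP}--\eqref{eq:block-estimate-QQ} do not use $\mathbb K\subset\rho$, so this works.

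\emph{Step 3 (conclusion).} Taking square roots gives $\omega_{\delta,k}=\sqrt{\omega_0^2+\delta\zeta_{\delta,k}}=\omega_0+\delta\zeta_{\delta,k}/(2\omega_0)+\mathcal O(\delta^2)=\omega_0+\delta\lambda+\mathcal O(\delta^2)$, which is \eqref{asymp:compact}. For \eqref{eq:ingaocond}, the hypothesis \eqref{gapassump1} gives $\omega_0^2/v_b^2\notin\sigma(-\Delta_{\mathrm{Neu}};D)$, and \cref{lem:bulk-exterior-spectrum} with \eqref{gapassump2equiv} gives \eqref{eq:bulk-exterior-gap}. Hence \cref{lem:uniform-bulk-gap} applies, and $|\omega_{\delta,k}^2-\omega_0^2|=\mathcal O(\delta)<\eta_{\mathrm b}$ for small $\delta$, so $\omega_{\delta,k}^2\in I_{\mathrm b}$.
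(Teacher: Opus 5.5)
Your Steps~2 and~3 are correct and are essentially the paper's proof. The paper also works with the Schur complement $\mathcal S_\delta(\zeta)$ on a whole closed disc $\overline B$ about $\zeta_0=2\omega_0\lambda$; as you note, the block estimates need no resolvent condition there. It then derives $\|(2\omega_0\mathcal C(\omega_0)-\zeta_{\delta,k})b\|=\mathcal O(\delta)$ for a unit vector $b$ and concludes by the spectral theorem, which is your Weyl argument. The square root and the in-gap step via \cref{lem:uniform-bulk-gap} are also as you have them.

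The gap is the counting in Step~1. Set $X_\delta:=\frac{1}{2\pi\mathrm i}\oint_{\Gamma_r}\delta\mathcal R(\omega_0^2+\delta\zeta,\delta)\,\mathrm d\zeta$ and $X_0:=-P^*M^{-1/2}\Pi_\mu M^{-1/2}P$. The estimate $\|X_\delta-X_0\|=\mathcal O(r\delta)$ only yields $\operatorname{rank}X_\delta\geq m$. Rank is lower but not upper semicontinuous in operator norm: an arbitrarily small perturbation can add small nonzero singular values. The fact that nearby \emph{projections} have equal rank does not apply here. The reason is that $\mathcal R$ inverts the nonlinear DtN pencil; it is not the resolvent of a linear operator. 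Unwinding the DtN reduction, $X_\delta$ is, up to sign and the weight $v_{\vect j}^2$, the compression $\mathbbm 1_{\mathcal D}\Pi_{\mathcal L}\mathbbm 1_{\mathcal D}$ of the Riesz projection $\Pi_{\mathcal L}$ of $\mathcal L_\delta$ for the contour $\omega_0^2+\delta\Gamma_r$, and this compression is not idempotent. So, as written, Step~1 gives at least $m$ eigenvalues, not exactly $m$. Weyl's inequality in Step~2 localizes the characteristic values but does not count them. A smaller point: the reality of the characteristic values should be taken from the self-adjointness of $\mathcal L_\delta$. Hermitian symmetry of $\mathcal A_\delta(z)$ for real $z$ does not by itself force real roots.

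The paper obtains the exact count with the tool you reserve for the rate. Since $\mathcal I(\omega_0)$ is finite and $\delta^{-1}\mathcal S_\delta(\zeta)=\mathcal D_{\mathrm{eff}}(\zeta)+\delta\mathcal E_\delta(\zeta)$ uniformly on $\overline B$, the functions $\det(\delta^{-1}\mathcal S_\delta(\zeta))$ and $\det\mathcal D_{\mathrm{eff}}(\zeta)=\det M\cdot\det(2\omega_0\mathcal C(\omega_0)-\zeta)$ differ by $\mathcal O(\delta)$ on $\partial B$. The latter is bounded away from zero there, so Rouché's theorem gives exactly $m$ characteristic values in $B$. Replace Step~1 by this Rouché (or Gohberg--Sigal logarithmic-residue) count on the fixed disc; your Step~2 localization then supplies the $\mathcal O(\delta)$ rate and completes the proof.

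If you prefer to keep the Riesz-projection route, you need an extra estimate making $X_\delta$ an almost-projection. For example, show that every $u\in\operatorname{Ran}\Pi_{\mathcal L}$ carries only an $\mathcal O(\delta)$ fraction of its $m_\delta$-weighted mass outside $\mathcal D$. Then:
\begin{itemize}
\item $\Pi_{\mathcal L}\mathbbm 1_{\mathcal D}\Pi_{\mathcal L}\geq(1-C\delta)$ on $\operatorname{Ran}\Pi_{\mathcal L}$;
\item hence the nonzero spectrum of the self-adjoint compression lies in $[1-C\delta,1]$, and its rank equals $\operatorname{rank}\Pi_{\mathcal L}$;
\item norm-closeness to the suitably weighted limit, which is a rank-$m$ projection, forces this rank to be $m$.
\end{itemize}
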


\begin{proof}
Let $\zeta_0:=2\omega_0\lambda$ and choose a closed disk $\overline B$ centered at $\zeta_0$ such that its boundary contains no point of $2\omega_0\sigma(\mathcal C(\omega_0))$ and its interior contains no other distinct eigenvalue. The block estimates in the proof of \cref{thm_cont_to_disc_resolvent_converge} remain uniformly valid for $\zeta\in\overline B$. In particular, we have 
\begin{equation}
\label{eq:compact-schur-analytic}
\delta^{-1}\mathcal S_\delta(\zeta) = \mathcal D_{\mathrm{eff}}(\zeta)
+\delta\mathcal E_\delta(\zeta), \qquad
\sup_{\zeta\in\overline B\,,\ 0<\delta<\delta_0} \|\mathcal E_\delta(\zeta)\|<\infty.
\end{equation}
Recall that $\mathcal D_{\mathrm{eff}}(\zeta) = M^{1/2}\bigl(2\omega_0\mathcal C(\omega_0)-\zeta\bigr)M^{1/2}$ and that $\det\mathcal D_{\mathrm{eff}}(\zeta)$ has a zero of order $m$ at $\zeta_0$ and no other zero in $B$. Since the matrices are finite-dimensional, the uniform estimate in \eqref{eq:compact-schur-analytic} and the continuity of the determinant give 
\begin{equation*}
\sup_{\zeta\in\partial B}\left| \det\bigl(\delta^{-1}\mathcal S_\delta(\zeta)\bigr) -\det\mathcal D_{\mathrm{eff}}(\zeta) \right| = \mathcal{O}(\delta). 
\end{equation*}
Rouch\'e's theorem therefore shows that $\mathcal S_\delta$ has exactly $m$ characteristic values $\zeta_{\delta,1},\dots,\zeta_{\delta,m}$ in $B$, counted with algebraic multiplicity.

If $\mathcal S_\delta(\zeta_{\delta,k})a_{\delta,k}=0$ and $\|a_{\delta,k}\|_{\ell^2}=1$, then \eqref{eq:compact-schur-analytic} gives
\begin{equation*}
\|\mathcal D_{\mathrm{eff}}(\zeta_{\delta,k})a_{\delta,k}\|_{\ell^2} = \mathcal{O}(\delta). 
\end{equation*}
Letting $b_{\delta,k}:=M^{1/2}a_{\delta,k}$, the above estimate implies $\left\| \bigl(2\omega_0\mathcal C(\omega_0)-\zeta_{\delta,k}\bigr) b_{\delta,k} \right\|_{\ell^2}  = \mathcal{O}(\delta)$, 
which further gives, by the spectral theorem for the self-adjoint operator
$2\omega_0\mathcal C(\omega_0)$, 
\begin{equation*}
\operatorname{dist}\bigl(\zeta_{\delta,k},2\omega_0\sigma(\mathcal C(\omega_0))
\bigr) = \mathcal{O}(\delta). 
\end{equation*}
It follows that 
\begin{equation*}
\omega_{\delta,k}^2 = \omega_0^2+2\delta\omega_0\lambda+\mathcal O(\delta^2). 
\end{equation*}
Since $\omega_0>0$, taking the positive square root proves \eqref{asymp:compact}. Combining this bound with \cref{lem:uniform-bulk-gap} proves the in-gap condition \eqref{eq:ingaocond}, completing the proof. 
\end{proof}

\subsubsection{Line defects}
Let $\widetilde{m}_\delta:= m_\delta|_{\widetilde{Y}}$ and let $\mathcal L_\delta^\alpha$ be the self-adjoint operator in $L^2(\widetilde Y, \widetilde{m}_\delta\,\mathrm dx)$ associated with the restriction of the form \eqref{def:formLdelta} to the quasi-periodic space:
\begin{equation*}
H^1_\alpha(\widetilde Y):=\left\{u\in H^1(\widetilde Y): u|_{\Gamma_+}=e^{\mathrm i\alpha}u|_{\Gamma_-}\right\}.
\end{equation*}
For $\vect{\alpha}=(\alpha_1,\alpha_2)\in Y^*$, let $\mathcal L_{\delta,\mathrm{bulk}}^{\vect{\alpha}}$ denote the full Floquet--Bloch fiber of $\mathcal L_\delta^{\mathrm{bulk}}$. Then, the projected bulk spectrum associated with $\alpha\in\widetilde Y^*$ is given by 
\begin{equation} \label{eq:projected-bulk-spectrum}
\Sigma_{\mathrm{bulk},\delta}(\alpha) := \overline{\bigcup \{\sigma\bigl(\mathcal L_{\delta,\mathrm{bulk}}^{\vect{\alpha}}\bigr)\,;\ \vect{\alpha}=(\alpha_1,\alpha_2)\in Y^*\,,\  \alpha_1=\alpha}\}.
\end{equation}
Similarly to \cref{lem:bulk-exterior-spectrum}, we have 
\begin{equation}
\label{eq:line-essential-spectrum}
\sigma_{\mathrm{ess}}(\mathcal L_\delta^\alpha) = \Sigma_{\mathrm{bulk},\delta}(\alpha) \subset \sigma(\mathcal L_\delta^{\mathrm{bulk}}), \qquad \alpha\in\widetilde Y^*. 
\end{equation}
Hence, by \Cref{lem:uniform-bulk-gap}, $I_{\mathrm b}$ is disjoint from $\Sigma_{\mathrm{bulk},\delta}(\alpha)$ uniformly in $\alpha$ for all sufficiently small $\delta$.

\begin{corollary} \label{cor:line-asymptotics}
For the line-defect setting in \Cref{subsec:QP}, let $\omega_0>0$ satisfy \eqref{gapassump1}, \eqref{gapassump1uniform}, and \eqref{gapassump2equiv}. Let $\lambda_1^\alpha\leq\cdots\leq\lambda_{m_\star}^\alpha$ be the ordered eigenvalues of $\mathcal C^\alpha(\omega_0)$, counted with multiplicity. For any small enough $\delta > 0$ and $\alpha\in\widetilde Y^*$, the operator $\mathcal L_\delta^\alpha$ has exactly $m_\star$ eigenvalues $z_{\delta,j}^\alpha := \bigl(\omega_j^\alpha(\delta)\bigr)^2$, $j=1,\dots,m_\star$, bifurcating from $\omega_0^2$, counted with multiplicity. Moreover, $\omega_j^\alpha(\delta) > 0$ are band functions that are continuous in $\alpha$ and satisfy
\begin{equation} \label{asymp:line}
\sup_{\alpha\in\widetilde Y^*}\max_{1\leq j\leq m_\star} \left|\omega_j^\alpha(\delta)-\omega_0
-\delta\lambda_j^\alpha\right| = \mathcal{O}(\delta^2),
\end{equation}
and lie in a projected bulk gap uniformly in $\alpha$ and $j$:
\begin{equation} \label{eq:line-in-gap}
z_{\delta,j}^\alpha \in\left(\omega_0^2-\frac{\eta_{\mathrm b}}2, \omega_0^2+\frac{\eta_{\mathrm b}}2\right)\subset I_{\mathrm b},
\qquad
\operatorname{dist}\left(z_{\delta,j}^\alpha, \Sigma_{\mathrm{bulk},\delta}(\alpha)\right) > 0\,.
\end{equation}
In particular, we have 
\begin{equation*}
\bigcup\Big\{z_{\delta,j}^\alpha\,;\ \alpha\in\widetilde Y^*\Big\} \subset\sigma(\mathcal L_\delta) \bigcap I_{\mathrm b}, \qquad j=1,\dots,m_\star.
\end{equation*}
If $\alpha\mapsto\lambda_j^\alpha$ is nonconstant, then $\omega_j^\alpha(\delta)$ is not a flat band for all sufficiently small $\delta$.
\end{corollary}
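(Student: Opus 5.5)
The plan is to repeat the argument of \cref{cor:compact-asymptotics} fiberwise in $\alpha$, with constants uniform in $\alpha\in\widetilde Y^*$. For each $\alpha$, eliminating the exterior field on the strip $\widetilde\Omega_{\mathrm{line}}$ with the $\alpha$-quasi-periodic exterior DtN map $\mathcal T^{\alpha,z}$ produces a pencil $\mathcal A^\alpha_\delta(z)$ on $H^1(D_{(0,0)})\oplus\bigoplus_{j_2\neq0}H^1(D_{(0,j_2)})$. Its kernel characterizes the eigenvalues of $\mathcal L^\alpha_\delta$, exactly as in \eqref{eq_variational_characterization_subwavelength_resonance}. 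By \eqref{gapassump2star} and compactness of $\widetilde Y^*$, the analogue of \cref{prop_d2n_map} gives that $(\alpha,z)\mapsto\mathcal T^{\alpha,z}$ is bounded uniformly for $|z-\omega_0^2|$ small, continuous in $\alpha$, and analytic in $z$. This uniformity yields the Step~1 bound \eqref{eq:dtn-first-order-bound} uniformly in $\alpha$.

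\textbf{Schur reduction.} I decompose along the $m_\star$-dimensional resonant space spanned by $u_{(0,0),\ell}$ and its complement. \Cref{lem:complementary-invertibility} applies verbatim, since the complement block involves only interior Neumann data, which do not depend on $\alpha$. Steps 2--4 of the proof of \cref{thm_cont_to_disc_resolvent_converge} then give, for $\zeta$ in a fixed large disk $\overline B$, the expansion
$$\delta^{-1}\mathcal S^\alpha_\delta(\zeta)=v_\star^{-2}\bigl(2\omega_0\mathcal C^\alpha(\omega_0)-\zeta\bigr)+\delta\mathcal E^\alpha_\delta(\zeta),$$
with $\sup_{\alpha,\zeta,\delta}\|\mathcal E^\alpha_\delta\|<\infty$. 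The leading term is identified with $\mathcal C^\alpha$ via \eqref{eq:def-freq-cap-operator-Line}, because $G^*\mathcal T^{\alpha,\omega_0^2}G$ evaluates $\partial_\nu V^\alpha_\ell$. I choose $B$ to contain $2\omega_0\sigma(\mathcal C^\alpha)$ for every $\alpha$; this is possible since $\mathcal C^\alpha$ is uniformly bounded. The $m_\star\times m_\star$ determinant comparison and Rouché's theorem on $\partial B$ then give exactly $m_\star$ characteristic values, so $\mathcal L^\alpha_\delta$ has exactly $m_\star$ eigenvalues $z=\omega_0^2+\delta\zeta$ with $\zeta\in B$.

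\textbf{Ordered-eigenvalue estimate.} The key difficulty, absent in the compact case, is getting the ordered estimate $|\zeta_j-2\omega_0\lambda_j^\alpha|=\mathcal O(\delta)$ for each index $j$. Merely bounding the distance to the spectrum would not control multiplicities when eigenvalues of $\mathcal C^\alpha$ cross as $\alpha$ varies. To handle this, I use that $\mathcal C^\alpha$ is Hermitian and that the reduced problem is self-adjoint for real $\zeta$. Concretely, $\delta^{-1}\mathcal S^\alpha_\delta(\zeta)$ is a Hermitian matrix pencil for real $\zeta$, with $\partial_\zeta$ of it equal to $-v_\star^{-2}+\mathcal O(\delta)$, hence negative definite. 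Its eigenvalues $\mu_k(\zeta)$ are therefore strictly decreasing in $\zeta$. By Weyl's inequality they lie within $\mathcal O(\delta)$ of $v_\star^{-2}(2\omega_0\lambda_k^\alpha-\zeta)$. The zeros $\zeta_j$ of the ordered $\mu_j$ are thus within $\mathcal O(\delta)$ of $2\omega_0\lambda_j^\alpha$ index by index, uniformly in $\alpha$.

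\textbf{Conclusion.} Taking square roots with $\omega_0>0$ gives \eqref{asymp:line}. Continuity of $\omega_j^\alpha(\delta)$ in $\alpha$ follows from the continuity of $\mathcal S^\alpha_\delta$ in $\alpha$ and the min--max ordering. The in-gap statement \eqref{eq:line-in-gap} follows because $\delta|\zeta_j|\le C\delta<\eta_{\mathrm b}/2$, combined with \cref{lem:uniform-bulk-gap} and \eqref{eq:line-essential-spectrum}. The direct-integral decomposition places each $z^\alpha_{\delta,j}$ in $\sigma(\mathcal L_\delta)$. Finally, if $\lambda_j^{\alpha_1}\neq\lambda_j^{\alpha_2}$, the $\mathcal O(\delta^2)$ error is eventually smaller than $\delta|\lambda_j^{\alpha_1}-\lambda_j^{\alpha_2}|$, so the band $\omega_j^\alpha(\delta)$ is not flat.
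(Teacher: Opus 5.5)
Your proposal is correct and follows essentially the paper's route: a fiberwise Schur-complement reduction as in \cref{thm_cont_to_disc_resolvent_converge}, made uniform in $\alpha$ via \eqref{gapassump2star} and compactness of $\widetilde Y^*$, then Hermitian-pencil perturbation theory, with \cref{lem:uniform-bulk-gap} and \eqref{eq:line-essential-spectrum} giving the in-gap claim. Your argument for the index-by-index estimate (eigenvalues of the reduced Hermitian matrix are strictly decreasing in $\zeta$, combined with Weyl's inequality) is a correct explicit version of the ``standard perturbation theory for Hermitian matrix pencils'' that the paper invokes without details.
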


\begin{proof}
For each fixed $\alpha$, we apply the block decomposition and Schur-complement argument used in the proof of \cref{thm_cont_to_disc_resolvent_converge}. The exterior gap condition \eqref{gapassump2star} and the compactness of $\widetilde Y^*$ ensure that all estimates are uniform in $\alpha$. The standard perturbation theory for Hermitian matrix pencils then yields exactly $m_\star$ real roots, counted with multiplicity, and the uniform expansion \eqref{asymp:line}.
The in-gap conclusion \eqref{eq:line-in-gap} follows from \cref{lem:uniform-bulk-gap} and \eqref{eq:line-essential-spectrum}, while the nonflatness assertion follows directly from \eqref{asymp:line}. The remaining details follow the arguments used in the proofs of \cref{thm_cont_to_disc_resolvent_converge,cor:compact-asymptotics} and are omitted.
\end{proof}

\subsection{Exponential decay of the off-diagonal coefficients}\label{sec:exponentialdecay}

In this section, we show that the frequency-dependent capacitance operator $\mathcal{C}(\omega_0)$ exhibits an exponential off-diagonal decay rate. Intuitively, the element in the row $(\vect{j},\ell)$ and the column $(\vect{j}',\ell')$ measures the normal flux induced on $\partial D_{\vect{j}}$ by the exterior field generated from $\partial D_{\vect{j}'}$, then the exterior gap makes this interaction exponentially local. 

We begin with a localized resolvent estimate, following from the standard Combes--Thomas argument; see, for example, \cite[Lemma~12]{figotin1996localization}. 
Let $k_0^2:=\omega_0^2/v^2$. Since $D,\widetilde D\Subset Y$, we choose neighborhoods $\mathcal O_{\vect{j}}\Subset Y+\vect{j}$ and cutoff functions $\chi_{\vect{j}}\in W^{1,\infty}(\mathbb R^d)$ such that
\begin{equation*}
0\leq\chi_{\vect{j}}\leq1,
\qquad
\chi_{\vect{j}}=1\text{ near }\partial D_{\vect{j}},
\qquad
\operatorname{supp}\chi_{\vect{j}}\subset\mathcal O_{\vect{j}},
\end{equation*}
and $\sup_{\vect{j}}\|\chi_{\vect{j}}\|_{W^{1,\infty}}<\infty$. 
For a domain $\Omega$, we denote by $H^{-1}(\Omega)$ the anti-dual of $H_0^1(\Omega)$, where $H_0^1(\Omega)$ is equipped with its full $H^1$-norm.

\begin{lemma} \label{lem:localized-CT}
Under \eqref{gapassump2equiv}, there exist $C,\beta>0$ such that for all $\vect{j},\vect{j}'\in\Lambda$, 
\begin{equation} \label{eq:localized-CT}
\left\| \chi_{\vect{j}} (-\Delta_{\mathrm{Dir}} -k_0^2)^{-1} \chi_{\vect{j}'} \right\|_{\mathcal B(H^{-1}(\mathbb R^d\setminus\overline{\mathcal D}), H_0^1(\mathbb R^d\setminus\overline{\mathcal D}))} \leq Ce^{-\beta|\vect{j}-\vect{j}'|}.
\end{equation}
\end{lemma}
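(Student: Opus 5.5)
We follow the standard Combes--Thomas strategy: conjugate the exterior Dirichlet operator with an exponential weight, show that the conjugated operator stays invertible for small weight strength, and read off the off-diagonal decay from the weights on the supports of the cutoffs. Write $\Omega:=\mathbb R^d\setminus\overline{\mathcal D}$ and $H:=-\Delta_{\mathrm{Dir}}$ on $\Omega$. By \eqref{gapassump2equiv}, $H-k_0^2$ is an isomorphism $H_0^1(\Omega)\to H^{-1}(\Omega)$.

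To see this, note that the Lax--Milgram-type form $\int_\Omega \nabla u\cdot\nabla\overline w-k_0^2u\overline w$ has inverse bounded by $(1+k_0^2)\,d_{\mathrm{ext}}^{-1}$ in $\mathcal B(H^{-1},H_0^1)$. This follows from the spectral-calculus argument of \cref{lem:complementary-invertibility}: $\lambda\mapsto(1+\lambda)/|\lambda-k_0^2|$ is bounded on $\sigma(H)$, with $d_{\mathrm{ext}}$ the distance in \eqref{gapassump2equiv}.

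\textbf{Step 1: the weight.} Fix a unit vector $e\in\mathbb R^d$ and consider bounded Lipschitz weights
\[
\phi(x)=\min\{e\cdot x,\,L\},\qquad |\nabla\phi|\le1,
\]
with $L$ a truncation level that keeps $e^{\pm\beta\phi}$ bounded, so that multiplication by $e^{\pm\beta\phi}$ preserves $H_0^1(\Omega)$ and $H^{-1}(\Omega)$. Define
\[
H_\beta:=e^{\beta\phi}He^{-\beta\phi}=H+\beta\,B_\beta,
\]
where $B_\beta$ collects the first-order terms $\nabla\phi\cdot\nabla$ and $\beta|\nabla\phi|^2$. Concretely, the conjugated form is
\[
\int_\Omega(\nabla u-\beta u\nabla\phi)\cdot(\nabla\overline w+\beta\overline w\nabla\phi)
=\int_\Omega \nabla u\cdot\nabla\overline w+\beta\,\mathcal E_\beta(u,w),
\]
with $|\mathcal E_\beta(u,w)|\le C(1+\beta)\|u\|_{H^1}\|w\|_{H^1}$, uniformly in $L$ and in $e$. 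Since the Dirichlet condition is preserved under multiplication by $e^{\pm\beta\phi}$, we get
\[
\|H_\beta-H\|_{\mathcal B(H_0^1,H^{-1})}\le C\beta .
\]
A Neumann series then shows that $H_\beta-k_0^2$ is invertible with inverse bounded by $2\|(H-k_0^2)^{-1}\|$ for $\beta\le\beta_0$, where $\beta_0$ is independent of $L$ and $e$. Moreover,
\[
(H_\beta-k_0^2)^{-1}=e^{\beta\phi}(H-k_0^2)^{-1}e^{-\beta\phi}.
\]

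\textbf{Step 2: sandwiching with cutoffs.} Write
\[
\chi_{\vect j}(H-k_0^2)^{-1}\chi_{\vect j'}
=\bigl(\chi_{\vect j}e^{-\beta\phi}\bigr)(H_\beta-k_0^2)^{-1}\bigl(e^{\beta\phi}\chi_{\vect j'}\bigr).
\]
Choose $e=(\vect j'-\vect j)/|\vect j'-\vect j|$ and $L$ large. Because $\operatorname{supp}\chi_{\vect j}\subset Y+\vect j$ and the cells are of bounded diameter $\rho$, we have $\phi\ge e\cdot\vect j'-\rho$ on $\operatorname{supp}\chi_{\vect j'}$ and $\phi\le e\cdot\vect j+\rho$ on $\operatorname{supp}\chi_{\vect j}$. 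Here one must make sure the multiplier is bounded as a map, and I would place the factors so that the weight difference comes out as $e^{-\beta|\vect j-\vect j'|}$. We also need the multiplier norms of $\chi e^{\pm\beta\phi}$ on $H_0^1$ and on $H^{-1}$ (the latter by duality). These are controlled by
\[
\sup_{\operatorname{supp}\chi}e^{\pm\beta\phi}\cdot\bigl(\|\chi\|_{W^{1,\infty}}+\beta\bigr),
\]
using $\sup_{\vect j}\|\chi_{\vect j}\|_{W^{1,\infty}}<\infty$. The exponents combine to
\[
e^{\beta(\phi|_{\vect j}-\phi|_{\vect j'})}\le e^{2\beta\rho}e^{-\beta|\vect j-\vect j'|},
\]
provided the sign convention is chosen correctly; if the signs come out reversed, replace $e$ by $-e$. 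This gives \eqref{eq:localized-CT} with $C$ uniform in $\vect j,\vect j'$.

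\textbf{Main obstacle.} The only genuinely delicate point is Step 1 in the indefinite, non-coercive setting. Since $k_0^2$ lies in a gap rather than below the spectrum, coercivity is unavailable, and invertibility of $H_\beta-k_0^2$ must come purely from perturbing the bounded inverse. This requires that the perturbation bound be in the $\mathcal B(H_0^1,H^{-1})$ topology, uniformly in the truncation $L$, and that $e^{\beta\phi}$ preserve $H_0^1(\Omega)$ on the unbounded, possibly nonperiodic exterior domain. Truncating $\phi$ to make the weight bounded resolves the latter, and $L$-uniform bounds allow letting $L\to\infty$ (or simply fixing $L$ larger than $e\cdot\vect j'$).
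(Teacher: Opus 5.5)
Your proposal is correct and is essentially the standard Combes--Thomas argument (conjugation by $e^{\beta\phi}$, an $\mathcal O(\beta)$ perturbation of the bounded $H^{-1}\to H_0^1$ inverse, then reading off the weights on the cutoff supports), which the paper invokes by citing Figotin--Klein rather than proving. Two small slips need fixing: $\min\{e\cdot x,L\}$ is truncated only from above, so take e.g.\ $\phi=\max\{-L,\min\{e\cdot x,L\}\}$ to make $e^{\pm\beta\phi}$ genuinely bounded; and with your factorization $(\chi_{\vect j}e^{-\beta\phi})(H_\beta-k_0^2)^{-1}(e^{\beta\phi}\chi_{\vect j'})$ the exponent is $\beta\bigl(\sup_{\operatorname{supp}\chi_{\vect j'}}\phi-\inf_{\operatorname{supp}\chi_{\vect j}}\phi\bigr)$, so the correct direction is $e=(\vect j-\vect j')/|\vect j-\vect j'|$.
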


The main result of this section is as follows. 

\begin{theorem}
\label{thm:decay}
Under \eqref{gapassump2equiv}, there exist $C,\beta>0$ such that
\begin{equation}
\label{eq_cap_decay}
\left| \mathcal C_{(\vect{j},\ell),(\vect{j}',\ell')}(\omega_0) \right| \leq Ce^{-\beta|\vect{j}-\vect{j}'|}
\end{equation}
for all $(\vect{j},\ell),(\vect{j}',\ell')\in\mathcal I(\omega_0)$.
\end{theorem}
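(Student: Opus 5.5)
The plan is to write each coefficient as a weak-form pairing and split the exterior solution $U_{\vect{j}',\ell'}$ into a localized lift plus a resolvent correction; \cref{lem:localized-CT} then supplies the decay. First, for $\vect{j}\neq\vect{j}'$, I would express the flux integral in \eqref{eq:def-freq-cap-operator-entry} by Green's identity. Let $\Phi_{\vect{j},\ell}:=\chi_{\vect{j}}E_{\vect{j}}u_{\vect{j},\ell}$, where $E_{\vect{j}}$ is a bounded extension of the trace $u_{\vect{j},\ell}|_{\partial D_{\vect{j}}}$ to the exterior. Since there are only four resonator types, $\|\Phi_{\vect{j},\ell}\|_{H^1}$ is bounded uniformly. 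Because $\Delta U+k_0^2U=0$ in the exterior and $\nu$ points out of $D_{\vect{j}}$, Green's identity gives
\begin{equation*}
\int_{\partial D_{\vect{j}}}\partial_\nu U_{\vect{j}',\ell'}|_+\,\overline{u_{\vect{j},\ell}}\,\mathrm d\sigma
=-\int_{\mathbb R^d\setminus\overline{\mathcal D}}\Bigl(\nabla U_{\vect{j}',\ell'}\cdot\nabla\overline{\Phi_{\vect{j},\ell}}-k_0^2U_{\vect{j}',\ell'}\overline{\Phi_{\vect{j},\ell}}\Bigr)\mathrm dx .
\end{equation*}
The integrand is supported in $\mathcal O_{\vect{j}}$, so it suffices to prove
\begin{equation*}
\|U_{\vect{j}',\ell'}\|_{H^1(\mathcal O_{\vect{j}}\setminus\overline{\mathcal D})}\lesssim e^{-\beta|\vect{j}-\vect{j}'|}.
\end{equation*}
The diagonal and near-diagonal entries are bounded by \cref{prop:cap-basic}, since the $u_{\vect{j},\ell}$ are orthonormal and $\mathcal C(\omega_0)$ is bounded.

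Second, I would decompose $U_{\vect{j}',\ell'}=\Phi_{\vect{j}',\ell'}+W$ with $W\in H^1_0(\mathbb R^d\setminus\overline{\mathcal D})$. This gives
\begin{equation*}
W=-(-\Delta_{\mathrm{Dir}}-k_0^2)^{-1}F,\qquad F:=(-\Delta-k_0^2)\Phi_{\vect{j}',\ell'}\in H^{-1}.
\end{equation*}
The forcing $F$ has support in $\operatorname{supp}\chi_{\vect{j}'}$ and norm bounded uniformly in $\vect{j}'$. The issue is that $F$ is not literally of the form $\chi_{\vect{j}'}F$, because $\chi_{\vect{j}'}$ need not equal $1$ on $\operatorname{supp}\chi_{\vect{j}'}$. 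I would fix this by choosing a second, fatter cutoff $\tilde\chi_{\vect{j}'}$ with $\tilde\chi_{\vect{j}'}=1$ on $\operatorname{supp}\chi_{\vect{j}'}$ and $\operatorname{supp}\tilde\chi_{\vect{j}'}\Subset Y+\vect{j}'$. Then $F=\tilde\chi_{\vect{j}'}F$. The Combes--Thomas bound \cref{lem:localized-CT} holds equally for such cutoffs, since only their supports and uniform $W^{1,\infty}$ bounds matter, or one may simply use the enlarged cutoffs in the lemma from the start.

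Third, on $\mathcal O_{\vect{j}}$ with $\vect{j}\neq\vect{j}'$ we have $\Phi_{\vect{j}',\ell'}=0$, so $U_{\vect{j}',\ell'}=W$ there. Multiplying by a cutoff equal to $1$ on $\mathcal O_{\vect{j}}$ and applying \cref{lem:localized-CT},
\begin{equation*}
\|W\|_{H^1(\mathcal O_{\vect{j}})}\le\|\tilde\chi_{\vect{j}}(-\Delta_{\mathrm{Dir}}-k_0^2)^{-1}\tilde\chi_{\vect{j}'}F\|_{H^1}\le Ce^{-\beta|\vect{j}-\vect{j}'|}.
\end{equation*}
Inserting this into the Green identity, and using that $v_{\vect{j}},v_{\vect{j}'}$ take only two values and $\omega_0>0$, gives \eqref{eq_cap_decay}.

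The main obstacle is justifying the Green identity in the weak $H^{-1/2}$–$H^{1/2}$ sense, together with the bookkeeping that makes the cutoffs compatible with \cref{lem:localized-CT}. The Combes--Thomas input itself comes from \eqref{gapassump2equiv} via the cited lemma. I would handle the Green identity by taking the weak definition of the conormal derivative (that is, $\mathcal T^{\omega_0^2}$) as the definition of the flux, so the identity holds by construction.
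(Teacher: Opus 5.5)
Your proposal is correct and follows essentially the same route as the paper: you split $U_{\vect{j}',\ell'}$ into a local lift plus $(-\Delta_{\mathrm{Dir}}-k_0^2)^{-1}$ applied to a compactly supported source, apply \cref{lem:localized-CT} for $\vect{j}\neq\vect{j}'$, and bound the diagonal entries by \cref{prop:cap-basic}. The differences are only bookkeeping: the paper chooses the lifting supported in $\{\chi_{\vect{j}}=1\}$, so no fatter cutoff is needed, and it cites a local normal-trace estimate where you write out the equivalent Green-identity pairing against a localized lift of $u_{\vect{j},\ell}$.
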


\begin{proof}
We first introduce uniformly bounded local right inverses of the trace maps. More precisely, for each
$\vect{j}\in\Lambda$, we choose
\begin{equation*}
\mathcal E_{\vect{j}}:H^{1/2}(\partial D_{\vect{j}}) \longrightarrow H^1(\mathbb R^d\setminus\overline{\mathcal D})
\end{equation*}
such that $\mathcal E_{\vect{j}}h$ has trace $h$ on $\partial D_{\vect{j}}$, vanishes on all other boundary components, and satisfies
\begin{equation*}
\|\mathcal E_{\vect{j}}h\|_{H^1(\mathbb R^d\setminus\overline{\mathcal D})} \leq C\|h\|_{H^{1/2}(\partial D_{\vect{j}})}\,, \qquad {\rm supp}(\mathcal E_{\vect{j}}h) \subset \{\chi_{\vect{j}}=1\}
\end{equation*}
with $C$ independent of $\vect{j}$. Using the zero-extension convention for $u_{\vect{j},\ell}$ introduced after \eqref{def:activeindex}, the $L^2$-normalization, \eqref{neumanneq}, and the uniform trace inequality give
\begin{equation*}
\sup_{(\vect{j},\ell)\in\mathcal I(\omega_0)} \|u_{\vect{j},\ell}\|_{H^{1/2}(\partial D_{\vect{j}})}<\infty. 
\end{equation*}
Define compactly supported functions:
\begin{equation*}
f_{\vect{j},\ell} :=(\Delta+k_0^2)\mathcal E_{\vect{j}}u_{\vect{j},\ell} \in H^{-1}(\mathbb R^d\setminus\overline{\mathcal D})\,, \qquad {\rm supp}(f_{\vect{j},\ell}) \subset \{\chi_{\vect{j}}=1\},
\end{equation*}
satisfying
\begin{equation} \label{eq:uniform-local-source}
\sup_{(\vect{j},\ell)\in\mathcal I(\omega_0)} \|f_{\vect{j},\ell}\|_{H^{-1}(\mathbb R^d\setminus\overline{\mathcal D})} <\infty.
\end{equation}
Noting that $U_{\vect{j},\ell}-\mathcal E_{\vect{j}}u_{\vect{j},\ell}$ has zero trace on $\partial\mathcal D$, we have 
\begin{equation}  \label{eq_resolvent_expression_cap}
U_{\vect{j},\ell} = \mathcal E_{\vect{j}}u_{\vect{j},\ell} +(-\Delta_{\mathrm{Dir}} -k_0^2)^{-1} f_{\vect{j},\ell}.
\end{equation}
For $\vect{j}\neq\vect{j}'$, the local lifting $\mathcal E_{\vect{j}'}u_{\vect{j}',\ell'}$ associated with the source resonator $D_{\vect{j}'}$ vanishes in a neighborhood of the target $D_{\vect{j}}$. Applying \cref{lem:localized-CT} to the remaining resolvent term in \eqref{eq_resolvent_expression_cap} gives
\begin{equation}
\label{eq:field-localization}
\|\chi_{\vect{j}}U_{\vect{j}',\ell'}\|_{H^1(\mathbb R^d\setminus\overline{\mathcal D})} \leq Ce^{-\beta|\vect{j}-\vect{j}'|}, \qquad \vect{j}\neq\vect{j}'.
\end{equation}
Since $U_{\vect{j}',\ell'}$ satisfies the Helmholtz equation in a neighborhood of $\partial D_{\vect{j}}$, the local normal trace estimate yields
\begin{equation*}
\left\| \left.\partial_\nu U_{\vect{j}',\ell'}\right|_+
\right\|_{H^{-1/2}(\partial D_{\vect{j}})} \leq Ce^{-\beta|\vect{j}-\vect{j}'|}, \qquad \vect{j}\neq\vect{j}'\,,
\end{equation*}
with constants $C$ and $\beta$ uniform in $\vect{j}$. 
Substitution into \eqref{eq:def-freq-cap-operator-entry} proves \eqref{eq_cap_decay} when $\vect{j}\neq\vect{j}'$. The elements $\mathcal C_{(\vect{j},\ell),(\vect{j}',\ell')}$ with $\vect{j}=\vect{j}'$ are uniformly bounded by \cref{prop:cap-basic}, completing the proof.
\end{proof}

\section{Patch approximation and local computation}\label{sec:pathch}

The exponential locality established in \cref{thm:decay} has two distinct computational consequences. First, the full capacitance operator is exponentially compressible: an \emph{interaction truncation radius} $N$ of logarithmic size suffices for a given operator accuracy. Second, each retained coefficient can be approximated by solving a local Helmholtz problem on a \emph{patch of radius} $R$. We will establish these facts in this section. 

At the reference frequency $\omega_0$, we prove that the local construction converges exponentially in $N$ and $R$, provided that the growing patch problems satisfy the uniform resolvent bound \eqref{eq:patch-stability}; see \cref{lem:dipatch}. This additional stability condition does not follow from the infinite-volume exterior gap \eqref{gapassump2equiv}, since the artificial patch boundary may introduce eigenvalues close to the reference frequency. We therefore also introduce the auxiliary complex shift $\omega_0^2+\mathrm{i}\gamma$. The resulting absorption makes every outgoing finite-cluster problem in free space invertible with $\mathcal O(\gamma^{-1})$ resolvent bound; see \cref{lem:uniform-complex-stability}. Hermitian symmetrization then eliminates the first-order regularization bias, yielding an exponentially convergent stabilized approximation without any real-frequency finite-cluster stability assumption; see \cref{thm:stabilized-finite-cluster}. These results lead to the error and complexity estimates for approximating defect eigenfrequencies established in \Cref{subsec:spectral-complexity}.


We first establish the exponential compressibility of $\mathcal{C}(\omega_0)$ as a consequence of \cref{thm:decay}. 
Since only finitely many resonator types occur (\Cref{subsec:continuous-problem}), we have 
\begin{equation} \label{eq:boundmj}
    m_{\max}:=\sup_{\vect{j}\in\Lambda}m_{\vect{j}}<\infty,
\end{equation}
for the multiplicity $m_{\vect{j}}$ in \eqref{def:neumann_space}. For $N\geq0$, define the truncated operator by
\begin{equation} \label{eq:exact-truncation}
\bigl(\mathcal C_N^{\mathrm{tr}}(\omega_0)\bigr)_{(\vect{j},\ell),(\vect{j}',\ell')} :=
\begin{cases}
    \mathcal C_{(\vect{j},\ell),(\vect{j}',\ell')}(\omega_0),&|\vect{j}-\vect{j}'|\leq N,\\ 
0,&|\vect{j}-\vect{j}'|>N.
\end{cases}
\end{equation}

\begin{proposition}[Exponential operator compression]
\label{prop:exact-truncation}
Under \eqref{gapassump2equiv}, there exist $C,\mu>0$ such that
\begin{equation} \label{eq:exact-truncation-error}
\left\|\mathcal C(\omega_0)-\mathcal C_N^{\mathrm{tr}}(\omega_0)\right\|_{
\mathcal B(\ell^2(\mathcal I(\omega_0)))}
\leq Ce^{-\mu N}.
\end{equation}
Hence, $N=\mathcal O(\log\varepsilon^{-1})$ suffices to achieve the error $\mathcal{O}(\varepsilon)$. 
\end{proposition}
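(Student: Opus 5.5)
The plan is to bound the remainder $E_N:=\mathcal C(\omega_0)-\mathcal C_N^{\mathrm{tr}}(\omega_0)$ by the Schur test, with the entrywise decay from \cref{thm:decay} as the only input. By \eqref{eq:exact-truncation}, the kernel of $E_N$ is supported on pairs with $|\vect{j}-\vect{j}'|>N$, where it agrees with the kernel of $\mathcal C(\omega_0)$. \Cref{thm:decay} then gives
\begin{equation*}
\bigl|(E_N)_{(\vect{j},\ell),(\vect{j}',\ell')}\bigr|\leq Ce^{-\beta|\vect{j}-\vect{j}'|}\mathbbm{1}_{\{|\vect{j}-\vect{j}'|>N\}}.
\end{equation*}
For a kernel $K$ on $\mathcal I(\omega_0)$, the Schur test gives $\|K\|_{\mathcal B(\ell^2)}\leq(\sup_{\mathrm{row}}\sum|K|)^{1/2}(\sup_{\mathrm{col}}\sum|K|)^{1/2}$. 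Since the bound above is symmetric in $\vect{j}$ and $\vect{j}'$, it suffices to control the row sums.

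Fix a row index $(\vect{j},\ell)$. The sum over $\ell'$ contributes at most a factor $m_{\max}$, which is finite by \eqref{eq:boundmj}. The sum over $\vect{j}'$ can be extended to all of $\Lambda$, which only enlarges it, so the row sum is at most
\begin{equation*}
Cm_{\max}\sum_{\vect{k}\in\Lambda,\ |\vect{k}|>N}e^{-\beta|\vect{k}|}.
\end{equation*}
This uses the translation invariance of $\Lambda$, with $\vect{k}=\vect{j}'-\vect{j}$, and is uniform in $\vect{j}$.

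The remaining step is the standard lattice counting estimate. The norm on $\mathbb R^d$ and the coordinate norm on $\mathbb Z^d$ are equivalent under the basis $e_1,\dots,e_d$, so $\#\{\vect{k}\in\Lambda: n<|\vect{k}|\leq n+1\}\leq C_\Lambda(1+n)^{d-1}$. Grouping the sum into these shells gives
\begin{equation*}
\sum_{|\vect{k}|>N}e^{-\beta|\vect{k}|}\leq C_\Lambda\sum_{n\geq N}(1+n)^{d-1}e^{-\beta n}\leq C'e^{-\mu N}\quad\text{for any }0<\mu<\beta.
\end{equation*}
The polynomial factor $(1+n)^{d-1}$ is absorbed by sacrificing part of the exponent, for instance by taking $\mu=\beta/2$. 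Combined with the Schur test, this yields \eqref{eq:exact-truncation-error}. The consequence is immediate: choosing $N\geq\mu^{-1}\log(C/\varepsilon)$ gives error at most $\varepsilon$.

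There is no real obstacle here. The only points requiring care are that the constants in \cref{thm:decay} are uniform over all index pairs, which the theorem provides, and that the multiplicity $m_{\max}$ is uniformly bounded, which \eqref{eq:boundmj} provides. Everything else is the routine shell count. The resulting rate $\mu$ is any number below $\beta$, so this argument loses nothing essential relative to the entrywise decay rate.
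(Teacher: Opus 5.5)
Your proposal is correct and follows essentially the same route as the paper: entrywise decay from \cref{thm:decay}, the uniform multiplicity bound \eqref{eq:boundmj}, a lattice shell count absorbing the polynomial factor into any rate $0<\mu<\beta$, and Schur's test using the row/column symmetry of the bound. Nothing is missing.
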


\begin{proof}
For each $\vect{j}'\in\Lambda$, the number of lattice sites in the shell $\{\vect{j}:n\leq|\vect{j}-\vect{j}'|<n+1\}$ is bounded by $\mathcal{O}((1+n)^{d-1})$. Hence, the decay \eqref{eq_cap_decay} and the bound \eqref{eq:boundmj} on $m_{\vect{j}}$ give
\begin{equation} \label{eq:sumrowcolume}
\sup_{(\vect{j}',\ell')\in\mathcal I(\omega_0)} \sum_{\substack{(\vect{j},\ell)\in\mathcal I(\omega_0)\\|\vect{j}-\vect{j}'|>N}}
\left|\mathcal C_{(\vect{j},\ell),(\vect{j}',\ell')}(\omega_0)\right| \leq
C\sum_{n>N}(1+n)^{d-1}e^{-\beta n}
\leq Ce^{-\mu N}
\end{equation}
for a constant $C$ and any fixed $0<\mu<\beta$. The same estimate
holds for the row sums, and Schur's test
\cite[Chapter~4]{halmosSunderBoundedIntegral1978} proves
\eqref{eq:exact-truncation-error}.
\end{proof}

Thus, $\mathcal C(\omega_0)$ is exponentially approximable in operator norm by finite-range block operators. For a straight line defect, $\mathcal C_N^{\mathrm{tr}}(\omega_0)$ is a block-banded operator of bandwidth $N$.

\subsection{Real-frequency local approximations}
\label{subsec:real-frequency-patches}

\Cref{prop:exact-truncation} has shown that only interactions between nearby resonators need to be retained. However, evaluating each retained coefficient $\mathcal C_{(\vect{j},\ell),(\vect{j}',\ell')}(\omega_0)$ still requires solving a global exterior problem and is therefore computationally expensive. To obtain a fully localized approximation, we replace the global fields defining these coefficients by fields computed on bounded patches. Unlike the harmonic local problems considered in \cite{ammari2026resolvent_con}, the local problems arising here are Helmholtz problems at the nonzero real frequency $\omega_0$. Their convergence therefore requires an additional uniform resolvent bound for the growing patch problems.

Following \cite{ammari2026resolvent_con}, for simplicity we take $\mathsf P_{\vect{j}}^R\subset\mathbb R^d$ to be a bounded cube or rectangular box formed by a finite union of lattice cells, centered at $D_{\vect{j}}$, and chosen to contain every cell $Y+\vect{k}$ with $|\vect{k}-\vect{j}|\leq R$. We set
\begin{equation*}
\Omega_{\vect{j}}^R:=\mathsf P_{\vect{j}}^R\cap\bigl(\mathbb R^d\setminus\overline{\mathcal D}\bigr).
\end{equation*}
The perforated exterior domain $\Omega_{\vect{j}}^R$ has artificial outer boundary $\Gamma_{\vect{j}}^R:=\partial\mathsf P_{\vect{j}}^R$. Let $\nu_\Gamma$ denote the unit normal on $\Gamma_{\vect{j}}^R$ pointing outward from $\mathsf P_{\vect{j}}^R$. We choose the patches so that, for constants $c,C>0$ independent of $\vect{j}$ and $R$,
\begin{equation*}
\operatorname{dist}(D_{\vect{j}},\Gamma_{\vect{j}}^R)\geq cR,
\end{equation*}
and that each patch is contained in a ball of radius $\mathcal{O}(R)$ centered at $D_{\vect{j}}$. Then, each patch contains at most $\mathcal{O}(R^d)$ resonators, and each resonator belongs to at most $\mathcal{O}(R^d)$ patches of radius $R$.

For $(\vect{j}',\ell')\in\mathcal I(\omega_0)$, define the local source field $U_{\vect{j}',\ell'}^R$ by
\begin{equation}
\begin{cases}
\Big(\Delta+\dfrac{\omega_0^2}{v^2}\Big)U_{\vect{j}',\ell'}^R=0 &\text{in }\Omega_{\vect{j}'}^R,\\
U_{\vect{j}',\ell'}^R=u_{\vect{j}',\ell'}&\text{on }\partial D_{\vect{j}'},\\
U_{\vect{j}',\ell'}^R=0&\text{on the other resonator boundaries in }\mathsf P_{\vect{j}'}^R,\\
U_{\vect{j}',\ell'}^R=0&\text{on }\Gamma_{\vect{j}'}^R.
\end{cases}
\label{eq:local-patch}
\end{equation}
For integers $1\leq N\leq R$ and $(\vect{j},\ell)\in\mathcal I(\omega_0)$, define
\begin{equation}
\label{def:patch}
\bigl(\mathcal C_{N,R}^{\mathrm{patch}}(\omega_0)\bigr)_{(\vect{j},\ell),(\vect{j}',\ell')} :=
\begin{cases}
-\dfrac{v_{\vect{j}}v_{\vect{j}'}}{2\omega_0} \Big\langle \left.\partial_\nu U_{\vect{j}',\ell'}^R\right|_+, u_{\vect{j},\ell} \Big\rangle_{H^{-1/2}(\partial D_{\vect{j}}),H^{1/2}(\partial D_{\vect{j}})}, &|\vect{j}-\vect{j}'|\leq N,\\[2mm]
0,&|\vect{j}-\vect{j}'|>N.
\end{cases}
\end{equation}
We now introduce the required stability assumption, \emph{i.e.}, a uniform positive distance from $\omega_0^2/v^2$ to the Dirichlet spectra of patches,
\begin{equation}
\label{eq:patch-stability}
    \sup_{\vect{j} \in\Lambda,R\geq1} \Big\| \Big(-\Delta_{\mathrm{Dir},\Omega_{\vect{j}}^R} -\frac{\omega_0^2}{v^2}\Big)^{-1} \Big\|_{H^{-1}(\Omega_{\vect{j}}^R)\to H^1_0(\Omega_{\vect{j}}^R)} <\infty.
\end{equation}

\begin{theorem}[Exponential convergence of the patch approximation] \label{lem:dipatch}
Under \eqref{gapassump2equiv} and \eqref{eq:patch-stability}, there exist $C,\beta,\mu>0$ such that, for all sufficiently large $R$ and all integers $1\leq N\leq R$,
\begin{equation}
\label{est:dirpatch}
\left\| \mathcal C(\omega_0)-\mathcal C_{N,R}^{\mathrm{patch}}(\omega_0) \right\|_{\mathcal B(\ell^2(\mathcal I(\omega_0)))} \leq C\left(e^{-\beta N}+e^{-\mu R}\right).
\end{equation}
\end{theorem}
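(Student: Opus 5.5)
We split the error by the triangle inequality as
\[
\mathcal C(\omega_0)-\mathcal C_{N,R}^{\mathrm{patch}}(\omega_0)
=\bigl(\mathcal C(\omega_0)-\mathcal C_N^{\mathrm{tr}}(\omega_0)\bigr)
+\bigl(\mathcal C_N^{\mathrm{tr}}(\omega_0)-\mathcal C_{N,R}^{\mathrm{patch}}(\omega_0)\bigr).
\]
The first term is bounded by $Ce^{-\mu N}$ by \cref{prop:exact-truncation}; after renaming constants this gives the $e^{-\beta N}$ contribution. The second term is a block-banded operator supported on $|\vect{j}-\vect{j}'|\le N\le R$. By Schur's test, as in \eqref{eq:sumrowcolume}, it suffices to show that each retained entry differs by at most $Ce^{-\mu' R}$, uniformly in indices. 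Since each row and column has $\mathcal O(N^d)\le \mathcal O(R^d)$ such entries and $m_{\max}<\infty$, the polynomial factor $R^d$ is absorbed by slightly decreasing $\mu'$.

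The entrywise bound reduces to comparing $U_{\vect{j}',\ell'}$ with $U^R_{\vect{j}',\ell'}$ near $\partial D_{\vect{j}}$. Write $U^R$ for the local field extended by zero outside $\mathsf P^R_{\vect{j}'}$. Then $W:=U_{\vect{j}',\ell'}-U^R_{\vect{j}',\ell'}$ restricted to $\Omega^R_{\vect{j}'}$ solves the homogeneous Helmholtz equation with zero trace on all resonator boundaries in the patch, and equals $U_{\vect{j}',\ell'}$ on $\Gamma^R_{\vect{j}'}$. Take a cutoff $\eta_R$ that equals $1$ on $\{x:\operatorname{dist}(x,D_{\vect{j}'})\ge cR/2\}$, vanishes near $D_{\vect{j}'}$ and inside the inner half of the patch, and has uniformly bounded gradient. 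Set $W=\eta_R U_{\vect{j}',\ell'}+W_0$ with $W_0\in H^1_0(\Omega^R_{\vect{j}'})$. Then
\[
W_0=-\Bigl(-\Delta_{\mathrm{Dir},\Omega^R_{\vect{j}'}}-k_0^2\Bigr)^{-1}\bigl[(\Delta+k_0^2)(\eta_R U_{\vect{j}',\ell'})\bigr],
\]
and the source is a commutator term supported in the annulus $\{cR/4\le \operatorname{dist}(x,D_{\vect{j}'})\le cR/2\}$. The stability assumption \eqref{eq:patch-stability} bounds this resolvent uniformly. Summing \eqref{eq:field-localization} over the $\mathcal O(R^{d})$ cells in the annulus gives $\|U_{\vect{j}',\ell'}\|_{H^1(\text{annulus})}\lesssim R^{d}e^{-\beta cR/4}$. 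Hence $\|W\|_{H^1(\Omega^R_{\vect{j}'})}\lesssim e^{-\mu' R}$.

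This crude global bound already suffices. The target $D_{\vect{j}}$ with $|\vect{j}-\vect{j}'|\le N\le R$ lies inside the patch once $R$ is large, and we should arrange it to lie at distance $\gtrsim 1$ from $\Gamma^R$; the patch containing all cells with $|\vect{k}-\vect{j}'|\le R$ ensures this. Since $W$ solves the homogeneous Helmholtz equation near $\partial D_{\vect{j}}$, the local normal-trace estimate used in the proof of \cref{thm:decay} gives
\[
\bigl\|\partial_\nu W|_+\bigr\|_{H^{-1/2}(\partial D_{\vect{j}})}\lesssim \|\chi_{\vect{j}}W\|_{H^1}\lesssim e^{-\mu' R}.
\]
Pairing with $u_{\vect{j},\ell}$, which is uniformly bounded in $H^{1/2}$, bounds the entry difference. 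The case $\vect{j}=\vect{j}'$ is included.

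The main obstacle is the step that uses \eqref{eq:patch-stability}. We must make sure the only $R$-dependent resolvent ever inverted is the patch Dirichlet resolvent, which is uniformly bounded by assumption, while all exponential smallness comes from the whole-space Combes--Thomas decay of $U_{\vect{j}',\ell'}$ in \cref{lem:localized-CT}. One subtlety is that $\eta_R U_{\vect{j}',\ell'}$ must avoid contributing nonzero traces on resonator boundaries. This is automatic, since $U_{\vect{j}',\ell'}$ vanishes on all boundaries other than $\partial D_{\vect{j}'}$, and $\eta_R$ vanishes near $D_{\vect{j}'}$.
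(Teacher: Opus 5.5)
Your proposal is correct and is essentially the paper's proof. The ingredients match: the truncation tail from \cref{prop:exact-truncation}; an entrywise $\mathcal O(e^{-\mu R})$ bound on the retained coefficients, obtained by subtracting a lifting from $U_{\vect{j}',\ell'}-U^R_{\vect{j}',\ell'}$ and applying \eqref{eq:patch-stability}; then the local normal-trace estimate and Schur's test. The only variation is harmless: you lift with the cutoff field $\eta_R U_{\vect{j}',\ell'}$, giving a commutator source in an interior annulus, instead of a generic extension of the trace on $\Gamma^R_{\vect{j}'}$ bounded via \eqref{field:est}.

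There are two small points to fix. First, \eqref{eq:field-localization} controls $U_{\vect{j}',\ell'}$ only where $\chi_{\vect{j}}=1$, that is, near $\partial D_{\vect{j}}$. To bound it on whole cells of the annulus, and on the outer region where $\eta_R\neq0$ (needed for $\|\eta_R U_{\vect{j}',\ell'}\|_{H^1}$), you need the Combes--Thomas estimate with cell-covering cutoffs. The same argument gives this, and the paper uses it implicitly as well. Second, the minus sign in your formula for $W_0$ should be dropped; it does not affect any norm bound.
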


\begin{proof}
For $(\vect{j}',\ell')\in\mathcal I(\omega_0)$, the localized Combes--Thomas argument used to prove \eqref{eq:field-localization}, together with local elliptic and trace estimates, gives
\begin{equation} \label{field:est}
\|U_{\vect{j}',\ell'}\|_{H^{1/2}(\Gamma_{\vect{j}'}^R)}
+\|\partial_{\nu_\Gamma}U_{\vect{j}',\ell'}\|_{H^{-1/2}(\Gamma_{\vect{j}'}^R)}
\leq Ce^{-\mu_0R},
\end{equation}
for some $\mu_0>0$; the polynomial growth in the number of boundary cells has been absorbed by decreasing the exponential rate. The difference $W_{\vect{j}',\ell'}^R:=U_{\vect{j}',\ell'}-U_{\vect{j}',\ell'}^R$ solves the homogeneous Helmholtz equation in $\Omega_{\vect{j}'}^R$, vanishes on every resonator boundary, and has trace $U_{\vect{j}',\ell'}$ on $\Gamma_{\vect{j}'}^R$. 
Similarly to the proof of \cref{thm:decay}, subtracting the lifting of $U_{\vect{j}',\ell'}|_{\Gamma_{\vect{j}'}^R}$ gives an element of $H_0^1(\Omega_{\vect{j}'}^R)$, and then applying \eqref{eq:patch-stability} to this difference yields
\begin{equation*}
\|W_{\vect{j}',\ell'}^R\|_{H^1(\Omega_{\vect{j}'}^R)} \leq Ce^{-\mu_0R}.
\end{equation*}
The local normal-trace estimate therefore shows that each retained coefficient is approximated with an error bounded by $Ce^{-\mu_0R}$. Since $N\leq R$, the polynomial growth in the number of retained coefficients can be absorbed into $e^{-\mu R}$ for some $0<\mu<\mu_0$. The Schur row and column sums of the omitted coefficients are bounded by $Ce^{-\beta N}$ by \eqref{eq_cap_decay}. Schur's test and \cref{prop:exact-truncation} then yield \eqref{est:dirpatch}.
\end{proof}

For the numerical implementation in $d=2$, let $\mathcal D_{\vect{j}'}^R$ be the finite union of the resonators contained in $\mathsf P_{\vect{j}'}^R$. We replace the artificial Dirichlet condition by the free-space outgoing formulation: for $(\vect{j}',\ell')\in\mathcal I(\omega_0)$, let $U_{\vect{j}',\ell'}^{R,\mathrm{out}}$ solve
\begin{equation}
\begin{cases}
\left(\Delta+\dfrac{\omega_0^2}{v^2}\right)U_{\vect{j}',\ell'}^{R,\mathrm{out}}=0 &\text{in }\mathbb R^2\setminus\overline{\mathcal D_{\vect{j}'}^R},\\
U_{\vect{j}',\ell'}^{R,\mathrm{out}}=u_{\vect{j}',\ell'}&\text{on }\partial D_{\vect{j}'},\\
U_{\vect{j}',\ell'}^{R,\mathrm{out}}=0&\text{on the other components of } \partial\mathcal D_{\vect{j}'}^R,\\
U_{\vect{j}',\ell'}^{R,\mathrm{out}}&\text{satisfies the Sommerfeld radiation condition}.
\end{cases}
\label{eq:local-patch-radiation}
\end{equation}
Define $\mathcal C_{N,R}^{\mathrm{out}}(\omega_0)$ by \eqref{def:patch}, with $U_{\vect{j}',\ell'}^R$ replaced by $U_{\vect{j}',\ell'}^{R,\mathrm{out}}$. This problem is directly implemented using the free-space Green function, but its stability must be controlled uniformly as the cluster grows. 

Let $\mathcal T_{\vect{j},R}^{\mathrm{out}}(\omega_0)$ be the free-space outgoing DtN map on $\Gamma_{\vect{j}}^R$ defined with respect to
$\nu_\Gamma$, and let
\begin{equation*}
\mathcal V_{\vect{j},R} := \left\{ w\in H^1(\Omega_{\vect{j}}^R): w=0\text{ on every resonator boundary in }\mathsf P_{\vect{j}}^R \right\}.
\end{equation*}
We introduce the variational operator associated with \eqref{eq:local-patch-radiation},
\begin{equation*}
\mathcal B_{\vect{j},R}^{\mathrm{out}}:\mathcal V_{\vect{j},R}\longrightarrow\mathcal V_{\vect{j},R}^*,
\end{equation*}
defined, for $w,\phi\in\mathcal V_{\vect{j},R}$, by
\begin{equation*}
\langle\mathcal B_{\vect{j},R}^{\mathrm{out}}w,\phi\rangle_{\mathcal V_{\vect{j},R}^*,\mathcal V_{\vect{j},R}} := \int_{\Omega_{\vect{j}}^R} \Big( \nabla w\cdot\nabla\overline\phi -\frac{\omega_0^2}{v^2}w\overline\phi \Big)\,\mathrm dx - \left\langle \mathcal T_{\vect{j},R}^{\mathrm{out}}(\omega_0)w, \phi \right\rangle_{H^{-1/2}(\Gamma_{\vect{j}}^R),H^{1/2}(\Gamma_{\vect{j}}^R)}.
\end{equation*}
For a bounded operator $A$, we use the convention $\operatorname{Im}A:=(A-A^*)/(2\mathrm i)$. We again assume the uniform nonresonance condition for growing clusters, similarly to \eqref{eq:patch-stability}, 
\begin{equation} \label{eq:outgoing-patch-stability}
\sup_{\vect{j}\in\Lambda, R\geq1} \Big(\|\mathcal T_{\vect{j},R}^{\mathrm{out}}(\omega_0)\|_{\mathcal B(H^{1/2}(\Gamma_{\vect{j}}^R),H^{-1/2}(\Gamma_{\vect{j}}^R))} + \| (\mathcal B_{\vect{j},R}^{\mathrm{out}})^{-1}\|_{\mathcal B(\mathcal V_{\vect{j},R}^*,\mathcal V_{\vect{j},R})} \Big) < \infty.
\end{equation}

\begin{theorem}[Outgoing patch error] \label{thm:outgoing-patch} Under \eqref{gapassump2equiv} and \eqref{eq:outgoing-patch-stability}, there exist $C,\beta,\mu>0$ such that, for all sufficiently large $R$ and all integers $1\leq N\leq R$,
\begin{equation}\label{eq:outgoing-patch-error}
\|\mathcal C(\omega_0)-\mathcal C_{N,R}^{\mathrm{out}}(\omega_0)\|_{\mathcal B(\ell^2(\mathcal I(\omega_0)))} \leq C\left(e^{-\beta N}+e^{-\mu R}\right),
\end{equation}
and hence, $\|\operatorname{Im}\mathcal C_{N,R}^{\mathrm{out}}(\omega_0)\| \leq C\left(e^{-\beta N}+e^{-\mu R}\right)$.
\end{theorem}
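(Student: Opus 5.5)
The plan is to mirror the proof of \cref{lem:dipatch}, replacing the Dirichlet patch problem by the outgoing variational problem on $\mathcal V_{\vect{j}',R}$. Fix $(\vect{j}',\ell')\in\mathcal I(\omega_0)$ and let $U_{\vect{j}',\ell'}$ be the global field from \eqref{eq:def-Vjl-updated}.

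The first step is to compare $U_{\vect{j}',\ell'}$ with $U^{R,\mathrm{out}}_{\vect{j}',\ell'}$. Both functions satisfy the Helmholtz equation in $\Omega_{\vect{j}'}^R$ and have the same Dirichlet data on the resonator boundaries in $\mathsf P_{\vect{j}'}^R$. The outgoing field, restricted to $\Omega_{\vect{j}'}^R$, additionally satisfies the exact transparent condition $\partial_{\nu_\Gamma}U^{R,\mathrm{out}}=\mathcal T^{\mathrm{out}}_{\vect{j}',R}(\omega_0)U^{R,\mathrm{out}}$ on $\Gamma^R_{\vect{j}'}$. This holds because the exterior of $\mathsf P^R_{\vect{j}'}$ contains no resonators of $\mathcal D^R_{\vect{j}'}$.

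Set $W:=U_{\vect{j}',\ell'}-U^{R,\mathrm{out}}_{\vect{j}',\ell'}$ on $\Omega^R_{\vect{j}'}$. Then $W\in\mathcal V_{\vect{j}',R}$, and Green's formula gives, for every test function $\phi\in\mathcal V_{\vect{j}',R}$,
\begin{equation*}
\langle\mathcal B^{\mathrm{out}}_{\vect{j}',R}W,\phi\rangle
=\Big\langle \partial_{\nu_\Gamma}U_{\vect{j}',\ell'}-\mathcal T^{\mathrm{out}}_{\vect{j}',R}(\omega_0)U_{\vect{j}',\ell'},\ \phi\Big\rangle_{\Gamma^R_{\vect{j}'}}.
\end{equation*}
The uniform trace inequality bounds the right-hand side by
\begin{equation*}
C\Big(\|\partial_{\nu_\Gamma}U_{\vect{j}',\ell'}\|_{H^{-1/2}(\Gamma^R_{\vect{j}'})}+\|\mathcal T^{\mathrm{out}}_{\vect{j}',R}\|\,\|U_{\vect{j}',\ell'}\|_{H^{1/2}(\Gamma^R_{\vect{j}'})}\Big)\|\phi\|_{H^1}.
\end{equation*}
By \eqref{field:est} and the uniform bound on $\mathcal T^{\mathrm{out}}$ in \eqref{eq:outgoing-patch-stability}, this is $\le Ce^{-\mu_0R}\|\phi\|_{H^1}$. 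The uniform bound on $(\mathcal B^{\mathrm{out}})^{-1}$ then yields $\|W\|_{H^1(\Omega^R_{\vect{j}'})}\le Ce^{-\mu_0R}$.

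The main care point is making sure the trace norms on $\Gamma^R$ are uniform in $R$. This is handled as in \cref{lem:dipatch}: the boundary is a union of lattice faces, and polynomial factors in $R$ are absorbed into the exponential rate.

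The second step converts this field estimate into a coefficient estimate. For $|\vect{j}-\vect{j}'|\le N\le R$, the resonator $D_{\vect{j}}$ lies inside the patch, at distance $\gtrsim cR$ from $\Gamma$ if $N$ is suitably smaller than $R$. In the worst case we only need $D_{\vect{j}}\subset\mathsf P^R_{\vect{j}'}$ with a fixed collar, which the box construction provides. The local normal-trace estimate for Helmholtz solutions near $\partial D_{\vect{j}}$ then gives
\begin{equation*}
\|\partial_\nu W\|_{H^{-1/2}(\partial D_{\vect{j}})}\le Ce^{-\mu_0R}.
\end{equation*}
Hence each retained coefficient of $\mathcal C^{\mathrm{out}}_{N,R}$ differs from the corresponding coefficient of $\mathcal C$ by at most $Ce^{-\mu_0R}$, uniformly in the indices, using $\sup\|u_{\vect{j},\ell}\|_{H^{1/2}}<\infty$ and $v_{\vect{j}}$ bounded.

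The third step assembles the operator bound. Split
\begin{equation*}
\mathcal C-\mathcal C^{\mathrm{out}}_{N,R}=\bigl(\mathcal C-\mathcal C^{\mathrm{tr}}_N\bigr)+\bigl(\mathcal C^{\mathrm{tr}}_N-\mathcal C^{\mathrm{out}}_{N,R}\bigr).
\end{equation*}
The first term is bounded by \cref{prop:exact-truncation}. The second term has at most $\mathcal O(N^d m_{\max})\le\mathcal O(R^d)$ nonzero entries per row and column, each of size $Ce^{-\mu_0R}$. Schur's test therefore bounds it by $CR^de^{-\mu_0R}\le Ce^{-\mu R}$ for $0<\mu<\mu_0$, which proves \eqref{eq:outgoing-patch-error}.

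For the consequence, recall that $\mathcal C(\omega_0)$ is self-adjoint by \cref{prop:cap-basic}. Hence $\operatorname{Im}\mathcal C^{\mathrm{out}}_{N,R}=\operatorname{Im}(\mathcal C^{\mathrm{out}}_{N,R}-\mathcal C)$, whose norm is at most $\|\mathcal C-\mathcal C^{\mathrm{out}}_{N,R}\|$. The imaginary-part bound follows immediately.
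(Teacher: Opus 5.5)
Your proposal is correct and follows the paper's proof essentially step by step. You form the outgoing-boundary residual $\partial_{\nu_\Gamma}U_{\vect{j}',\ell'}-\mathcal T^{\mathrm{out}}_{\vect{j}',R}(\omega_0)U_{\vect{j}',\ell'}$, bound it using \eqref{field:est} and the uniform DtN bound, invert $\mathcal B^{\mathrm{out}}_{\vect{j}',R}$, and then conclude with the local normal-trace estimate, Schur's test and the truncation tail of \cref{prop:exact-truncation}; your use of the self-adjointness of $\mathcal C(\omega_0)$ spells out the ``hence'' for $\operatorname{Im}\mathcal C^{\mathrm{out}}_{N,R}(\omega_0)$, which the paper leaves implicit.
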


\begin{proof}
For $(\vect{j}',\ell')\in\mathcal I(\omega_0)$, the restriction of the global field on $\Gamma_{\vect{j}'}^R$ has outgoing-boundary residual:
\begin{equation*}
\eta_{\vect{j}',\ell'}^R := \partial_{\nu_\Gamma} U_{\vect{j}',\ell'} -\mathcal T_{\vect{j}',R}^{\mathrm{out}}(\omega_0)U_{\vect{j}',\ell'}.
\end{equation*}
The two terms in the residual are bounded by \eqref{field:est} and the first bound in \eqref{eq:outgoing-patch-stability}. Hence,
\begin{equation*}
\|\eta_{\vect{j}',\ell'}^R\|_{H^{-1/2}(\Gamma_{\vect{j}'}^R)} \leq Ce^{-\mu_0R}.
\end{equation*}
The difference between $U_{\vect{j}',\ell'}$ and $U_{\vect{j}',\ell'}^{R,\mathrm{out}}$, restricted to the patch, vanishes on every resonator boundary and is the solution generated by this residual. The second bound in \eqref{eq:outgoing-patch-stability} therefore gives an $H^1(\Omega_{\vect{j}'}^R)$ error of order $e^{-\mu_0R}$. The normal-trace estimate and the same Schur-sum argument as in the proof of \cref{lem:dipatch}, together with the truncation tail in \cref{prop:exact-truncation}, yield \eqref{eq:outgoing-patch-error}. 
\end{proof}

\subsection{Complex-frequency stabilization}\label{subsec:complex-stabilization}

The real-frequency approximation in \Cref{subsec:real-frequency-patches} gives an exponential rate, but the uniform stability \eqref{eq:patch-stability} or \eqref{eq:outgoing-patch-stability} does not follow from the infinite-volume exterior gap alone and may deteriorate when a growing finite cluster develops a resonance near $\omega_0$. We now remove this additional real-frequency finite-cluster stability assumption by adding a vanishing complex absorption.

Let $z_0:=\omega_0^2$. By \eqref{gapassump2equiv}, we choose $r>0$ so that
\begin{equation}
\label{eq:analytic-capacitance-neighborhood}
    \overline{B(z_0,r)}\subset \left\{z\in\mathbb C:\frac{z}{v^2}\in \rho\bigl(-\Delta_{\mathrm{Dir}};\mathbb R^d\setminus\overline{\mathcal D}\bigr)\right\}.
\end{equation}
For $z\in B(z_0,r)$, define the auxiliary analytic family
\begin{equation} \label{eq:analytic-capacitance-family}
\mathcal C_{z_0}(z):=-\frac{1}{2\omega_0} M^{-1/2}G^*\mathcal T^zGM^{-1/2}.
\end{equation}
Then $\mathcal C_{z_0}(z_0)=\mathcal C(\omega_0)$, where $G$ and $M$ are given as in \eqref{def:GandM}. Choose $0<\gamma_0<r$ and let
\begin{equation} \label{eq:complex-shift}
z_\gamma:=z_0+\mathrm i\gamma, \qquad 0<\gamma\leq\gamma_0.
\end{equation}
The following observation shows that the Hermitian symmetrization reduces the regularization error from the first to the second order.

\begin{lemma} \label{lem:schwarz-reflection}
For $z,\overline z\in B(z_0,r)$, we have 
\begin{equation} \label{eq:schwarz-reflection}
\mathcal C_{z_0}(\overline z)=\mathcal C_{z_0}(z)^*,
\end{equation}
and 
\begin{equation} \label{eq:second-order-bias}
\left\|\mathcal C(\omega_0)-\frac12\left( \mathcal C_{z_0}(z_\gamma)+\mathcal C_{z_0}(z_\gamma)^* \right)\right\|_{\mathcal B(\ell^2(\mathcal I(\omega_0)))} = \mathcal{O} (\gamma^2). 
\end{equation}
\end{lemma}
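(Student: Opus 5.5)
The proof splits into two steps. First I derive the reflection identity for the Dirichlet-to-Neumann map. Then I expand the analytic family $\mathcal C_{z_0}$ in Taylor series around the real point $z_0$.

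\emph{Step 1: reflection identity.} It suffices to show that, for $z$ with $z/v^2\in\rho(-\Delta_{\mathrm{Dir}};\mathbb R^d\setminus\overline{\mathcal D})$, the adjoint of $\mathcal T^z$ with respect to the sesquilinear boundary pairing is $\mathcal T^{\overline z}$. Since $G$ and $M$ are $z$-independent and $M$ is real diagonal, \eqref{eq:analytic-capacitance-family} then gives \eqref{eq:schwarz-reflection} directly.

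To verify the adjoint identity, take $\phi,\psi\in H^{1/2}(\partial\mathcal D)$. Let $u$ be the finite-energy solution of \eqref{eq_d2n_def} at $z$ with trace $\phi$, and let $w$ be the solution at $\overline z$ with trace $\psi$. Green's identity in the exterior domain then gives
\begin{equation*}
\langle\mathcal T^z\phi,\psi\rangle=-\int_{\mathbb R^d\setminus\overline{\mathcal D}}\Big(\nabla u\cdot\nabla\overline w-\frac{z}{v^2}u\overline w\Big)\,\mathrm dx .
\end{equation*}
Exchanging the roles of $u$ and $w$ yields $\overline{\langle\mathcal T^{\overline z}\psi,\phi\rangle}$, which is the same expression, because $\overline{\overline z}=z$. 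The sign coming from the outward normal is the same in both computations.

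Green's identity holds here because both fields lie in $H^1$ of the unbounded exterior. I would justify this by testing the variational formulation, as in the construction behind \cref{prop_d2n_map}, rather than by integrating by parts on bounded domains. This also recovers self-adjointness for real $z$ as a special case.

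\emph{Step 2: second-order bias.} By \cref{prop_d2n_map}, $z\mapsto\mathcal C_{z_0}(z)$ is analytic on $B(z_0,r)$ with values in $\mathcal B(\ell^2(\mathcal I(\omega_0)))$. Since $G$ is bounded, it is uniformly bounded on the closed disk $\overline{B(z_0,r)}$. I write the Taylor expansion
\begin{equation*}
\mathcal C_{z_0}(z_\gamma)=\mathcal C(\omega_0)+\mathrm i\gamma\,\mathcal C_{z_0}'(z_0)+\mathcal R_\gamma,
\qquad \|\mathcal R_\gamma\|\leq C\gamma^2 .
\end{equation*}
The remainder bound follows from the Cauchy estimates on a disk of radius $r$, since $\gamma\leq\gamma_0<r$.

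Next I need $\mathcal C_{z_0}'(z_0)$ to be self-adjoint. Differentiate \eqref{eq:schwarz-reflection} along the real axis, where it reads $\mathcal C_{z_0}(x)=\mathcal C_{z_0}(x)^*$ for real $x$. Differentiating in the norm topology gives $\mathcal C_{z_0}'(z_0)^*=\mathcal C_{z_0}'(z_0)$.

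With this, $(\mathrm i\gamma\,\mathcal C_{z_0}'(z_0))^*=-\mathrm i\gamma\,\mathcal C_{z_0}'(z_0)$. Hence in $\frac12(\mathcal C_{z_0}(z_\gamma)+\mathcal C_{z_0}(z_\gamma)^*)$ the first-order terms cancel, and $\mathcal C(\omega_0)$ is self-adjoint by \cref{prop:cap-basic}. The quantity in \eqref{eq:second-order-bias} therefore equals $\frac12(\mathcal R_\gamma+\mathcal R_\gamma^*)$, whose norm is at most $C\gamma^2$.

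\emph{Main obstacle.} The only delicate point is the reflection identity in Step 1 for nonreal $z$ on an infinite, possibly aperiodic, exterior domain. There one must make sure that the pairing convention (anti-dual, sesquilinear) is used consistently. One must also check that the finite-energy solutions at $z$ and at $\overline z$ are the ones characterized by the same variational problem with conjugated spectral parameter. Once that is settled, the rest is routine analytic perturbation theory.
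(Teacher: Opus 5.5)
Your proof is correct and follows essentially the same route as the paper. The reflection identity comes from $(\mathcal T^z)^*=\mathcal T^{\overline z}$, which you justify via the exterior Green identity while the paper simply notes it. The $\mathcal O(\gamma^2)$ bound then follows from Taylor expansion with cancellation of the first-order term: the paper writes the symmetrization as $\frac12\bigl(\mathcal C_{z_0}(z_0+\mathrm i\gamma)+\mathcal C_{z_0}(z_0-\mathrm i\gamma)\bigr)$ so the odd terms cancel directly, whereas you deduce that $\mathcal C_{z_0}'(z_0)$ is self-adjoint, which is equivalent. Note that the difference actually equals $-\frac12(\mathcal R_\gamma+\mathcal R_\gamma^*)$, a harmless sign slip.
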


\begin{proof}
Noting $(\mathcal T^z)^*=\mathcal T^{\overline z}$, we have \eqref{eq:schwarz-reflection} from \eqref{eq:analytic-capacitance-family}. The map in \eqref{eq:analytic-capacitance-family} is analytic in the operator norm on a neighborhood of $z_0$. Then, Taylor's formula and \eqref{eq:schwarz-reflection} readily give
\begin{equation*}
\frac12\left(\mathcal C_{z_0}(z_0+\mathrm i\gamma) +\mathcal C_{z_0}(z_0-\mathrm i\gamma)\right) -\mathcal C_{z_0}(z_0) = \mathcal O_{\mathcal B(\ell^2)}(\gamma^2). \qedhere
\end{equation*}
\end{proof}

For a source site $\vect{j}'\in\Lambda$ and $R\geq1$, define the source-centered finite cluster and its exterior by
\begin{equation} \label{eq:finite-cluster-domain}
\begin{aligned}
\Lambda_{\vect{j}'}^R &:=\left\{\vect{k}\in\Lambda: |\vect{k}-\vect{j}'|\leq R\right\},\\
\mathcal D_{\vect{j}'}^{R,\mathrm{fc}} &:=\bigcup_{\vect{k}\in\Lambda_{\vect{j}'}^R}D_{\vect{k}},
\qquad
\Omega_{\vect{j}'}^{R,\mathrm{fc}} :=\mathbb R^d\setminus \overline{\mathcal D_{\vect{j}'}^{R,\mathrm{fc}}}.
\end{aligned}
\end{equation}
For $(\vect{j}',\ell')\in\mathcal I(\omega_0)$, let $U_{\vect{j}',\ell'}^\gamma\in H^1(\mathbb R^d\setminus\overline{\mathcal D})$ be the unique regularized global field satisfying
\begin{equation} \label{eq:global-complex-field}
\begin{cases}
\Big(\Delta+\dfrac{z_\gamma}{v^2}\Big)
U_{\vect{j}',\ell'}^\gamma=0
&\text{in }\mathbb R^d\setminus\overline{\mathcal D},\\
U_{\vect{j}',\ell'}^\gamma=u_{\vect{j}',\ell'}
&\text{on }\partial D_{\vect{j}'},\\
U_{\vect{j}',\ell'}^\gamma=0
&\text{on }\partial D_{\vect{k}},\quad \vect{k}\neq\vect{j}'.
\end{cases}
\end{equation}
For later use, we set $U_{\vect{j}',\ell'}^0:=U_{\vect{j}',\ell'}$.
Equivalently, the entries of the auxiliary operator are
\begin{equation*}
\bigl(\mathcal C_{z_0}(z_\gamma)\bigr)_{(\vect{j},\ell),(\vect{j}',\ell')}
=-\frac{v_{\vect{j}}v_{\vect{j}'}}{2\omega_0}
\Big\langle \left.\partial_\nu U_{\vect{j}',\ell'}^\gamma\right|_+, u_{\vect{j},\ell} \Big\rangle_{H^{-1/2}(\partial D_{\vect{j}}),H^{1/2}(\partial D_{\vect{j}})}.
\end{equation*}
The corresponding finite-cluster field
$U_{\vect{j}',\ell'}^{R,\gamma}\in
H^1(\Omega_{\vect{j}'}^{R,\mathrm{fc}})$ is the unique solution of
\begin{equation}
\label{eq:finite-cluster-complex-field}
\begin{cases}
\Big(\Delta+\dfrac{z_\gamma}{v^2}\Big)
U_{\vect{j}',\ell'}^{R,\gamma}=0
&\text{in }\Omega_{\vect{j}'}^{R,\mathrm{fc}},\\
U_{\vect{j}',\ell'}^{R,\gamma}=u_{\vect{j}',\ell'}
&\text{on }\partial D_{\vect{j}'},\\
U_{\vect{j}',\ell'}^{R,\gamma}=0
&\text{on }\partial D_{\vect{k}},\quad
\vect{k}\in\Lambda_{\vect{j}'}^R\setminus\{\vect{j}'\}.
\end{cases}
\end{equation}
Here, the $H^1$ regularity selects the exponentially decaying solution associated with the branch of $\sqrt{z_\gamma}$ having a positive imaginary part.

For integers $1\leq N\leq R$, define the complex finite-cluster operator on $\ell^2(\mathcal I(\omega_0))$ by
\begin{equation}
\label{eq:complex-finite-cluster-operator}
\bigl(\mathcal C_{N,R,\gamma}^{\mathrm{fc}}\bigr)_{(\vect{j},\ell),(\vect{j}',\ell')} :=
\begin{cases}
-\dfrac{v_{\vect{j}}v_{\vect{j}'}}{2\omega_0}
\Big\langle \left.\partial_\nu U_{\vect{j}',\ell'}^{R,\gamma}\right|_+,
u_{\vect{j},\ell}\Big\rangle_{H^{-1/2}(\partial D_{\vect{j}}),H^{1/2}(\partial D_{\vect{j}})},
&|\vect{j}-\vect{j}'|\leq N,\\[2mm]
0,&|\vect{j}-\vect{j}'|>N.
\end{cases}
\end{equation}
Its Hermitian symmetrization is
\begin{equation} \label{eq:hermitian-stabilized-operator}
\widehat{\mathcal C}_{N,R,\gamma} :=\frac12\left( \mathcal C_{N,R,\gamma}^{\mathrm{fc}} +(\mathcal C_{N,R,\gamma}^{\mathrm{fc}})^* \right).
\end{equation}
We first give the stability result from the complex absorption.

\begin{lemma} \label{lem:uniform-complex-stability}
There exists $C>0$, independent of $\vect{j}'$, $R$, and $0<\gamma\leq\gamma_0$, such that
\begin{equation}
\label{eq:uniform-complex-resolvent}
\Big\|\Big( -\Delta_{\mathrm{Dir},\Omega_{\vect{j}'}^{R,\mathrm{fc}}} -\frac{z_\gamma}{v^2}\Big)^{-1}\Big\|_{H^{-1}(\Omega_{\vect{j}'}^{R,\mathrm{fc}}) \to H_0^1(\Omega_{\vect{j}'}^{R,\mathrm{fc}})} \leq\frac{C}{\gamma}.
\end{equation}
\end{lemma}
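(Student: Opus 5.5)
The bound will follow from a direct energy argument for the sesquilinear form of the shifted Dirichlet problem. No geometric information about the cluster is needed, which is why the constant is uniform in $\vect{j}'$ and $R$. Write $\Omega:=\Omega_{\vect{j}'}^{R,\mathrm{fc}}$ and consider on $H_0^1(\Omega)$, equipped with its full $H^1$-norm, the form
\begin{equation*}
\mathfrak b_\gamma(u,w):=\int_{\Omega}\nabla u\cdot\nabla\overline w\,\mathrm dx-\frac{z_\gamma}{v^2}\int_\Omega u\overline w\,\mathrm dx .
\end{equation*}
This form is bounded, with $|\mathfrak b_\gamma(u,w)|\leq (1+(\omega_0^2+\gamma_0)/v^2)\|u\|_{H^1}\|w\|_{H^1}$, uniformly in $\Omega$. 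The operator $-\Delta_{\mathrm{Dir},\Omega}-z_\gamma/v^2:H_0^1(\Omega)\to H^{-1}(\Omega)$ is exactly the operator induced by $\mathfrak b_\gamma$. The plan is to prove that $\mathfrak b_\gamma$ is coercive in modulus, with constant of order $\gamma$, and then to apply the Lax--Milgram lemma.

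\emph{Step 1: coercivity.} For $u\in H_0^1(\Omega)$, taking real and imaginary parts of $\mathfrak b_\gamma(u,u)$ gives
\begin{equation*}
\operatorname{Re}\mathfrak b_\gamma(u,u)=\|\nabla u\|_{L^2}^2-\frac{\omega_0^2}{v^2}\|u\|_{L^2}^2,
\qquad
-\operatorname{Im}\mathfrak b_\gamma(u,u)=\frac{\gamma}{v^2}\|u\|_{L^2}^2 .
\end{equation*}
It follows that
\begin{equation*}
\|u\|_{H^1}^2=\|\nabla u\|_{L^2}^2+\|u\|_{L^2}^2
=\operatorname{Re}\mathfrak b_\gamma(u,u)-\frac{\omega_0^2+v^2}{\gamma}\operatorname{Im}\mathfrak b_\gamma(u,u).
\end{equation*}
Hence
\begin{equation*}
\|u\|_{H^1}^2\leq\Bigl(1+\frac{\omega_0^2+v^2}{\gamma}\Bigr)|\mathfrak b_\gamma(u,u)|\leq\frac{C}{\gamma}|\mathfrak b_\gamma(u,u)|,
\end{equation*}
where $C:=\gamma_0+\omega_0^2+v^2$ depends only on $\omega_0$, $v$ and $\gamma_0$, using $\gamma\leq\gamma_0$.

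\emph{Step 2: conclusion.} By Step 1, $|\mathfrak b_\gamma(u,u)|\geq (\gamma/C)\|u\|_{H^1}^2$. The complex form of the Lax--Milgram lemma therefore shows that the operator is an isomorphism $H_0^1(\Omega)\to H^{-1}(\Omega)$. For the bound, let $u$ solve the problem with right-hand side $f\in H^{-1}(\Omega)$. Then
\begin{equation*}
\frac{\gamma}{C}\|u\|_{H^1}^2\leq|\mathfrak b_\gamma(u,u)|=|\langle f,u\rangle|\leq\|f\|_{H^{-1}}\|u\|_{H^1},
\end{equation*}
which gives \eqref{eq:uniform-complex-resolvent}. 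This also confirms that the $H^1$ solution in \eqref{eq:finite-cluster-complex-field} is unique. Its existence follows by subtracting the uniformly bounded local lifting $\mathcal E_{\vect{j}'}u_{\vect{j}',\ell'}$, exactly as in the proof of \cref{thm:decay}.

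\emph{Main obstacle.} The only delicate point is uniformity. One has to check that no Poincar\'e inequality or other domain-dependent constant enters: $\Omega$ is unbounded, and its geometry changes with $R$. The identity in Step 1 uses only the definition of the full $H^1$-norm, so the constant is independent of $\vect{j}'$, $R$ and $\gamma$. Equivalently, one could argue by the spectral theorem. The operator $-\Delta_{\mathrm{Dir},\Omega}$ is nonnegative and self-adjoint, so $\operatorname{dist}(z_\gamma/v^2,[0,\infty))\geq\gamma/v^2$. Conjugating by $(I-\Delta_{\mathrm{Dir},\Omega})^{1/2}$, as in \cref{lem:complementary-invertibility}, then turns the bound into $\sup_{t\geq0}\,(1+t)/|t-z_\gamma/v^2|\lesssim\gamma^{-1}$.
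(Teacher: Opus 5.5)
Your proof is correct, but your main argument differs from the paper's. The paper only says that \eqref{eq:uniform-complex-resolvent} ``follows directly from the spectral calculus.'' That is the route you relegate to your final remark: conjugate by $(I-\Delta_{\mathrm{Dir},\Omega})^{1/2}$, as in \cref{lem:complementary-invertibility}, which reduces the $H^{-1}\to H_0^1$ norm to $\sup_{t\in\sigma(-\Delta_{\mathrm{Dir},\Omega})}(1+t)/|t-z_\gamma/v^2|$.

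Your main route is the energy identity $\|u\|_{H^1}^2=\operatorname{Re}\mathfrak b_\gamma(u,u)-\frac{\omega_0^2+v^2}{\gamma}\operatorname{Im}\mathfrak b_\gamma(u,u)$ followed by Lax--Milgram. This is more elementary and self-contained. It gives existence, uniqueness and the bound in one step, with the explicit constant $C=\gamma_0+\omega_0^2+v^2$. It also uses only the sign of $\operatorname{Im}\mathfrak b_\gamma(u,u)$, not self-adjointness or the functional calculus.

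The spectral route, in turn, computes the norm exactly. Since $\Omega$ is the exterior of a bounded cluster, $\sigma(-\Delta_{\mathrm{Dir},\Omega})=[0,\infty)$ contains $\omega_0^2/v^2$, and the supremum equals $\sqrt{(v^2+\omega_0^2)^2+\gamma^2}/\gamma$. This shows two things. First, the $\gamma^{-1}$ loss cannot be removed for these free-space clusters, which is why the paper must balance $\gamma^{-1}e^{-\mu R}$ against the $\gamma^2$ symmetrization bias. Second, your bound $(\gamma+\omega_0^2+v^2)/\gamma$ is essentially optimal.

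In both arguments the uniformity in $\vect{j}'$ and $R$ has the source you identify: only the full $H^1$ norm enters, never a Poincar\'e constant.
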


Note that the argument for \cref{thm:decay} also applies to $U_{\vect{j}',\ell'}^\gamma$, which gives the exponential decay uniformly in $\gamma$. The estimate follows directly from the spectral calculus, and we omit the proof. Moreover, since $\{z_\gamma/v^2: 0\leq\gamma\leq\gamma_0\}$ is a compact subset of the resolvent set of the exterior Dirichlet Laplacian, the Combes--Thomas and lifting arguments used in the proof of \cref{thm:decay} apply uniformly to $U_{\vect{j}',\ell'}^\gamma$. In particular, these fields decay exponentially away from their source resonators, with constants independent of $\gamma$.

\begin{lemma}
\label{lem:uniform-complex-localization}
There exist $C,\beta>0$, independent of $(\vect{j}',\ell')\in\mathcal I(\omega_0)$, $\vect{j}\in\Lambda$, and $0\leq\gamma\leq\gamma_0$, such that
\begin{equation}
\label{eq:uniform-complex-localization}
\Big\|\chi_{\vect{j}}U_{\vect{j}',\ell'}^\gamma\Big\|_{H^1(\mathbb R^d\setminus\overline{\mathcal D})} +\Big\|\partial_\nu U_{\vect{j}',\ell'}^\gamma\big|_+ \Big\|_{H^{-1/2}(\partial D_{\vect{j}})} \leq Ce^{-\beta|\vect{j}-\vect{j}'|}.
\end{equation}
\end{lemma}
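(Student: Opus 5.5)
The plan is to repeat the proof of \cref{thm:decay} with $k_0^2$ replaced by $k_\gamma^2:=z_\gamma/v^2$, and to make sure every constant is uniform in $0\leq\gamma\leq\gamma_0$. The one genuinely new ingredient is a version of \cref{lem:localized-CT} that is uniform in the spectral parameter. The set $K:=\{z_\gamma/v^2:0\leq\gamma\leq\gamma_0\}$ is a compact segment. By \eqref{eq:analytic-capacitance-neighborhood} and $\gamma_0<r$, it lies in $\rho(-\Delta_{\mathrm{Dir}};\mathbb R^d\setminus\overline{\mathcal D})$, so
\[
d_K:=\operatorname{dist}\bigl(K,\sigma(-\Delta_{\mathrm{Dir}};\mathbb R^d\setminus\overline{\mathcal D})\bigr)>0 .
\]
Since $-\Delta_{\mathrm{Dir}}$ is self-adjoint, the resolvent bound $\|(-\Delta_{\mathrm{Dir}}-k^2)^{-1}\|_{\mathcal B(H^{-1},H^1_0)}$ is controlled by $d_K^{-1}$ and $\sup_{K}|k^2|$, uniformly for $k^2\in K$.

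First, I will revisit the Combes--Thomas argument behind \cref{lem:localized-CT}. Conjugate by $e^{\eta\langle x-\vect{j}'\rangle}$, with $\langle\cdot\rangle$ a smoothed distance that has bounded gradient. The conjugated operator $-\Delta_{\mathrm{Dir}}-k^2$ differs from the unconjugated one by a perturbation of norm $\mathcal O(\eta)$ from $H^1_0$ to $H^{-1}$. Choose $\eta$ so that this perturbation is at most half the inverse of the uniform resolvent bound. A Neumann series then inverts the conjugated operator with a bound depending only on $d_K$ and $\sup_K|k^2|$. This gives
\[
\bigl\|\chi_{\vect{j}}(-\Delta_{\mathrm{Dir}}-z_\gamma/v^2)^{-1}\chi_{\vect{j}'}\bigr\|_{\mathcal B(H^{-1},H^1_0)}\leq Ce^{-\beta|\vect{j}-\vect{j}'|},
\]
with $C$ and $\beta=\eta$ independent of $\gamma\in[0,\gamma_0]$ and of $\vect{j},\vect{j}'$. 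I expect this step to need the most care, because the conjugation estimate must be checked to depend on $k^2$ only through $d_K$ and $|k^2|$. In particular, it must not rely on self-adjointness at the complex point $z_\gamma$, since $(-\Delta_{\mathrm{Dir}}-z_\gamma/v^2)$ is not self-adjoint there.

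Next comes the lifting. Use the same extensions $\mathcal E_{\vect{j}'}$ as in the proof of \cref{thm:decay}; they are independent of $\gamma$. Set
\[
f^\gamma_{\vect{j}',\ell'}:=(\Delta+z_\gamma/v^2)\mathcal E_{\vect{j}'}u_{\vect{j}',\ell'}.
\]
Then $f^\gamma_{\vect{j}',\ell'}$ is supported in $\{\chi_{\vect{j}'}=1\}$, and its $H^{-1}$ norm is bounded uniformly, since $|z_\gamma|\leq\omega_0^2+\gamma_0$. This gives the representation
\[
U^\gamma_{\vect{j}',\ell'}=\mathcal E_{\vect{j}'}u_{\vect{j}',\ell'}+(-\Delta_{\mathrm{Dir}}-z_\gamma/v^2)^{-1}f^\gamma_{\vect{j}',\ell'} .
\]
Because $f^\gamma_{\vect{j}',\ell'}=\chi_{\vect{j}'}f^\gamma_{\vect{j}',\ell'}$, we can apply the uniform Combes--Thomas bound. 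For $\vect{j}\neq\vect{j}'$ the lifting vanishes near $D_{\vect{j}}$, and we obtain $\|\chi_{\vect{j}}U^\gamma_{\vect{j}',\ell'}\|_{H^1}\leq Ce^{-\beta|\vect{j}-\vect{j}'|}$. For $\vect{j}=\vect{j}'$, the uniform resolvent bound and the uniform lifting bound give $\|U^\gamma_{\vect{j}',\ell'}\|_{H^1}\leq C$, which is the claimed estimate with exponent zero.

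Finally, the normal-trace estimate. The field $U^\gamma_{\vect{j}',\ell'}$ solves $(\Delta+z_\gamma/v^2)U=0$ in a neighborhood of $\partial D_{\vect{j}}$ contained in $\{\chi_{\vect{j}}=1\}$. Apply the variational definition of the conormal derivative with a test lifting supported in $\{\chi_{\vect{j}}=1\}$. This bounds $\|\partial_\nu U^\gamma|_+\|_{H^{-1/2}(\partial D_{\vect{j}})}$ by $C(1+|z_\gamma|)\|\chi_{\vect{j}}U^\gamma\|_{H^1}$. The constant is uniform in $\vect{j}$ because only finitely many resonator shapes occur, and uniform in $\gamma$. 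Adding the two bounds yields \eqref{eq:uniform-complex-localization}.
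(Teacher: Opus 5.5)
Your proposal is correct and takes the same route as the paper. The paper only remarks that $\{z_\gamma/v^2:0\le\gamma\le\gamma_0\}$ is a compact subset of the exterior Dirichlet resolvent set, so the Combes--Thomas bound of \cref{lem:localized-CT}, the lifting representation and the local normal-trace estimate from the proof of \cref{thm:decay} all hold with $\gamma$-independent constants; your write-up makes that uniformity explicit. The one routine point you leave implicit is to use a bounded Lipschitz weight, e.g.\ $\min(|x-\vect{j}'|,|\vect{j}-\vect{j}'|)$, so that conjugation by $e^{\pm\eta\phi}$ preserves $H^1_0$.
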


\begin{proposition} \label{prop:complex-field-comparison}
There exist $C,\mu_0>0$ such that, for all $(\vect{j}',\ell')\in\mathcal I(\omega_0)$, $R\geq1$, and $0<\gamma\leq\gamma_0$,
\begin{equation}
\label{eq:complex-field-comparison}
\Big\|U_{\vect{j}',\ell'}^{R,\gamma}
-\widetilde U_{\vect{j}',\ell'}^\gamma\Big\|_{
H^1(\Omega_{\vect{j}'}^{R,\mathrm{fc}})}
\leq C\gamma^{-1}e^{-\mu_0 R}.
\end{equation}
Here $\widetilde U_{\vect{j}',\ell'}^\gamma$ is obtained from the global field $U_{\vect{j}',\ell'}^\gamma$ by zero extension to $D_{\vect{k}}$ with $\vect{k}\notin\Lambda_{\vect{j}'}^R$.
\end{proposition}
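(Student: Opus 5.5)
We compare the two fields through their difference $E:=U_{\vect{j}',\ell'}^{R,\gamma}-\widetilde U_{\vect{j}',\ell'}^\gamma$ on $\Omega_{\vect{j}'}^{R,\mathrm{fc}}$. Note first that $\Omega_{\vect{j}'}^{R,\mathrm{fc}}\supset\mathbb R^d\setminus\overline{\mathcal D}$, and $\widetilde U^\gamma$ is defined on it by the zero extension. Both fields have the same trace $u_{\vect{j}',\ell'}$ on $\partial D_{\vect{j}'}$ and vanish on $\partial D_{\vect{k}}$ for $\vect{k}\in\Lambda_{\vect{j}'}^R\setminus\{\vect{j}'\}$. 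Hence $E$ has zero trace on $\partial\mathcal D_{\vect{j}'}^{R,\mathrm{fc}}$, but $\widetilde U^\gamma$ fails to solve the Helmholtz equation across the far resonators $\partial D_{\vect{k}}$, $\vect{k}\notin\Lambda_{\vect{j}'}^R$. There the zero extension produces a jump in the normal derivative, and possibly a trace mismatch if $u$ is regarded as a function on $\Omega^{R,\mathrm{fc}}$. Rather than working with distributional jumps, we would proceed by a cutoff so that everything stays in $H^1$.

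Concretely, we take a smooth cutoff $\theta_R$ with the following properties: $\theta_R=1$ on the union of cells $Y+\vect{k}$ with $|\vect{k}-\vect{j}'|\leq R/2$; $\theta_R=0$ on all cells with $|\vect{k}-\vect{j}'|\geq R-1$, in particular near every $D_{\vect{k}}$ with $\vect{k}\notin\Lambda_{\vect{j}'}^R$; and $\|\nabla\theta_R\|_\infty\lesssim1$. We then write
\[
E=\bigl(U^{R,\gamma}-\theta_R U^\gamma\bigr)-(1-\theta_R)\widetilde U^\gamma .
\]
The function $\theta_R U^\gamma$ belongs to $H^1(\Omega_{\vect{j}'}^{R,\mathrm{fc}})$ with the correct boundary traces, so $w:=U^{R,\gamma}-\theta_RU^\gamma\in H_0^1(\Omega_{\vect{j}'}^{R,\mathrm{fc}})$ solves
\[
\Big(-\Delta-\frac{z_\gamma}{v^2}\Big)w=F,\qquad F:=2\nabla\theta_R\cdot\nabla U^\gamma+(\Delta\theta_R)U^\gamma ,
\]
where $F$ is supported in the annulus $R/2\lesssim|x-\vect{j}'|\lesssim R$. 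By \cref{lem:uniform-complex-localization}, summed over the $\mathcal O(R^{d-1})$ or $\mathcal O(R^{d})$ cells of the annulus with uniform constants, we get $\|F\|_{H^{-1}}\lesssim R^{d}e^{-\beta R/2}$, uniformly in $0<\gamma\leq\gamma_0$. \Cref{lem:uniform-complex-stability} then gives $\|w\|_{H^1}\leq C\gamma^{-1}R^de^{-\beta R/2}$.

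For the second piece, $(1-\theta_R)\widetilde U^\gamma$ is supported where $|x-\vect{j}'|\gtrsim R/2$. Its $H^1$ norm (on the far resonators it is zero, and elsewhere it is a piece of $U^\gamma$) is bounded by summing $\|\chi_{\vect{k}}U^\gamma\|_{H^1}$ over $|\vect{k}-\vect{j}'|\geq R/2$. \Cref{lem:uniform-complex-localization} together with the shell-counting sum in the proof of \cref{prop:exact-truncation} bounds this by $Ce^{-\mu R}$. This step requires that the cutoffs $\chi_{\vect{k}}$, together with an interior partition of the cells, cover the exterior cells. If $\chi_{\vect{k}}$ only covers neighborhoods of $\partial D_{\vect{k}}$, we would instead apply the Combes--Thomas estimate of \cref{lem:localized-CT} (uniform in $\gamma$) with a cell-wise partition of unity to the resolvent representation \eqref{eq_resolvent_expression_cap}. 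Combining both pieces and absorbing the polynomial factors by decreasing the rate to some $\mu_0<\beta/2$ yields \eqref{eq:complex-field-comparison}; the $\gamma^{-1}$ factor appears only in the first piece.

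The main obstacle is the uniformity in $\gamma$ of the exponential decay of the global field, which is needed both for the source $F$ and for the tail. This rests on the fact that the Combes--Thomas constant for $-\Delta_{\mathrm{Dir}}-z/v^2$ is uniform for $z$ in the compact set $\{z_\gamma:0\leq\gamma\leq\gamma_0\}$ inside the resolvent set, which we take from \cref{lem:uniform-complex-localization}. A secondary point is checking that the zero extension is indeed in $H^1$ on the far resonators. This holds because $U^\gamma$ vanishes on $\partial D_{\vect{k}}$ for $\vect{k}\neq\vect{j}'$.
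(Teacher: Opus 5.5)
Your proof is correct, but it is organized differently from the paper's.

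The paper uses no cutoff. It takes the full difference $W^{R,\gamma}_{\vect{j}',\ell'}=U^{R,\gamma}_{\vect{j}',\ell'}-\widetilde U^\gamma_{\vect{j}',\ell'}\in H^1_0(\Omega^{R,\mathrm{fc}}_{\vect{j}'})$ and defines the residual by inserting $\widetilde U^\gamma_{\vect{j}',\ell'}$ into the sesquilinear form, as in \eqref{eq:omitted-boundary-residual}. The field $\widetilde U^\gamma_{\vect{j}',\ell'}$ vanishes in the omitted resonators and solves the Helmholtz equation in $\mathbb R^d\setminus\overline{\mathcal D}$, and test functions vanish on the retained boundaries. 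Integration by parts therefore leaves only the pairings of $\partial_\nu U^\gamma_{\vect{j}',\ell'}|_+$ with $\varphi$ on $\partial D_{\vect k}$, $\vect k\notin\Lambda^R_{\vect{j}'}$. These are exactly the normal-derivative jumps you chose to avoid. They are bounded by the second term of \eqref{eq:uniform-complex-localization}, a uniform trace inequality, and Cauchy--Schwarz over $\vect k$. One application of \cref{lem:uniform-complex-stability} then controls the whole difference on $\Omega^{R,\mathrm{fc}}_{\vect{j}'}$ at once, for every $R\geq1$, with no tail term.

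Your splitting instead produces an $L^2$ source supported in an annulus, plus a tail $(1-\theta_R)\widetilde U^\gamma_{\vect{j}',\ell'}$ estimated without the factor $\gamma^{-1}$. The final bound is the same. The cost is that both pieces need $H^1$ decay of $U^\gamma_{\vect{j}',\ell'}$ on entire exterior cells, and \cref{lem:uniform-complex-localization} does not literally provide this. The cutoffs $\chi_{\vect k}$ are supported in $\mathcal O_{\vect k}\Subset Y+\vect k$ and never reach the cell faces. Since the exterior domain is connected, the transition layer of $\theta_R$ necessarily crosses those faces. So the remedy you propose only for the tail is needed for $\|F\|_{H^{-1}}$ as well: rerun the Combes--Thomas argument with a cell-wise partition of unity, uniformly for $z_\gamma$ with $0\leq\gamma\leq\gamma_0$.

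This is standard, and the paper implicitly uses a bound of the same type for \eqref{field:est} on $\Gamma^R_{\vect{j}'}$, so it is not a gap. Still, the paper's route is more economical, because it uses only the boundary normal-trace decay already stated.

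Your construction of $\theta_R$ also presupposes that $R$ exceeds a fixed constant. For bounded $R$, the claim follows from the crude bounds $\|U^{R,\gamma}_{\vect{j}',\ell'}\|_{H^1}\lesssim\gamma^{-1}$ and $\|\widetilde U^\gamma_{\vect{j}',\ell'}\|_{H^1}\lesssim1$.
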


\begin{proof}
Noting $W_{\vect{j}',\ell'}^{R,\gamma} :=U_{\vect{j}',\ell'}^{R,\gamma} -\widetilde U_{\vect{j}',\ell'}^\gamma \in H_0^1(\Omega_{\vect{j}'}^{R,\mathrm{fc}})$, define the distribution $F_{\vect{j}',\ell'}^{R,\gamma}$ by 
\begin{equation} \label{eq:omitted-boundary-residual}
\begin{aligned}
\Big\langle F_{\vect{j}',\ell'}^{R,\gamma},\varphi\Big\rangle:={}&\int_{\Omega_{\vect{j}'}^{R,\mathrm{fc}}}\left(\nabla\widetilde U_{\vect{j}',\ell'}^\gamma\cdot\nabla\overline\varphi-\frac{z_\gamma}{v^2}\widetilde U_{\vect{j}',\ell'}^\gamma\overline\varphi\right)\,\mathrm dx,
\qquad \varphi\in H_0^1(\Omega_{\vect{j}'}^{R,\mathrm{fc}}).
\end{aligned}
\end{equation}
It is easy to see from the estimate \eqref{eq:uniform-complex-localization} and by integrating by parts that 
\begin{equation} \label{eq:residual-hminus1}
    \left\|F_{\vect{j}',\ell'}^{R,\gamma}\right\|_{ H^{-1}(\Omega_{\vect{j}'}^{R,\mathrm{fc}})} \leq C e^{-\mu_0 R}.
\end{equation}
Moreover, we have 
\begin{equation*}
\Big(-\Delta_{\mathrm{Dir},\Omega_{\vect{j}'}^{R,\mathrm{fc}}} -\frac{z_\gamma}{v^2}\Big)
W_{\vect{j}',\ell'}^{R,\gamma} = -F_{\vect{j}',\ell'}^{R,\gamma}.
\end{equation*}
Applying \cref{lem:uniform-complex-stability} and \eqref{eq:residual-hminus1} proves \eqref{eq:complex-field-comparison}.
\end{proof}

\begin{theorem}[Stabilized finite-cluster approximation] \label{thm:stabilized-finite-cluster}
Under \eqref{gapassump2equiv}, there exist
$C,\beta,\mu>0$ such that, for all integers $1\leq N\leq R$ and $0<\gamma\leq\gamma_0$,
\begin{equation}
\label{eq:complex-operator-error}
\left\|\mathcal C_{z_0}(z_\gamma)
-\mathcal C_{N,R,\gamma}^{\mathrm{fc}}\right\|_{
\mathcal B(\ell^2(\mathcal I(\omega_0)))}
\leq C\left(e^{-\beta N}+\gamma^{-1}e^{-\mu R}\right), 
\end{equation}
and hence
\begin{equation} \label{eq:stabilized-main-error}
\big\|\mathcal C(\omega_0)-\widehat{\mathcal C}_{N,R,\gamma}\big\|_{\mathcal B(\ell^2(\mathcal I(\omega_0)))} \leq C\left(\gamma^2+e^{-\beta N}+\gamma^{-1}e^{-\mu R}\right).
\end{equation}
Here $\mathcal C_{N,R,\gamma}^{\mathrm{fc}}$ and its symmetrization $\widehat{\mathcal C}_{N,R,\gamma}$ are given in \eqref{eq:complex-finite-cluster-operator} and \eqref{eq:hermitian-stabilized-operator}, respectively. 
\end{theorem}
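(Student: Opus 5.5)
The plan is to split the error in \eqref{eq:complex-operator-error} into a truncation part and a patch part, mirroring the proof of \cref{lem:dipatch}, and then to deduce \eqref{eq:stabilized-main-error} from \cref{lem:schwarz-reflection}. Let $\mathcal C_{z_0,N}^{\mathrm{tr}}(z_\gamma)$ denote the interaction truncation of $\mathcal C_{z_0}(z_\gamma)$ at range $N$, defined as in \eqref{eq:exact-truncation}. We write
\begin{equation*}
\mathcal C_{z_0}(z_\gamma)-\mathcal C_{N,R,\gamma}^{\mathrm{fc}}
=\bigl(\mathcal C_{z_0}(z_\gamma)-\mathcal C_{z_0,N}^{\mathrm{tr}}(z_\gamma)\bigr)
+\bigl(\mathcal C_{z_0,N}^{\mathrm{tr}}(z_\gamma)-\mathcal C_{N,R,\gamma}^{\mathrm{fc}}\bigr).
\end{equation*}

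For the first term, \cref{lem:uniform-complex-localization} gives $|(\mathcal C_{z_0}(z_\gamma))_{(\vect{j},\ell),(\vect{j}',\ell')}|\leq Ce^{-\beta|\vect{j}-\vect{j}'|}$. This bound holds uniformly in $0\leq\gamma\leq\gamma_0$, since $u_{\vect{j},\ell}$ is uniformly bounded in $H^{1/2}$ and $v_{\vect{j}}$ takes only finitely many values. The shell-counting and Schur-test argument of \cref{prop:exact-truncation} then bounds this term by $Ce^{-\beta' N}$, and we rename $\beta'$ as $\beta$.

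The second term has nonzero entries only for $|\vect{j}-\vect{j}'|\leq N\leq R$, so the target $D_{\vect{j}}$ lies in the cluster $\Lambda_{\vect{j}'}^R$. The entry difference is a pairing of $\partial_\nu(U^\gamma_{\vect{j}',\ell'}-U^{R,\gamma}_{\vect{j}',\ell'})|_+$ with $u_{\vect{j},\ell}$ on $\partial D_{\vect{j}}$. Near $\partial D_{\vect{j}}$ the zero extension $\widetilde U^\gamma$ coincides with $U^\gamma$. The difference therefore solves the homogeneous Helmholtz equation at $z_\gamma$ in a fixed neighborhood of $\partial D_{\vect{j}}$ and vanishes on the resonator boundaries there. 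The local normal-trace estimate, with constants uniform in $\gamma\leq\gamma_0$ and $\vect{j}$, bounds the $H^{-1/2}(\partial D_{\vect{j}})$ norm by a constant times the $H^1$ norm of the difference on that neighborhood. By \cref{prop:complex-field-comparison}, the latter is at most $C\gamma^{-1}e^{-\mu_0R}$.

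Each row and column contains at most $\mathcal O(N^d)\leq\mathcal O(R^d)$ nonzero blocks of size at most $m_{\max}$. Schur's test therefore gives $C R^d\gamma^{-1}e^{-\mu_0R}\leq C\gamma^{-1}e^{-\mu R}$ for any $0<\mu<\mu_0$. This proves \eqref{eq:complex-operator-error}.

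For \eqref{eq:stabilized-main-error}, taking adjoints preserves operator norms, so
\begin{equation*}
\Bigl\|\tfrac12\bigl(\mathcal C_{z_0}(z_\gamma)+\mathcal C_{z_0}(z_\gamma)^*\bigr)-\widehat{\mathcal C}_{N,R,\gamma}\Bigr\|\leq\bigl\|\mathcal C_{z_0}(z_\gamma)-\mathcal C_{N,R,\gamma}^{\mathrm{fc}}\bigr\|.
\end{equation*}
Combining this with \eqref{eq:second-order-bias} through the triangle inequality yields the $\gamma^2$ term and the conclusion.

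The main obstacle I expect is the uniformity of the local normal-trace step. The relevant neighborhood of $\partial D_{\vect{j}}$ must lie inside the finite-cluster exterior $\Omega^{R,\mathrm{fc}}_{\vect{j}'}$ and avoid the artificially removed resonators. This holds because $|\vect{j}-\vect{j}'|\leq N\leq R$ implies that all neighbors of $\vect{j}$ within distance one lie in the cluster, up to an adjustment at the cluster edge when $|\vect{j}-\vect{j}'|=R$. If that edge case causes trouble, I would shrink the neighborhood to $\operatorname{supp}\chi_{\vect{j}}\subset Y+\vect{j}$, which meets no other resonator, so the trace estimate is local to a single cell. A secondary point is that the $H^{-1}$ residual bound \eqref{eq:residual-hminus1} was already taken care of in \cref{prop:complex-field-comparison}.
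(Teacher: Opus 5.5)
Your proposal is correct and follows the paper's proof. Retained entries are bounded by \cref{prop:complex-field-comparison} with a local normal-trace estimate, the omitted tail by the $\gamma$-uniform decay of \cref{lem:uniform-complex-localization}, and Schur's test absorbs the $(1+N)^d\leq(1+R)^d$ count into $e^{-\mu R}$. Then \eqref{eq:stabilized-main-error} follows from \cref{lem:schwarz-reflection}, since Hermitian symmetrization does not increase the error norm. Your split into truncation and patch parts only reorganizes the paper's single Schur-test bound on the full error matrix $\mathcal C_{z_0}(z_\gamma)-\mathcal C_{N,R,\gamma}^{\mathrm{fc}}$.

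One small point: use your fallback neighborhood inside the single cell $Y+\vect{j}$ from the start. The claim that all resonators near $D_{\vect{j}}$ lie in the cluster can fail for any target near the cluster edge, not only when $|\vect{j}-\vect{j}'|=R$.
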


\begin{proof}
For retained row and column indices, the field estimate \eqref{eq:complex-field-comparison} and the uniform normal-trace estimate give an entrywise error bounded by $C\gamma^{-1}e^{-\mu_0R}$. The uniform complex-frequency version of \eqref{eq_cap_decay} gives an exponentially small tail for the omitted entries. Letting
$E_{N,R,\gamma}:=\mathcal C_{z_0}(z_\gamma)
-\mathcal C_{N,R,\gamma}^{\mathrm{fc}}$, the lattice growth yields
\begin{equation*}
\sup_{\vect{j}'}\#\{\vect{j}:|\vect{j}-\vect{j}'|\leq N\} +\sup_{\vect{j}}\#\{\vect{j}':|\vect{j}-\vect{j}'|\leq N\} \leq C(1+N)^d,
\end{equation*}
and, since $N\leq R$, 
\begin{equation*}
\sup_{(\vect{j}',\ell')}\sum_{(\vect{j},\ell)}
\left|(E_{N,R,\gamma})_{(\vect{j},\ell),(\vect{j}',\ell')}\right| +\sup_{(\vect{j},\ell)}\sum_{(\vect{j}',\ell')}
 \left|(E_{N,R,\gamma})_{(\vect{j},\ell),(\vect{j}',\ell')}\right| \leq C\left(e^{-\beta N}+\gamma^{-1}e^{-\mu R}\right),
\end{equation*}
for $0<\mu<\mu_0$. Again, Schur's test proves \eqref{eq:complex-operator-error}. The estimate \eqref{eq:stabilized-main-error} then follows from \cref{lem:schwarz-reflection}. The proof is complete. 
\end{proof}

\begin{corollary}[Exponential convergence of the regularized patch approximation] \label{cor:optimized-complex-shift}
Under the assumptions of \cref{thm:stabilized-finite-cluster}, 
for all sufficiently large $R$ and integers $1\leq N\leq R$, choose
\begin{equation} \label{eq:optimized-gamma}
\gamma_R:=e^{-\mu R/3}.
\end{equation}
Then, for some $C > 0$, 
\begin{equation} \label{eq:optimized-stabilized-error}
\big\|\mathcal C(\omega_0) -\widehat{\mathcal C}_{N,R,\gamma_R}\big\| \leq C\left(e^{-\beta N}+e^{-2\mu R/3}\right).
\end{equation}
\end{corollary}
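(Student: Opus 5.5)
This is a direct substitution into the stabilized estimate \eqref{eq:stabilized-main-error} of \cref{thm:stabilized-finite-cluster}. That estimate holds for every $0<\gamma\leq\gamma_0$ and all integers $1\leq N\leq R$, with constants $C,\beta,\mu$ independent of $\gamma$, $N$, $R$. So the only things to check are admissibility of $\gamma_R$ and the arithmetic of the exponents.

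\emph{Admissibility.} Since $\mu>0$, we have $\gamma_R=e^{-\mu R/3}\to0$ as $R\to\infty$. Hence there is $R_0$ (depending only on $\mu$ and $\gamma_0$) such that $0<\gamma_R\leq\gamma_0$ for all $R\geq R_0$. This is precisely what ``sufficiently large $R$'' means in the statement. Then \eqref{eq:stabilized-main-error} applies with $\gamma=\gamma_R$ for every integer $1\leq N\leq R$.

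\emph{Exponent arithmetic.} The three error terms become
\begin{equation*}
\gamma_R^2=e^{-2\mu R/3},\qquad e^{-\beta N},\qquad \gamma_R^{-1}e^{-\mu R}=e^{\mu R/3}e^{-\mu R}=e^{-2\mu R/3}.
\end{equation*}
The first and third terms are equal, so together they contribute at most $2e^{-2\mu R/3}$. Absorbing the factor $2$ into $C$ gives \eqref{eq:optimized-stabilized-error}.

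The exponent $1/3$ is exactly the choice that balances $\gamma^2$ against $\gamma^{-1}e^{-\mu R}$: solving $\gamma^3=e^{-\mu R}$ gives $\gamma=e^{-\mu R/3}$. I would add this as a remark, noting that it is the optimal rate obtainable from \eqref{eq:stabilized-main-error}, but it is not needed for the proof.

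There is no real obstacle here. The one point that needs care is that the constant $C$ in \cref{thm:stabilized-finite-cluster} must be uniform in $\gamma\in(0,\gamma_0]$. This uniformity comes from \cref{lem:uniform-complex-stability}, \cref{lem:uniform-complex-localization} and the $\mathcal O(\gamma^2)$ bound of \cref{lem:schwarz-reflection}, which is uniform on the compact segment $\{z_0+\mathrm i\gamma : 0\le\gamma\le\gamma_0\}\subset B(z_0,r)$. It is what allows $\gamma$ to be coupled to $R$ without the constant changing.
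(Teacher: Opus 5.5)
Your proposal is correct and matches the paper's argument, which the paper leaves implicit (the introduction calls it immediate): substitute $\gamma_R=e^{-\mu R/3}$ into \eqref{eq:stabilized-main-error}, use ``sufficiently large $R$'' to guarantee $\gamma_R\leq\gamma_0$, and note that $\gamma_R^2=\gamma_R^{-1}e^{-\mu R}=e^{-2\mu R/3}$. You also correctly point out that the constants in \cref{thm:stabilized-finite-cluster} are uniform in $\gamma\in(0,\gamma_0]$, which is exactly what allows $\gamma$ to be coupled to $R$.
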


\begin{remark} \label{rem:role-complex-shift}
We emphasize that the physical effective operator is $\mathcal C(\omega_0)$; the complex shift introduced in this section serves only as a regularization for its stable computation. 
For simplicity, the numerical experiments in \Cref{sec:numerical} instead use the unregularized outgoing fields at the physical frequency, corresponding formally to $\gamma=0$. When \eqref{eq:outgoing-patch-stability} holds, \cref{thm:outgoing-patch} yields the sharper error $e^{-\beta N}+e^{-\mu R}$. The stabilized construction provides a robust alternative
when the growing real-frequency patch problems fail to be uniformly stable or become poorly conditioned.
\end{remark}

\subsection{Error analysis and complexity estimates} \label{subsec:spectral-complexity}
We first derive the spectral approximation from the operator approximation \eqref{eq:stabilized-main-error}. Since both $\mathcal C(\omega_0)$ and $\widehat{\mathcal C}_{N,R,\gamma}$ are bounded self-adjoint operators, we have 
\begin{equation}\label{eq:spectral-hausdorff}
    d_{\mathrm H}\big(\sigma(\mathcal C(\omega_0)),\sigma(\widehat{\mathcal C}_{N,R,\gamma})\big) \leq C \left(\gamma^2+e^{-\beta N}+\gamma^{-1}e^{-\mu R}\right),
\end{equation}
where $d_{\mathrm H}$ denotes Hausdorff distance. In the compact case, both operators are finite-dimensional Hermitian matrices, and then Weyl's inequality gives the same bound for their ordered eigenvalues, counted with multiplicity. 

For a straight line defect, we choose clusters of the same shape centered at each source resonator, obtained from one another by translations along the defect. Then $\widehat{\mathcal C}_{N,R,\gamma}$ constructed above is a block Laurent operator of interaction range $N$. We denote its Floquet symbol by $\widehat{\mathcal C}_{N,R,\gamma}^\alpha$, from the Floquet transform. 
The direct-integral decompositions and \eqref{eq:stabilized-main-error} give the analogous estimate in the supremum over $\alpha$:
\begin{equation}\label{eq:fiber-stabilized-error}
    \sup_{\alpha\in\widetilde Y^*} \big\| \mathcal C^\alpha(\omega_0) - \widehat{\mathcal C}_{N,R,\gamma}^\alpha \big\| \leq C\left(\gamma^2+e^{-\beta N}+\gamma^{-1}e^{-\mu R}\right).
\end{equation}

We next combine these estimates with the first-order spectral asymptotics. In the compact-defect setting, fix an eigenvalue $\lambda$ of $\mathcal C(\omega_0)$ of multiplicity $m$, and let $\omega_{\delta,1},\dots,\omega_{\delta,m}$ be the associated defect eigenfrequencies from \cref{cor:compact-asymptotics}. Choose an open interval $I_\lambda$ whose closure meets $\sigma(\mathcal C(\omega_0))$ only at $\lambda$. Whenever the operator error in \eqref{eq:stabilized-main-error} is sufficiently small, Weyl's inequality shows that $\widehat{\mathcal C}_{N,R,\gamma}$ has exactly $m$ eigenvalues in $I_\lambda$, counted with multiplicity; we denote them by $\widehat\lambda_{1,N,R,\gamma} ,\dots,\widehat\lambda_{m,N,R,\gamma} $. For a line defect, let $\widehat\lambda_{k,N,R,\gamma}^\alpha$ denote the $k$th ordered eigenvalue of $\widehat{\mathcal C}_{N,R,\gamma}^\alpha$. Weyl's inequality together with \cref{cor:compact-asymptotics,cor:line-asymptotics} then yields
\begin{equation}\label{eq:end-to-end-compact}
\max_{1\leq k\leq m}\left|\omega_{\delta,k}-\left(\omega_0+\delta\widehat\lambda_{k,N,R,\gamma} \right)\right| \leq C\left(\delta+ \gamma^2+  e^{-\beta N}+ \gamma^{-1}e^{-\mu R}\right) \delta
\end{equation}
for the compact cluster whenever the approximate cluster above is defined, and, uniformly for integers $1\leq N\leq R$ and $0<\gamma\leq\gamma_0$,
\begin{equation}\label{eq:end-to-end-line}
\sup_{\alpha\in\widetilde Y^*}\max_{1\leq k\leq m_\star}\left|\omega_k^\alpha(\delta)-\left(\omega_0+\delta\widehat\lambda_{k,N,R,\gamma}^\alpha\right)\right| \leq C\left(\delta + \gamma^2 + e^{-\beta N} + \gamma^{-1}e^{-\mu R}\right) \delta
\end{equation}
for a straight line defect. The choice of $\gamma=\sqrt\delta$ gives the following result, by \eqref{eq:end-to-end-compact} and \eqref{eq:end-to-end-line}. It shows that the $\mathcal O(\delta^2)$ remainder in the first-order high-contrast expansions of the defect eigenfrequencies in \cref{cor:compact-asymptotics,cor:line-asymptotics} is preserved when both the interaction truncation radius and the patch radius are of order $|\log\delta|$.

\begin{corollary} \label{cor:delta-log-patch}
In the compact-defect setting, fix an eigenvalue $\lambda$ of $\mathcal C(\omega_0)$ of multiplicity $m$ and use the cluster notation introduced above. For all sufficiently small $\delta>0$, set $\gamma=\sqrt\delta$ and choose integers $N$ and $R$ satisfying
\begin{equation}\label{eq:NR-delta-choice}
    N\geq\frac{1}{\beta}|\log\delta|,
    \qquad
    R\geq\max\left\{N,\frac{3}{2\mu}|\log\delta|\right\}.
\end{equation}
Then, for the compact eigenvalue cluster associated with $\lambda$,
\begin{equation}\label{eq:delta2-compact}
    \max_{1\leq k\leq m}\left|\omega_{\delta,k}-\left(\omega_0+\delta\widehat\lambda_{k,N,R,\sqrt\delta} \right)\right| = \mathcal O(\delta^2),
\end{equation}
and for a straight line defect,
\begin{equation}\label{eq:delta2-line}
    \sup_{\alpha\in\widetilde Y^*}\max_{1\leq k\leq m_\star}\left|\omega_k^\alpha(\delta)-\left(\omega_0+\delta\widehat\lambda_{k,N,R,\sqrt\delta}^\alpha\right)\right| = \mathcal O(\delta^2).
\end{equation}
\end{corollary}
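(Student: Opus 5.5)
The plan is to plug the choices \eqref{eq:NR-delta-choice} into the end-to-end estimates \eqref{eq:end-to-end-compact} and \eqref{eq:end-to-end-line}, and check that each error term inside the bracket is $\mathcal O(\delta)$. Multiplied by the prefactor $\delta$, the total is then $\mathcal O(\delta^2)$. Before doing so, I must verify that these estimates apply: $\gamma=\sqrt\delta\leq\gamma_0$ for small $\delta$, and $1\leq N\leq R$ holds by construction. In the compact case I also need the approximate cluster $\widehat\lambda_{k,N,R,\sqrt\delta}$ to be well defined. This holds because the operator error in \eqref{eq:stabilized-main-error} tends to zero as $\delta\to0$, so Weyl's inequality places exactly $m$ eigenvalues of $\widehat{\mathcal C}_{N,R,\sqrt\delta}$ in $I_\lambda$ once $\delta$ is small.

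Now take the terms one at a time. First, $\gamma^2=\delta$ directly. Second, $N\geq\beta^{-1}|\log\delta|$ gives $e^{-\beta N}\leq e^{-|\log\delta|}=\delta$ for $\delta<1$. Third, for the last term, $R\geq\frac{3}{2\mu}|\log\delta|$ gives $e^{-\mu R}\leq\delta^{3/2}$, and hence
\begin{equation*}
\gamma^{-1}e^{-\mu R}\leq\delta^{-1/2}\delta^{3/2}=\delta.
\end{equation*}
The bracket is therefore at most $4\delta$, and \eqref{eq:end-to-end-compact} yields \eqref{eq:delta2-compact}.

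For the line defect the argument is the same, now using \eqref{eq:end-to-end-line}. That estimate is uniform in $\alpha$ and in $1\leq N\leq R$, $0<\gamma\leq\gamma_0$, so the same three term bounds give \eqref{eq:delta2-line} with a supremum over $\alpha\in\widetilde Y^*$. It relies on the translation-invariant choice of clusters, which makes $\widehat{\mathcal C}_{N,R,\gamma}$ a block Laurent operator, and on \eqref{eq:fiber-stabilized-error}.

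The only step needing care is confirming the hypotheses of the end-to-end estimates rather than the arithmetic. In the compact case this means the Weyl-type existence and labelling of the approximate cluster. I would handle it by noting that the right-hand side of \eqref{eq:stabilized-main-error} is at most $3C\delta$, which is below half the gap between $\lambda$ and $\partial I_\lambda$ for small $\delta$. The constants $\beta,\mu$ are those of \cref{thm:stabilized-finite-cluster}, and they are independent of $\delta$, so the logarithmic choices are legitimate.
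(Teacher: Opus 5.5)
Your proposal is correct and follows the paper's route: the paper obtains the corollary simply by substituting $\gamma=\sqrt\delta$ and the logarithmic choices of $N$ and $R$ into \eqref{eq:end-to-end-compact} and \eqref{eq:end-to-end-line}, which is exactly your term-by-term bound of the bracket by $4\delta$. Your extra checks ($\gamma\leq\gamma_0$, $1\leq N\leq R$, and Weyl's inequality ensuring the approximate cluster in $I_\lambda$ is defined for small $\delta$) make explicit hypotheses that the paper leaves implicit.
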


We finally quantify the computational complexity of a finite compression.
Define the set of resonant sites by
\begin{equation*}
\mathcal J(\omega_0):=\{\vect{j}\in\Lambda:m_{\vect{j}}>0\}.
\end{equation*}
Choose a finite computational subset $\mathcal J_{N_{\mathrm{act}}}\subset\mathcal J(\omega_0)$ with $|\mathcal J_{N_{\mathrm{act}}}|=N_{\mathrm{act}}$, and define the
corresponding resonant-mode index set by
\begin{equation*}
    \mathcal I_{N_{\mathrm{act}}}:=\{(\vect{j},\ell)\in\mathcal I(\omega_0):\vect{j}\in\mathcal J_{N_{\mathrm{act}}}\}.
\end{equation*}
For the family of computational subsets under consideration, assume that there exist $C>0$ and $d_{\mathrm{act}}\leq d$, independent of $N_{\mathrm{act}}$ and $s$, such that
\begin{equation}\label{eq:active-growth}
    \sup_{\vect{j}\in\Lambda} \#\left\{\vect{j}'\in\mathcal J_{N_{\mathrm{act}}}:
|\vect{j}-\vect{j}'|\leq s \right\} \leq C(1+s)^{d_{\mathrm{act}}}, \qquad s\geq0.
\end{equation}
Here $d_{\mathrm{act}}$ can be viewed as the intrinsic dimension of the active defect set, and this condition always holds with $d_{\mathrm{act}}=d$ because $\mathcal J_{N_{\mathrm{act}}}\subset\Lambda$ and $\Lambda$ has $d$-dimensional lattice growth. 
A smaller exponent may be used when the active sets satisfy the corresponding uniform lower-dimensional growth bound.

For a matrix $A$ indexed by $\mathcal I(\omega_0)$, write $A[\mathcal I_{N_{\mathrm{act}}}]$ for its principal submatrix indexed by $\mathcal I_{N_{\mathrm{act}}}$. Since both $\mathcal C_N^{\mathrm{tr}}(\omega_0)$ and $\widehat{\mathcal C}_{N,R,\gamma}$ have interaction range $N$, either principal submatrix contains at most
\begin{equation}\label{eq:nonzero-entry-count}
    \mathcal{O}\big(m_{\max}^2N_{\mathrm{act}}(1+N)^{d_{\mathrm{act}}}\big)
\end{equation}
nonzero scalar entries.  
For a given high contrast $\delta$, choose $N$ and $R$ proportional to $|\log\delta|$ so that \eqref{eq:NR-delta-choice} holds, and set $\gamma=\sqrt\delta$. Then,
\begin{equation}\label{eq:stabilized-nnz}
\#\Big(\widehat{\mathcal C}_{N,R,\sqrt\delta}[\mathcal I_{N_{\mathrm{act}}}]\Big) =\mathcal O\left(N_{\mathrm{act}}|\log\delta|^{d_{\mathrm{act}}}\right).
\end{equation}
Here and below, $\#(A)$ denotes the number of nonzero entries of a finite matrix $A$. 
For a finite segment of a straight or bent line satisfying the uniform one-dimensional growth bound \eqref{eq:active-growth}, one has $d_{\mathrm{act}}= 1$ and therefore
\begin{equation}\label{eq:line-nnz}
    \#\Big(\widehat{\mathcal C}_{N,R,\sqrt\delta}[\mathcal I_{N_{\mathrm{act}}}]\Big) =\mathcal O\left(N_{\mathrm{act}}|\log\delta|\right),
\end{equation}
instead of the $\mathcal O(N_{\mathrm{act}}^2)$ entries of a dense matrix. It follows that the storage and matrix--vector costs of the finite effective matrix $\widehat{\mathcal C}_{N,R,\sqrt\delta}[\mathcal I_{N_{\mathrm{act}}}]$ are linear in $N_{\mathrm{act}}$, up to a polylogarithmic factor.

While the matrix sparsity is controlled by the interaction radius $N$, each local PDE is posed on a patch containing $\mathcal O(R^d)=\mathcal O(|\log\delta|^d)$ resonators; its discretization-dependent solution cost is not included in the above estimate.

\section{Numerical results}\label{sec:numerical}

\subsection{Construction of the frequency-dependent capacitance operator and its tight-binding approximation}

We set $d=2$ and let $D$ be a disk of radius $r$. Its positive Neumann eigenvalues are $k^2$, where $k>0$ satisfies $J'_{\mathfrak m}(kr)=0$ for some multipole order $\mathfrak m$. The multipole order determines the type of mode (monopole, dipole, quadrupole, etc.) through the angular factors $e^{\pm\mathrm i\mathfrak m\theta}$. The radially symmetric modes $\mathfrak m=0$ have multiplicity one, $m=1$, whereas the higher-order modes $\mathfrak m\geq1$ have multiplicity two, $m=2$; see \cite{rohlederInequalitiesNeumannDirichlet2025}. 

The root index enumerates the positive radial roots and does not affect the multiplicity. In addition, the zero Neumann eigenvalue has the constant monopole mode on every connected resonator. This zero mode underlies the subwavelength regime and is present for every resonator shape, not only disks. At all frequencies, every resonator remains part of the exterior boundary-value problem. However, the effective capacitance operator is indexed only by the active modes in $\mathcal I(\omega_0)$; in the subwavelength regime, every connected resonator contributes its constant mode, whereas at a nonzero reference frequency, only resonators that are resonant near $\omega_0$ contribute active indices. 

\begin{table}[!h]
\centering
\begin{tabular}{|c|c|c|c|c|c|}
\hline
Roots & $J_0'(x)$ & $J_1'(x)$ & $J_2'(x)$ & $J_3'(x)$ & $J_4'(x)$ \\ \hline
1 & 3.8317 & 1.8412 & 3.0542 & 4.2012 & 5.3175 \\ \hline
2 & 7.0156 & 5.3314 & 6.7061 & 8.0152 & 9.2824 \\ \hline
3 & 10.1735 & 8.5363 & 9.9695 & 11.3459 & 12.6819 \\ \hline
4 & 13.3237 & 11.7060 & 13.1704 & 14.5858 & 15.9641 \\ \hline
\end{tabular}
\caption{The first few positive roots of $J'_{\mathfrak m}$ for $0\leq\mathfrak m\leq4$.}
\label{table_bessel}
\end{table} 

We choose the resonant parameters to place the defect frequencies in a gap of the projected bulk spectrum of the crystal. This means that the unperturbed crystal resonances will be of the order $\mathcal{O}(1)$ away from the defect frequencies; the sampled projected spectrum below provides a numerical check of this condition. Only the defect resonators are active at the chosen interior Neumann frequency. If a finite configuration contains $N_{\mathrm{act}}$ active resonators, each of multiplicity $m$, then its capacitance matrix has size $mN_{\mathrm{act}}\times mN_{\mathrm{act}}$. For an infinite configuration with the same uniform multiplicity, the capacitance operator acts on $\ell^2(\mathcal J(\omega_0);\mathbb C^m)$ and has exponentially decaying $m\times m$ blocks.

We consider the first two non-subwavelength regimes: the dipole and quadrupole resonances. These correspond to $\mathfrak m=1$ and $\mathfrak m=2$, respectively, and therefore have eigenvalue multiplicity $m=2$. In the numerical experiments, there is one defect resonator per unit cell. For a line waveguide, $\mathcal C^\alpha(\omega_0)$ is consequently a $2\times2$ Floquet symbol acting on the two active modes in the reference strip (see \Cref{fig:patch}) and incorporating all longitudinal interactions through its $\alpha$-dependence.

\begin{figure}[!h]
    \centering
\begin{tikzpicture}[
    scale=0.8,
    transform shape,
    dot/.style={
        circle,
        draw,
        minimum size=10mm,
        text width=9mm,   
        align=center,
        inner sep=0pt,
        font=\scriptsize
    }
]

\foreach \x in {-5,-4,4,5,6} {
    \foreach \y in {-3,...,3} {
        \ifnum\y=0
            \node[dot,
                draw=red!14,
                fill=red!16
            ] at (1.25*\x,1.25*\y) {};
        \else
            \node[dot,
                draw=Cyan!19,
                fill=Cyan!13
            ] at (1.25*\x,1.25*\y) {};
        \fi
    }
}

\foreach \x in {-5,...,6} {
    \foreach \y in {-4, 4} {
        \node[
            dot,
            draw=Cyan!19,
            fill=Cyan!13
        ] at (1.25*\x,1.25*\y) {};
    }
}

\foreach \x in {-3,...,3} {
    \foreach \y in {-3,...,3} {

        \pgfmathtruncatemacro{\outer}{(\x==-3) || (\x==3) || (\y==-3) || (\y==3)}

        \ifnum\outer=1
            \def\borderstyle{dashed}
        \else
            \def\borderstyle{}
        \fi

        \pgfmathtruncatemacro{\col}{\x+3}
        \pgfmathtruncatemacro{\row}{\y+3}

        \ifnum\y=0
            \node[dot,fill=red!43,\borderstyle] (c\col\row) at (1.25*\x,1.25*\y)
                {${\x},{\y}$};
        \else
            \node[dot,fill=Cerulean!30,\borderstyle] (c\col\row) at (1.25*\x,1.25*\y)
                {${\x},{\y}$};
        \fi
    }
}


\draw[
    decorate,
    decoration={brace,amplitude=6pt}
]
(-4.35,-4.25) -- (-4.35,-0.70)
node[midway,left=9pt,align=center] {cladding \\ $N_\mathrm{clad} = 3$};

\draw[
    decorate,
    decoration={brace,amplitude=6pt}
]
(-4.35,-0.5) -- (-4.35,0.50)
node[midway,left=9pt] {defect};

\draw[
    decorate,
    decoration={brace,amplitude=6pt}
]
(-4.35,0.70) -- (-4.35,4.25)
node[midway,left=9pt,align=center] {cladding \\ $N_\mathrm{clad} = 3$};

\draw[->,thick,shorten >=4pt]

    (5,3.75) node[right, align=center] {outer fringe layer \\ $N_\mathrm{fringe} = 1$}

    -- (c66);

\draw[->,thick,shorten >=4pt]

    (5,0) node[right, align=center] {$2N + 1$ defect \\ resonators, \\ $N = 3$}

    -- (c63);




\foreach \y [count=\k from 0] in {-3,...,3} {
    \pgfmathtruncatemacro{\xx}{3-\k}
    \ifnum\xx=0
        \node[dot,fill=red!43]  (copy\k) at (1.75*6,1.25*\xx + 0.)
        {$0,{\xx}$};
    \else
        \node[dot,fill=Cerulean!30]  (copy\k) at (1.75*6,1.25*\xx + 0.)
        {$0,{\xx}$};
    \fi
}

\node[dot,
    draw=Cyan!19,
    fill=Cyan!13
] at (1.75*6,1.25*4 + 0.) {};

\node[dot,
    draw=Cyan!19,
    fill=Cyan!13
] at (1.75*6,-1.25*4 + 0.) {};


\node[below=9mm of copy6, align=center] {reference strip};
\node[below=9mm of c30, align=center] {2D crystal};

\draw[dashed]
  ($(copy0.east)!0.5!(copy0.east)+(1mm,13mm)$) --
  ($(copy6.east)!0.5!(copy6.east)+(1mm,-13mm)$);

\draw[dashed]
  ($(copy0.west)!0.5!(copy0.west)+(-1mm,13mm)$) --
  ($(copy6.west)!0.5!(copy6.west)+(-1mm,-13mm)$);

\node[rotate=90] at ($(copy4.west)+(-4mm,13mm)$) {\textsc{QBC}};
\node[rotate=90] at ($(copy4.east)+(4mm,13mm)$) {\textsc{QBC}};


    \draw[->] (6.4,-4.7) -- (7.4,-4.7) node[right] {\footnotesize $e_1$};
    \draw[->] (6.4,-4.7) -- (6.4,-3.7) node[above] {\footnotesize $e_2$};

\end{tikzpicture}
\caption{Selection of a patch from a crystal with a horizontal line defect (in the $e_1$ direction). The parameter $N_{\mathrm{fringe}}$ is the thickness of the boundary fringe; these resonators are included in the boundary-integral calculation but omitted from the capacitance matrix. On the right is the reference strip: the infinite strip centered at $j_1=0$, with quasi-periodic boundary conditions (QBC) on its sides. As for the rectangular patch, a vertical finite-cladding approximation can be chosen in the reference strip. Since it has no edge defect resonators, no fringe rows are needed: it contains one defect resonator and $N_{\mathrm{clad}}$ cladding resonators on each side.}\label{fig:patch}
\end{figure}

\setlength{\fboxsep}{0pt}

\newcommand{\blk}{%
  \colorbox{Cerulean!55}{\phantom{\rule{4mm}{4mm}}}%
}

\newcommand{\redblk}{%
  \colorbox{red!57}{\phantom{\rule{4mm}{4mm}}}%
}

With the numbering convention in \Cref{fig:patch}, the numerical boundary-integral system contains both the inactive cladding resonators (blue) and the active defect resonators (red). All of them are needed to compute the exterior fields. The effective capacitance operator $\mathcal C$, however, is already compressed to the active index set $\mathcal I(\omega_0)$; the blue resonators therefore do not contribute rows or columns to $\mathcal C$. They are auxiliary unknowns in the unreduced boundary system, not zero blocks of the effective operator.

For a finite patch, the same compression is performed after solving the boundary-integral system. The fringe and cladding resonators remain in that system but are omitted from the effective matrix. In the patch shown in \Cref{fig:patch}, the five retained active resonators consequently produce a $5\times5$ block capacitance matrix, as illustrated in \Cref{fig:blockform}.

\begin{figure}[!h]
\[
\mathsf A_N \sim
\begin{pmatrix}
\outerddots & \outervdots & \outervdots & \outervdots & \outervdots & \outervdots & \outervdots & \outervdots & \outeriddots \\

\outercdots & \blk & \blk & \blk & \blk & \blk & \blk & \blk & \outercdots \\

\outercdots & \blk & \innerddots & \innervdots & \innervdots & \innervdots & \inneriddots & \blk & \outercdots \\

\outercdots & \blk & \innercdots & \redblk & \redblk & \redblk & \innercdots & \blk & \outercdots \\

\outercdots & \blk & \innercdots & \redblk & \redblk & \redblk & \innercdots & \blk & \outercdots \\

\outercdots & \blk & \innercdots & \redblk & \redblk & \redblk & \innercdots & \blk & \outercdots \\

\outercdots & \blk & \inneriddots & \innervdots & \innervdots & \innervdots & \innerddots & \blk & \outercdots \\

\outercdots & \blk & \blk & \blk & \blk & \blk & \blk & \blk & \outercdots \\

\outeriddots & \outervdots & \outervdots & \outervdots & \outervdots & \outervdots & \outervdots & \outervdots & \outerddots
\end{pmatrix} \longrightarrow \mathcal C_N^{\mathrm{out}} \sim \begin{pmatrix}
\redblk & \redblk & \redblk & \redblk & \redblk \\
\redblk & \redblk & \redblk & \redblk & \redblk \\
\redblk & \redblk & \redblk & \redblk & \redblk \\
\redblk & \redblk & \redblk & \redblk & \redblk \\
\redblk & \redblk & \redblk & \redblk & \redblk
\end{pmatrix}.
\]
    \centering
    \caption{Schematic unreduced boundary-integral system and the outgoing active-mode capacitance matrix obtained from the patch in \Cref{fig:patch}. Each colored block is of size $m \times m$.}
    \label{fig:blockform}
\end{figure}

To compute the quasi-periodic capacitance matrix, we use the supercell method \cite{sofiane}: we truncate the strip around the central defect resonator and keep the $N_\text{clad}$ crystal or ``cladding'' resonators on each side and compute the quasi-periodic capacitance matrix using \cref{def:freq-cap-QP-line} and quasi-periodic layer potential techniques; see \cite{cbms}. To compute the frequency-dependent capacitance operator, it is enough to compute the quasi-periodic capacitance matrix $\mathcal C^\alpha$: the transformation from $\mathcal C^\alpha$ to $\mathcal C$ is performed using an inverse fast Fourier transform (IFFT) for each horizontal shift $j_1 - j_1^\prime$; see \eqref{inversesymbolLN}.

The fields $U_{\vect{j},\ell}$ for $(\vect{j},\ell)\in \mathcal I(\omega_0)$ are computed using layer-potential techniques. The frequency-dependent capacitance matrix is assembled accordingly using \eqref{eq:def-freq-cap-operator-entry}.

We will be using the following parameters for all computations with the radii and interior speeds noted in the corresponding subsections:

\begin{itemize}
    \item number of quadrature points per resonator $N_\mathrm{pts} = 64$;
    \item quasimomentum discretization $N_\alpha = 31$;
    \item number of projected quasimomenta $N_{\alpha_2} = 13$;
    \item contrast parameter $\delta = 0.001$;
    \item speed in the background medium $v = 1$;
    \item reference resonator in the plots $(j_1, j_2) = (0,0).$
\end{itemize}



\subsection{Line waveguide}

First, we compute the bulk bands of the unperturbed crystal. This will indicate where the band gaps are and allow us to pick suitable material parameters and radii of the defects that place the defect frequencies in these band gaps. The crystal is periodic in two directions $e_1$ and $e_2$; however, adding the line defect will break the $e_2$-periodicity. For this reason, for each $\alpha_1$,  we sample $N_{\alpha_2}$ values of $\alpha_2$ and project the two-dimensional bulk dispersion onto the $\alpha_1$-axis (from $0$ to $2\pi$) by plotting the union of the corresponding bulk frequencies. This ensures that the defect frequencies are in the band gaps for all $\alpha_2$. We plot the bands from $0$ to $\pi$ in \Cref{fig:all_bandsv} and \Cref{fig:all_bands} since the plots are symmetric around $\pi$ due to the inversion symmetry of the lattice.

Using the approach in \cite{ammariSubwavelengthGuidedModes2021}, which is based on finding the characteristic values of the operator $\mathcal{M}$ in \eqref{eq:Mdensity} or \eqref{eq:MdensityLN}, we compute the reference defect bands. The bands are colored differently to highlight the fact that the initial guess was based on the dipole and quadrupole resonances in \Cref{table_bessel}. Red corresponds to the dipole $\mathfrak m = 1$, and purple to the quadrupole $\mathfrak m = 2$ initial guess.

When the radii are different, following \cite{cbms,ammariSubwavelengthGuidedModes2021,erik2019}, we use a fictitious source method. We create a replacement circular resonator of the unperturbed crystal inclusion radius $R$ but with modified boundary sources such that the fields created by this resonator and the circular defect resonator of radius $\widetilde{R}$ match inside the defect and outside the inclusion of radius $R$. When they are the same, we just set $R = \widetilde R$ and use different material parameters, as explained below.

The capacitance-based defect bands are obtained by computing and plotting \eqref{asymp:line} for each quasi-momentum using the quasi-periodic capacitance matrix.

Next, we look at the convergence of the patch capacitance matrix entries and compare them with the entries of the inverse Floquet--Bloch transform of $\mathcal C^\alpha$. Since each ``entry'' is a $2\times 2$ matrix, we look at the Frobenius norms. In the case of one participating defect resonator per period, the interacting resonator index always has the form $(j_1, 0)$ as the defects lie on a line. That leaves only two parameters $j_1$ and $j_1^\prime$ for the capacitance operator entries. Note that the periodicity of the problem ensures that the interaction between $(j_1 + 1,0)$ and $(j_1^\prime + 1, 0)$ is the same as the interaction between $(j_1,0)$ and $(j_1^\prime, 0)$; that is, the entries depend only on the distance $j_1 - j_1^\prime$ between the two interacting defect resonators. Thus, $\mathcal C$ is a block Laurent operator and, by \cref{thm:decay}, is exponentially approximable by banded operators.

In \Cref{fig:patch_vs_floquetv} and \Cref{fig:patch_vs_floquet}, a series of $(2(N-N_\text{fringe})+1)\times (2N_\text{clad} + 1)$ patches is created with the middle resonators being the line defect. The patches are finite with free-space boundary conditions and use the free-space boundary integral operators, as explained in the previous section. The figures show the dependence on $N$ and $N_\text{fringe}$ and compare the results against the inverse Floquet--Bloch transform of the quasi-periodic capacitance matrix $\mathcal C^\alpha$. Since all of these matrices represent the same object, these norms should overlap and show the same rate of exponential decay.

\subsubsection{Waveguiding by detuning the material parameters}
\label{linemat}

Let the radii and the speeds be such that

\begin{itemize}
\item the crystal resonator radius $R = \widetilde{R} = 0.350$;
    \item the speed in the unperturbed resonators $v_b = 1$;
    \item the speed in the defected resonators $\widetilde v_b = 0.350 / 0.455$.
\end{itemize}

This places the (quasi-periodic) defect frequencies deep in the band gaps of the bulk crystal. The defect bands using \cite{ammariSubwavelengthGuidedModes2021} for the dipole and quadrupole are shown as red and purple circles in \Cref{fig:all_bandsv}. The results obtained using the capacitance matrix are overlaid as black points. The two methods show excellent agreement, and, as predicted, we see two bands for both.

The patches are all created with 1 fringe resonator on each side. The reference IFFT was computed using a quasi-periodic frequency-dependent capacitance matrix $\mathcal C^\alpha$ with 6 cladding resonators. We vary the cladding and the number of resonators with the same cladding. In both the dipole and the quadrupole case, we get a near-perfect overlap up to $10^{-10}$. The circles are plotted so that the largest patches ($N = 10$) are at the bottom and the smallest ($N = 2$) at the top. In this way, we can see the last few norms for each $N.$ Some norms are entirely occluded, like $N = 6$ with $N_\text{clad} = 3$ because $N = 6$ with $N_\text{clad} = 2$ overlaps them perfectly. The more resonators we have, the lower the error floor gets, until we reach machine precision, as can be seen in the dipole decay plot \Cref{fig:dip_decayv}. The quadrupole decay plot in \Cref{fig:quad_decayv} has a slower convergence rate and reaches $10^{-13}$ with this configuration. In both cases, the entry norms display clear exponential decay and near-perfect overlap. 

\begin{figure}[htbp]
    \centering
    \begin{minipage}[b]{0.5\textwidth}
        \centering
        \includegraphics[width=\linewidth]{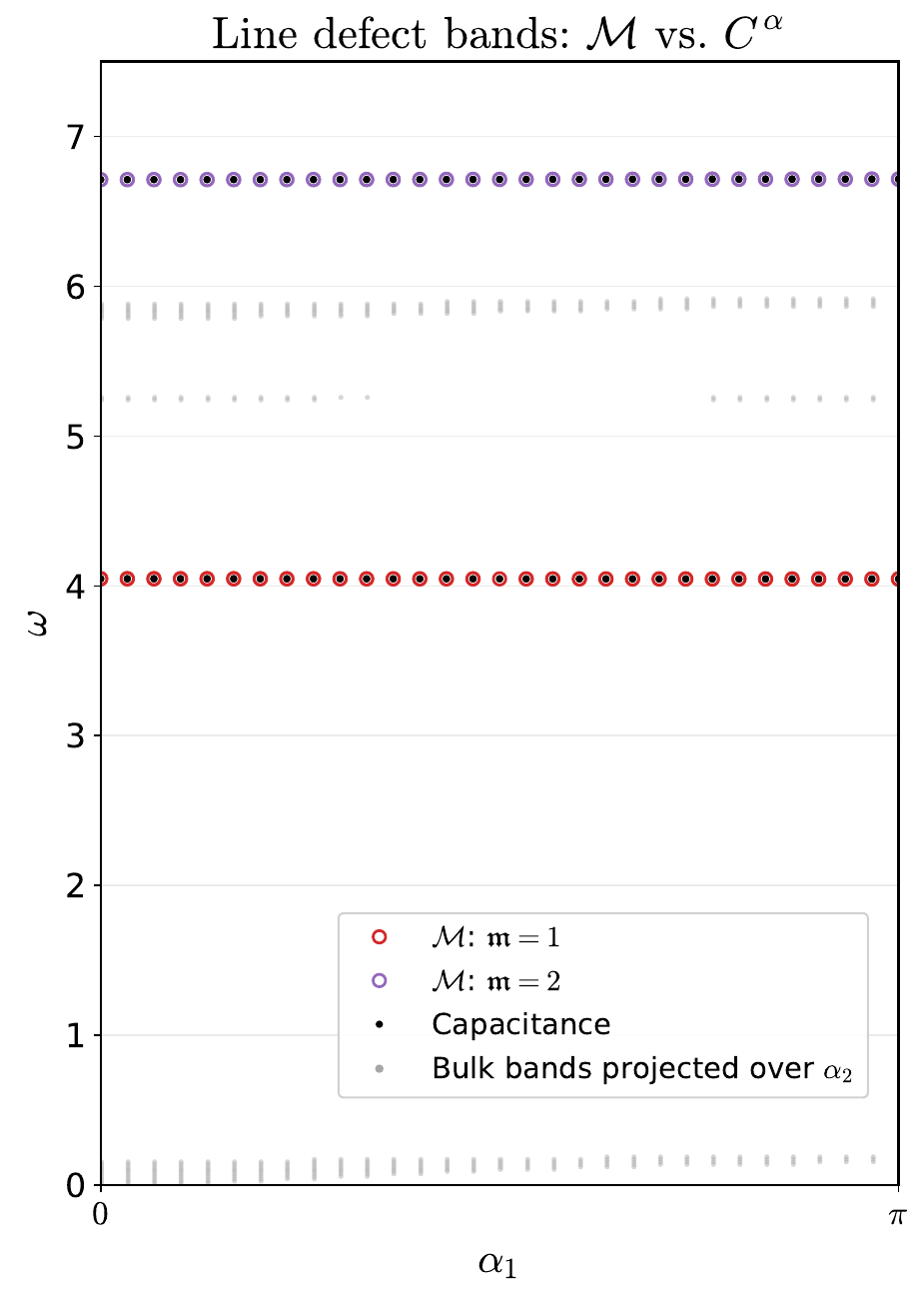}
        \label{fig:bandsv}
    \end{minipage}
    \hfill
    \raisebox{0.5cm}{%
    \begin{minipage}[b]{0.48\textwidth}
        \centering
        \includegraphics[width=\linewidth]{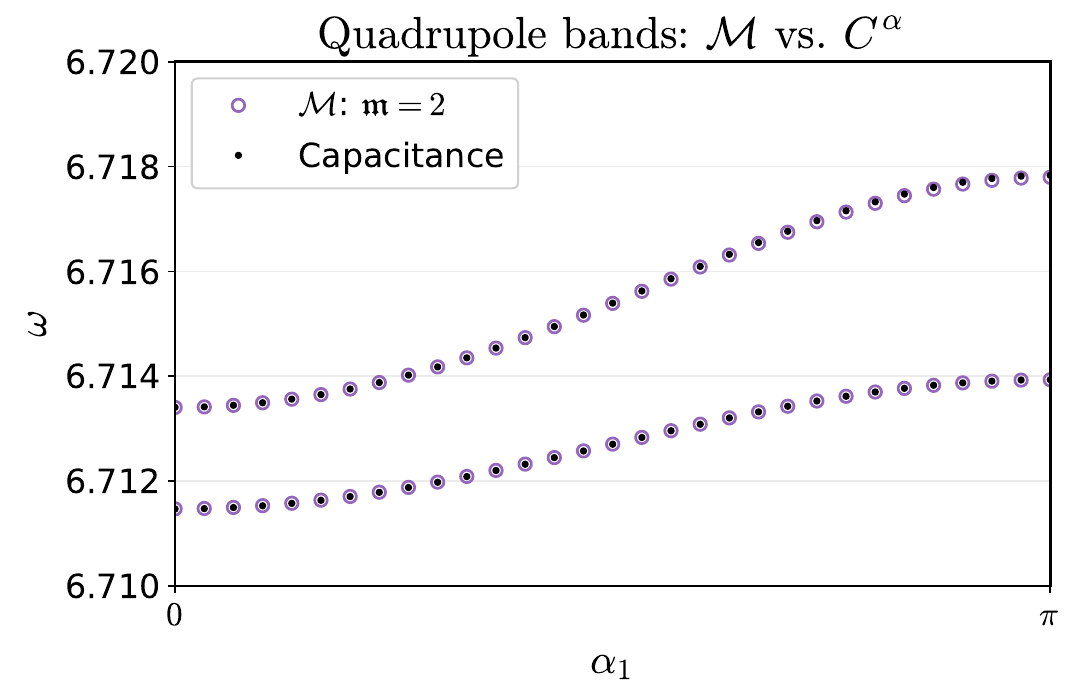}
        \label{fig:quadv}

        \vspace{-0.3cm}

        \includegraphics[width=\linewidth]{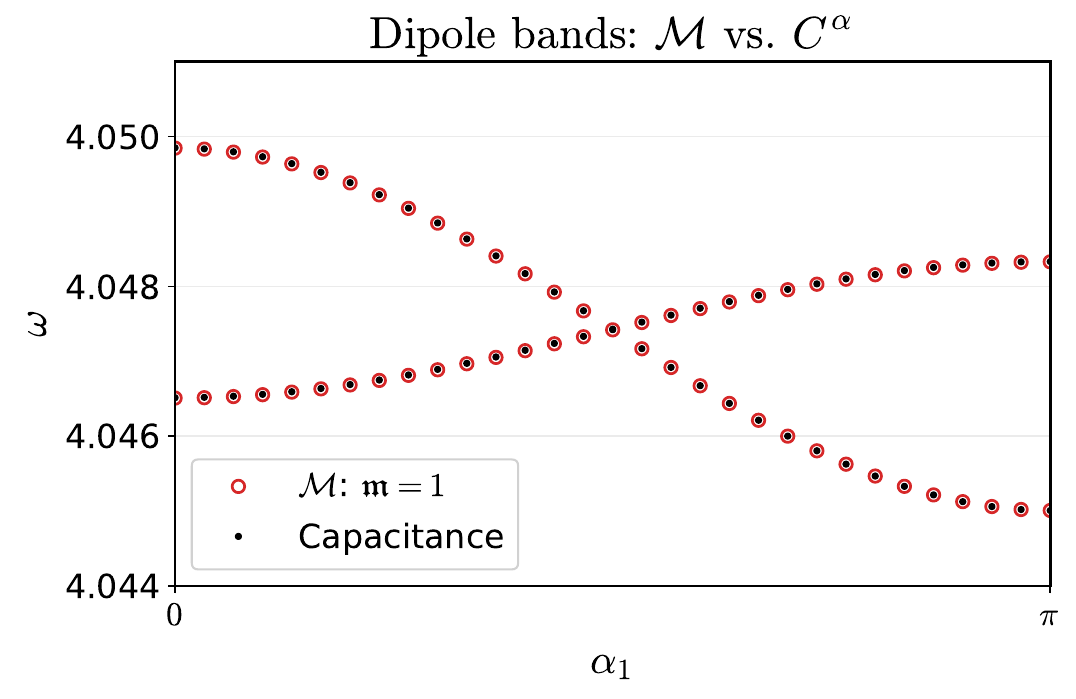}
        \label{fig:dipv}
    \end{minipage}%
    }
    \caption{Bulk bands and non-subwavelength defect bands for defected resonators of different material parameters with $\alpha_1$ from $0$ to $\pi$, with the dipole and quadrupole bands zoomed in.}
    \label{fig:all_bandsv}
\end{figure}

\begin{figure}[!h]
        \centering
        \begin{subfigure}{0.8\textwidth}
        \includegraphics[width=\linewidth]{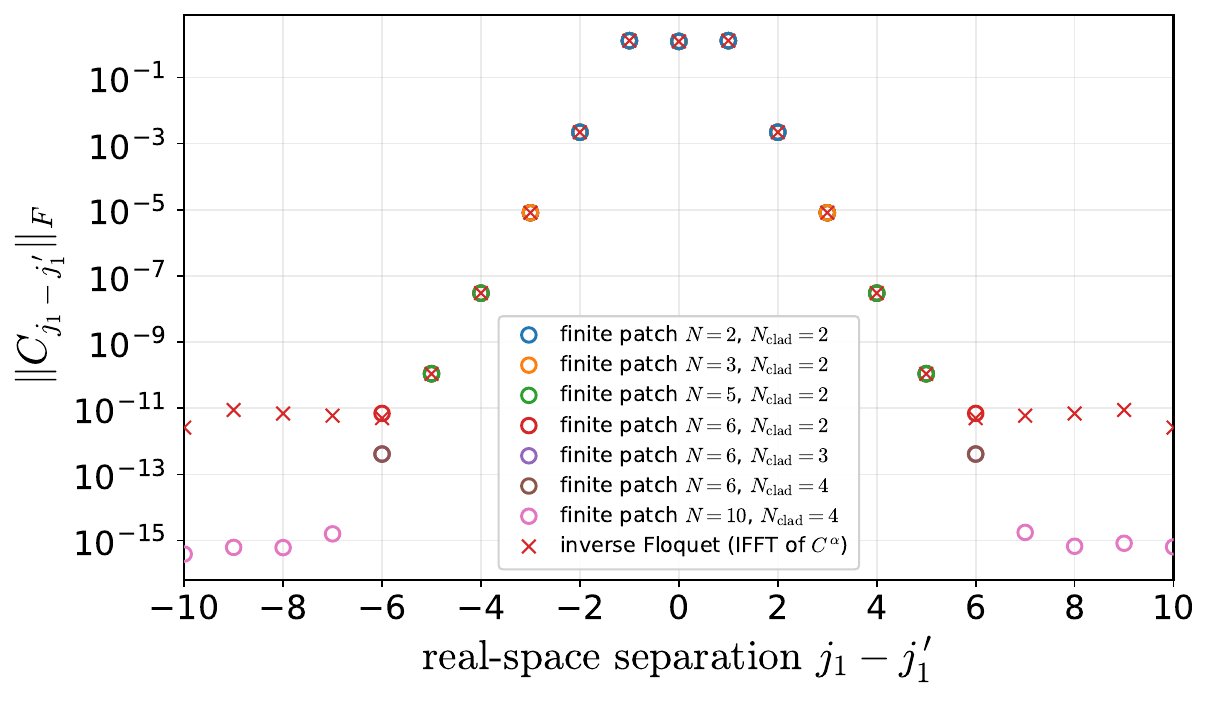}
        \caption{Dipole patch vs. inverse Floquet--Bloch transform of $\mathcal C^\alpha$}
        \label{fig:dip_decayv}
        \end{subfigure}
        \begin{subfigure}{0.8\textwidth}
        \includegraphics[width=\linewidth]{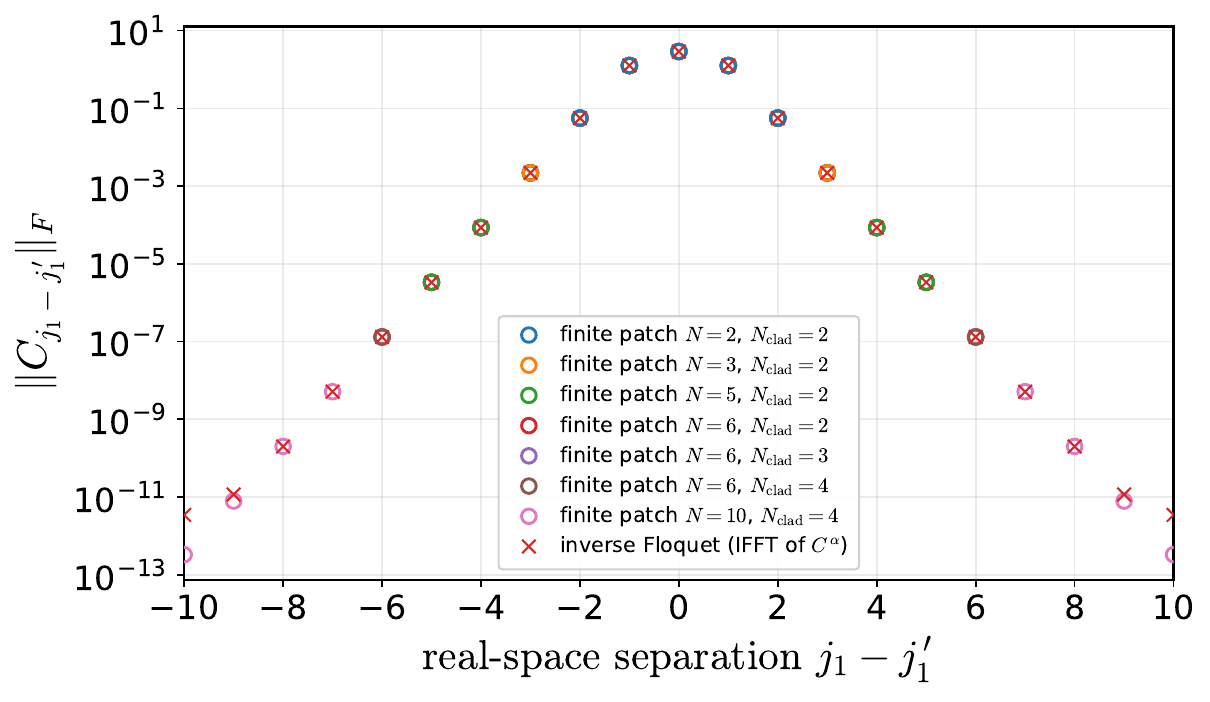}
        \caption{Quadrupole patch vs. inverse Floquet--Bloch transform of $\mathcal C^\alpha$}
        \label{fig:quad_decayv}
        \end{subfigure}
    \caption{Capacitance operator entry decay for patches of size $2N + 1$ vs. the inverse Floquet--Bloch transform of $\mathcal C^\alpha$ with defect resonators with different material parameters. Here, $N_\text{clad} = 6$ for the dipole and quadrupole bands and $N_\text{fringe} = 1$ for all patches.}
    \label{fig:patch_vs_floquetv}
\end{figure}

\subsubsection{Waveguiding by detuning the size of resonators}
\label{linerad}

Let the radii and interior speeds be such that

\begin{itemize}
    \item the crystal resonator radius $R = 0.350$;
    \item the defect resonator radius $\widetilde{R} = 0.455$;
    \item the speed in all resonators (defected and unperturbed) $v_b = \widetilde v_b = 1$.
\end{itemize}

This results in the same interior Neumann frequencies as in \Cref{linemat}.

Again, the dipole and the quadrupole are represented, respectively, as red and purple circles in \Cref{fig:all_bands}, and the capacitance-based results are overlaid as black points. This time, the bands span larger frequency intervals and the quadrupole bands show a deviation at the lower frequencies, which are the furthest away from the base frequency $\omega_0 = 6.7143.$

For the patch decay in \Cref{fig:patch_vs_floquet}, we have similar results as before. The exponential decay is clearly visible, with a very good overlap between the norms of the entries of the IFFT of $\mathcal C^\alpha$ and the patches. The fringe numbers seem to have a more visible effect on the smallest norm entries. In \Cref{fig:quad_decay}, we can see two convergence rates -- the convergence rate slightly decreased after $10^{-7}$ for both the IFFT and patch computations. This suggests that the cause is likely numerical.


\begin{figure}[!h]
    \centering
    \begin{minipage}[b]{0.5\textwidth}
        \centering
        \includegraphics[width=\linewidth]{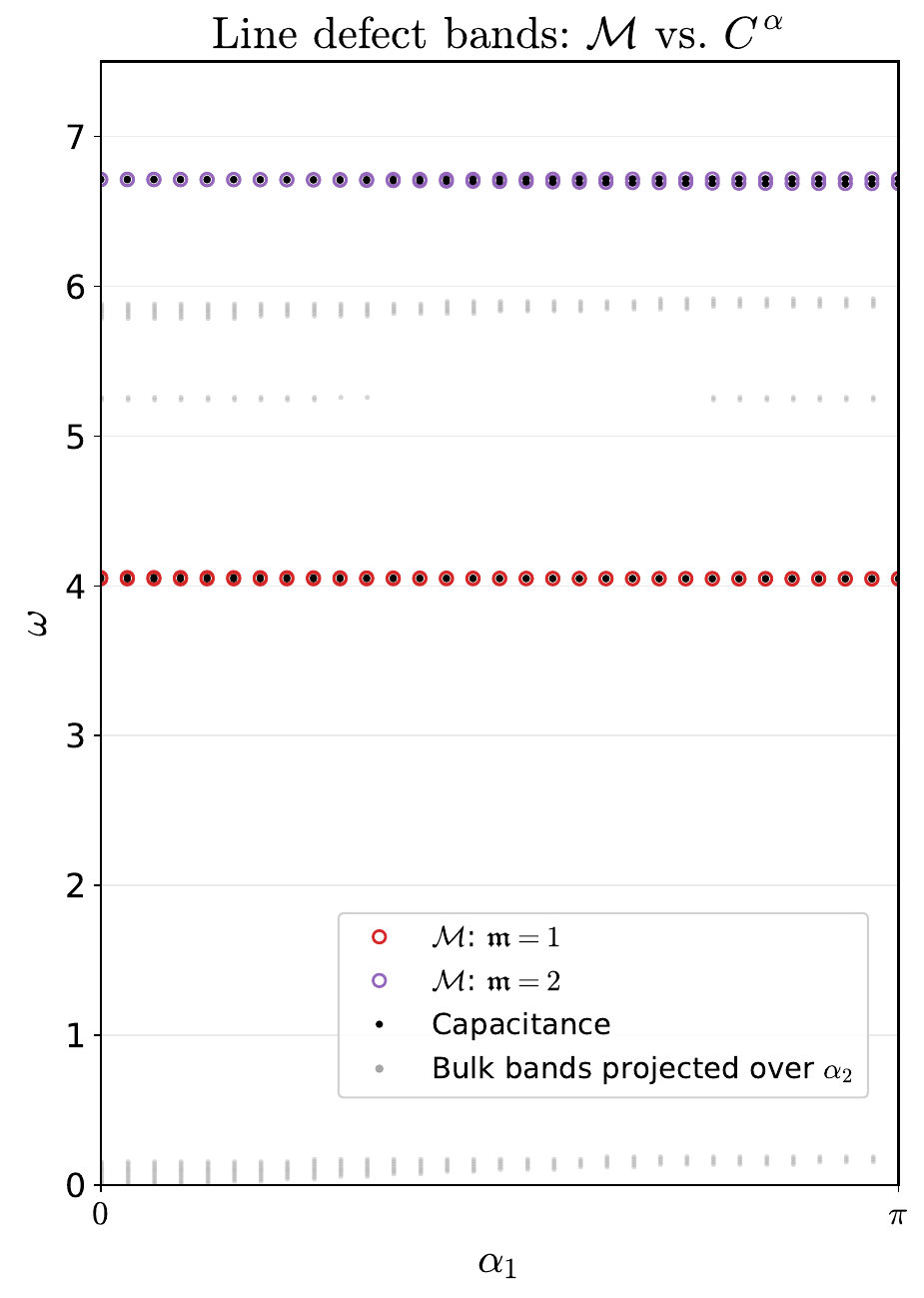}
        \label{fig:bands}
    \end{minipage}
    \hfill
    \raisebox{0.3cm}{%
    \begin{minipage}[b]{0.48\textwidth}
        \centering
        \includegraphics[width=\linewidth]{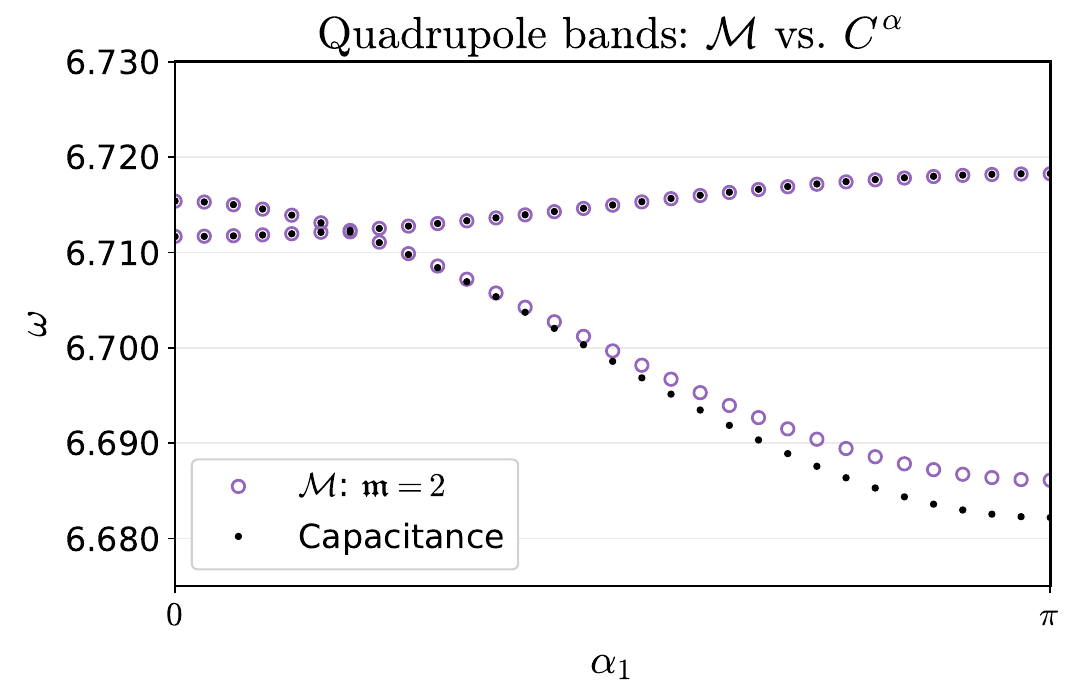}
        \label{fig:quad}

        \includegraphics[width=\linewidth]{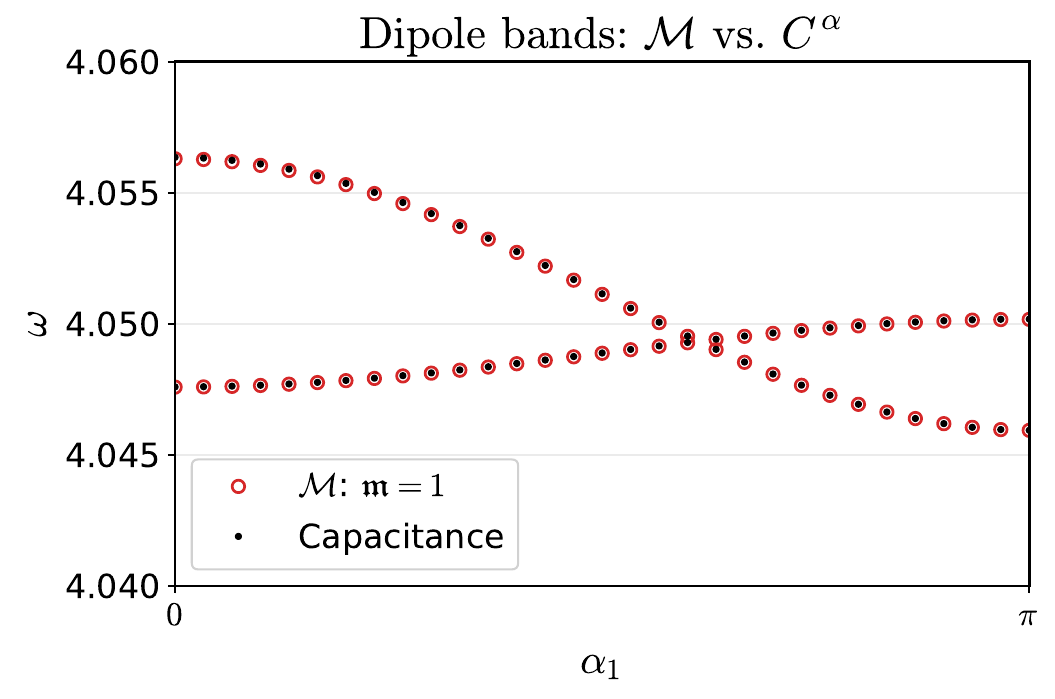}
        \label{fig:dip}
    \end{minipage}%
    }
    \caption{Bulk bands and non-subwavelength defect bands for defected resonators of a different radius with $\alpha_1$ from $0$ to $\pi$, with the dipole and quadrupole bands zoomed in.}
    \label{fig:all_bands}
\end{figure}

\subsection{Bent waveguide}

\begin{figure}[!h]
      \centering
        \begin{subfigure}[t]{0.8\linewidth}
        \includegraphics[width=\linewidth]{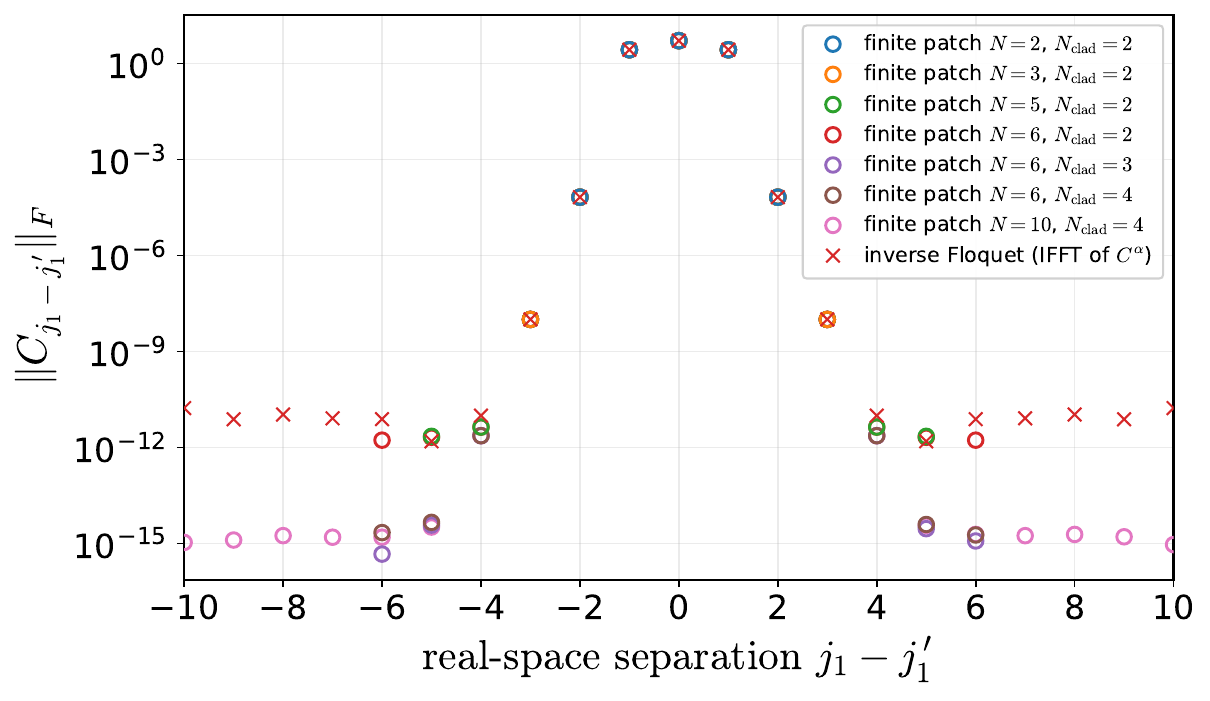}
        \caption{Dipole patch vs. inverse Floquet--Bloch transform of $\mathcal C^\alpha$}
        \label{fig:dip_decay}
     \end{subfigure}
     \begin{subfigure}[t]{0.8\linewidth}
        \includegraphics[width=\linewidth]{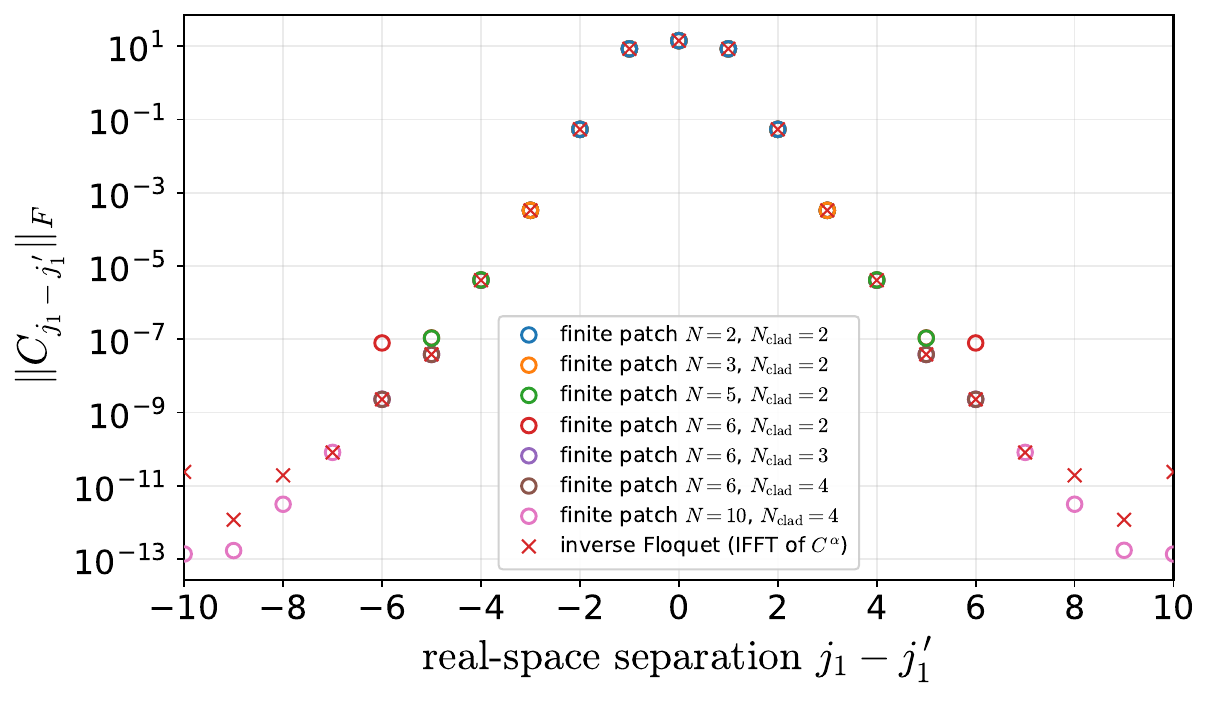}
        \caption{Quadrupole patch vs. inverse Floquet--Bloch transform of $\mathcal C^\alpha$}
        \label{fig:quad_decay}
        \end{subfigure}
    \caption{Capacitance operator entry decay for patches of size $2N + 1$ vs. the inverse Floquet--Bloch transform of $\mathcal C^\alpha$ with $N_\text{clad} = 6$ for the dipole and quadrupole bands and $N_\text{fringe} = 1$ for all patches.}
    \label{fig:patch_vs_floquet}
\end{figure}

For the bent waveguide, we take square patches of size $(2N + 1)\times(2N + 1)$ resonators with free space boundary conditions and one fringe layer to avoid possible edge effects in the capacitance calculation, except for $N = 7$, which has a varying number of fringe layers removed to show how they affect the computation. In every patch, we have $N$ defects for the vertical part of the ``L'' bend, $1$ for the center of the patch (corner of the ``L'' and the reference resonator), and $N$ for the horizontal part. The capacitance matrices are then $(2(N - N_\text{fringe}) + 1)\times(2(N - N_\text{fringe}) + 1)$ block matrices.

To index the resonators, we may pick the reference resonator (here $(j_1, j_2) =(0,0)$) and take the positive $x_1$-distances to be the positive part of the index set and the positive $x_2$-axis part to be the negative part of the index set. This is achievable by setting the distance index to $j_1^\prime - j_1 - j_2^\prime + j_2$. In this way, when we interact with the resonators directly above $(0,0)$, we get negative indices, and when we interact with the resonators to the right of $(0,0)$, we get positive indices, which allows for the same type of symmetric decay plot as for the line waveguide.

We can also reconstruct the leading field associated with each eigenvector using the same finite-cluster fields that enter the patch matrix. For the fixed bent patch $\mathsf P_0^N$, set
\begin{equation*}
\widecheck{\mathcal I}_N(\omega_0) := \left\{ (\vect{j},\ell)\in\mathcal I(\omega_0):D_{\vect{j}}\subset\mathsf P_0^N \right\}.
\end{equation*}
For each $(\vect{j},\ell)\in\widecheck{\mathcal I}_N(\omega_0)$, let $U_{0;\vect{j},\ell}^{N,\mathrm{out}}$ be the outgoing column field on the common cluster exterior $\mathbb R^2\setminus\overline{\mathcal D_0^N}$, with boundary data $u_{\vect{j},\ell}$ on $\partial D_{\vect{j}}$ and zero data on all other components. Let $\widecheck{\mathcal C}_{0,N}^{\mathrm{out}}$ be the matrix assembled from these common-cluster fields. It has size $|\widecheck{\mathcal I}_N(\omega_0)|\times |\widecheck{\mathcal I}_N(\omega_0)|$. If $\mathbf v^{(n)}=(\mathbf v^{(n)}_{\vect{j},\ell})$ is one of its eigenvectors, set
\begin{equation*}
\mathbf b^{(n)} := M_N^{-1/2}\mathbf v^{(n)}, \qquad b^{(n)}_{\vect{j},\ell}=v_{\vect{j}}\mathbf v^{(n)}_{\vect{j},\ell},
\end{equation*}
where $M_N$ is the restriction of $M$ to the finite active index set. Then the corresponding leading-order patch field is
\begin{equation}
\label{formula:eigenmode}
u_n^N(x) \approx
\begin{cases}
\displaystyle \sum_{(\vect{j},\ell)\in\widecheck{\mathcal I}_N(\omega_0)} b^{(n)}_{\vect{j},\ell}U_{0;\vect{j},\ell}^{N,\mathrm{out}}(x), &x\in\mathbb R^2\setminus\overline{\mathcal D_0^N},\\[3mm]
\displaystyle \sum_{\ell=1}^{m_{\vect{j}}}b^{(n)}_{\vect{j},\ell}u_{\vect{j},\ell}(x), &x\in D_{\vect{j}}\subset\mathsf P_0^N.
\end{cases}
\end{equation}
The layer-potential representation in Appendix~\ref{appendix:proofs} is the numerical device used to compute the fields $U_{0;\vect{j},\ell}^{N,\mathrm{out}}$. 
These modes can be seen in \Cref{fig:bent_patch_fieldsv} and \Cref{fig:bent_patch_fields}.

\subsubsection{Waveguiding by detuning the material parameters.}
Using the same parameters as in \Cref{linemat}, we can now repeat the patch computations for the bent waveguide.

The results show exponential decay and nearly perfect overlap for both the dipole and quadrupole in \Cref{fig:patch_bentv}.

The approximate fields in \Cref{fig:bent_patch_fieldsv} show localization along the waveguide and clear dipole and quadrupole modes.

\begin{figure}[!h]
        \centering
\begin{subfigure}{0.8\textwidth}
        \includegraphics[width=\linewidth]{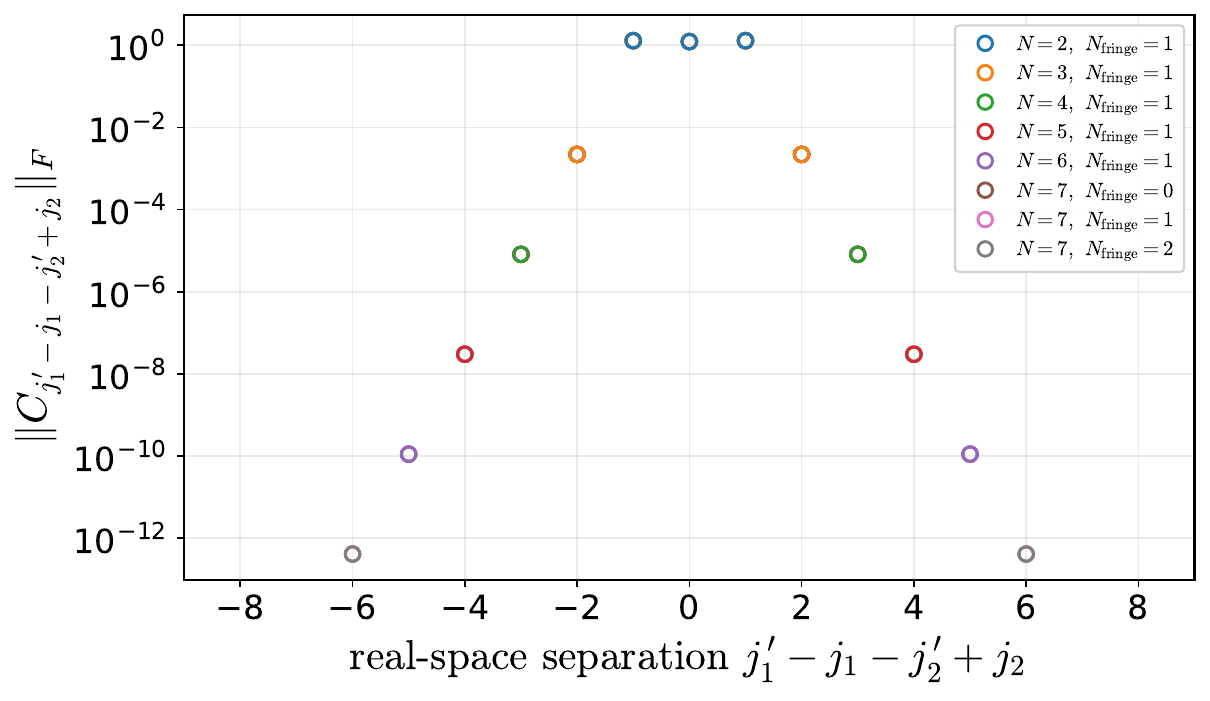}
        \caption{Dipole patch}
        \label{fig:dip_decay5}
        \end{subfigure}
        \begin{subfigure}{0.8\textwidth}
        \includegraphics[width=\linewidth]{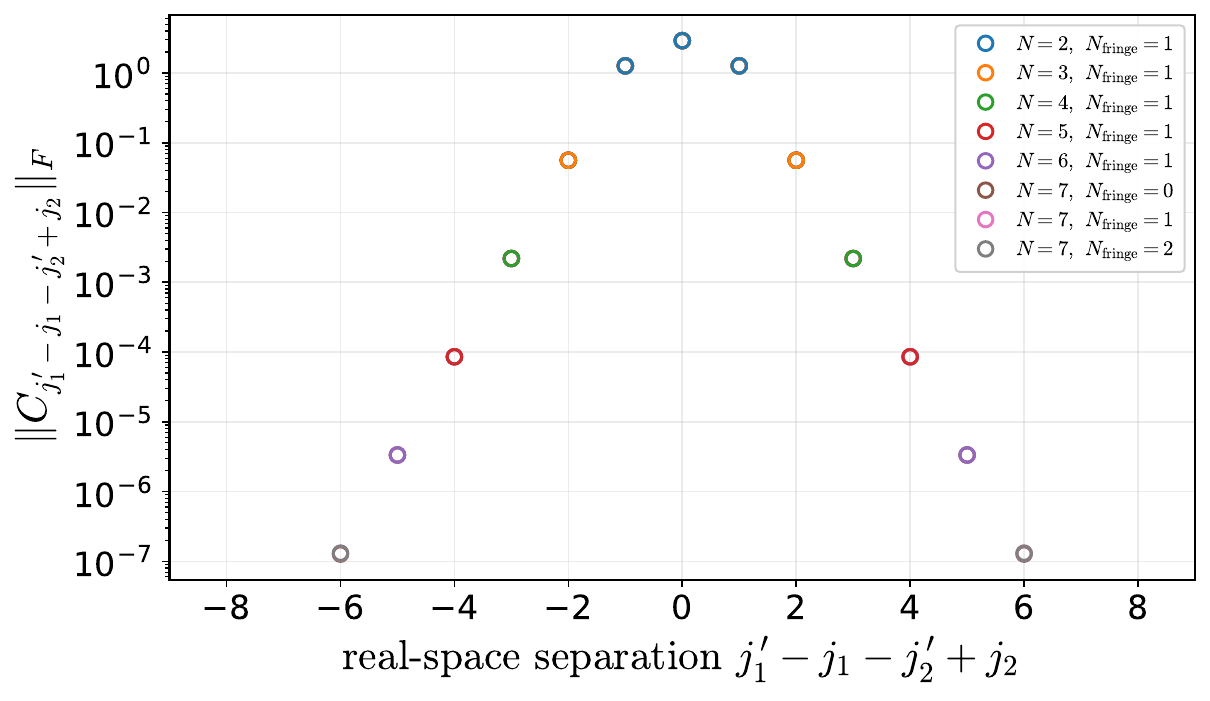}
        \caption{Quadrupole patch}
        \label{fig:quad_decay5}
        \end{subfigure}
    \caption{Capacitance operator entry decay for square patches of bent waveguides of size $(2N + 1)\times(2N + 1)$ with defect resonators of a different material parameter.}
    \label{fig:patch_bentv}
\end{figure}
\begin{figure}[!h]
        \centering
        \begin{subfigure}{0.8\textwidth}
        \includegraphics[width=\linewidth]{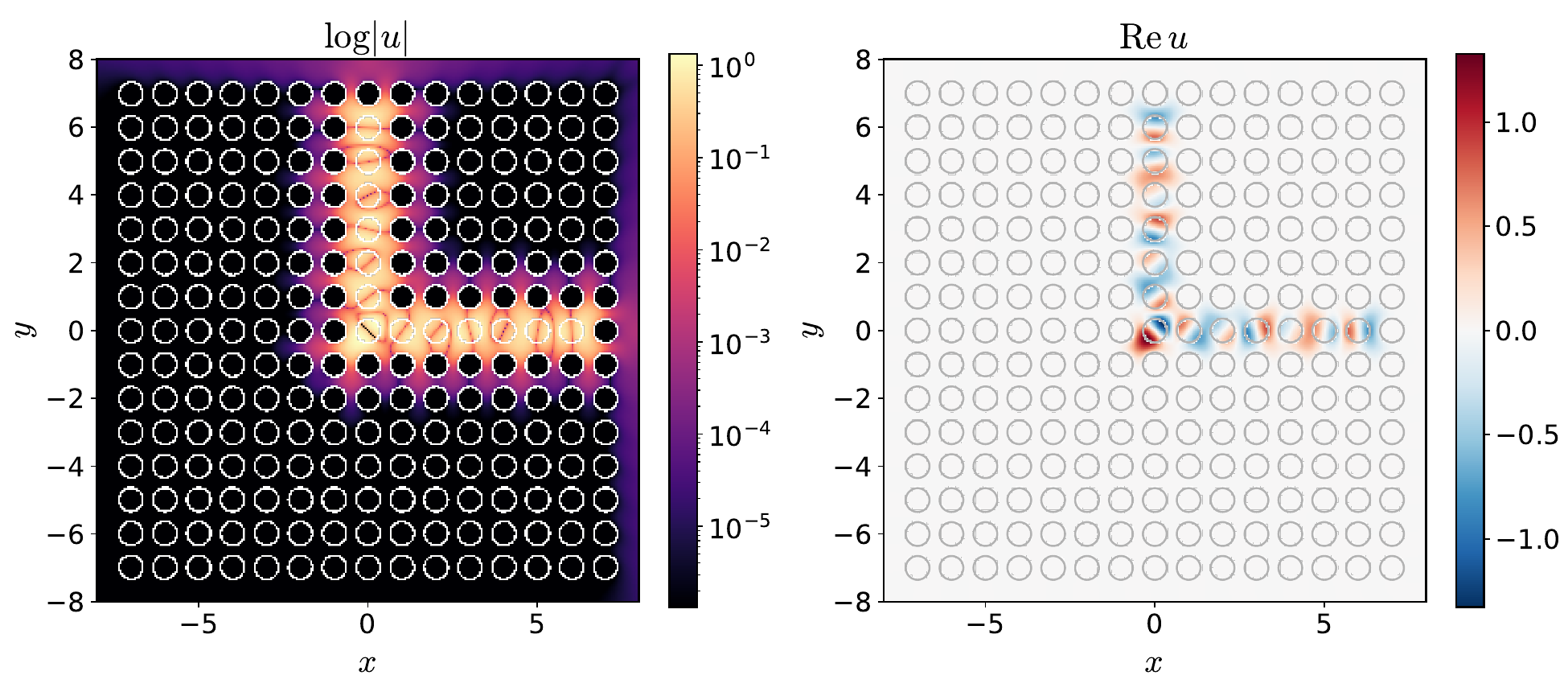}
        \caption{Dipole mode $\lambda = 1.8092 - 8.9992 \cdot 10^{-4}\mathrm{i}$} 
        \label{fig:dip_decay6}
        \end{subfigure}
        \begin{subfigure}{0.8\textwidth}
        \includegraphics[width=\linewidth]{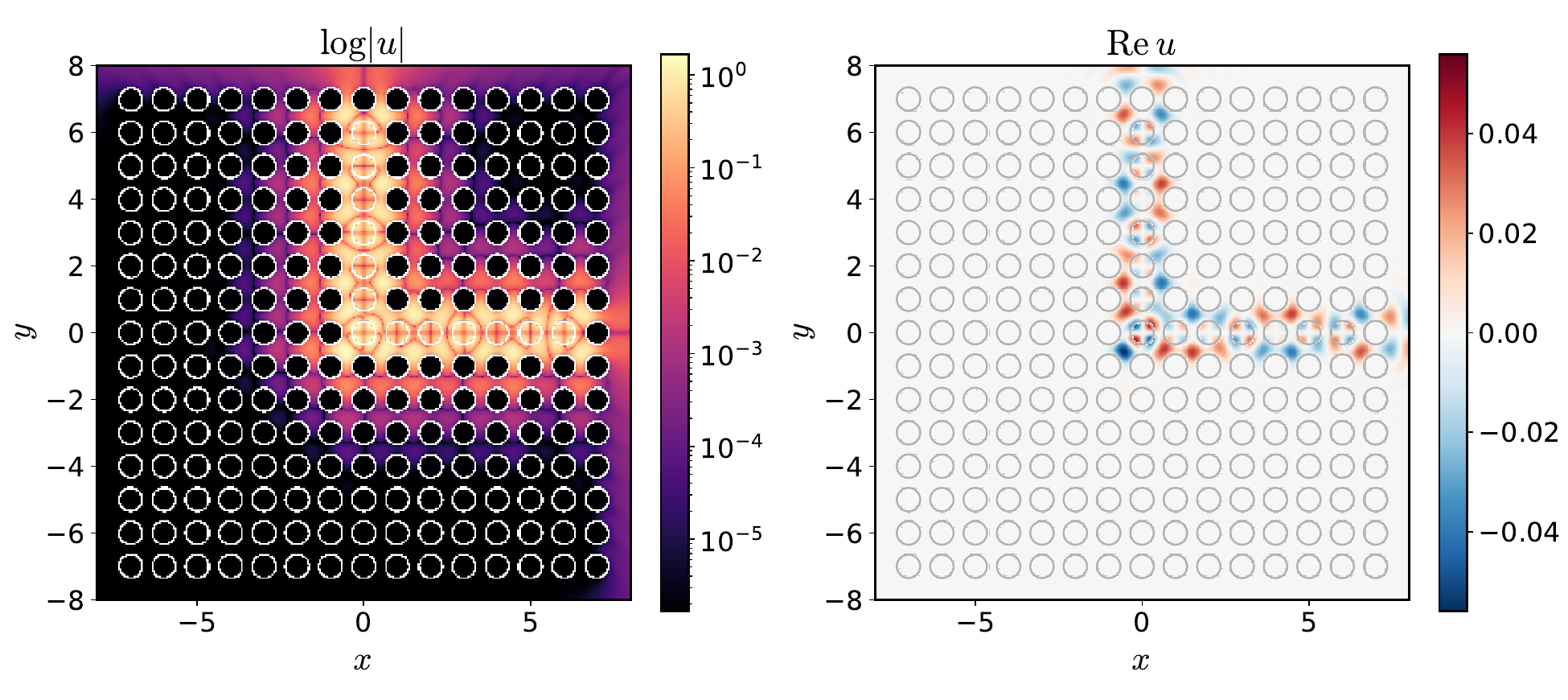}
        \caption{Quadrupole mode $\lambda = 1.9449 - 2.7317 \cdot 10^{-3}\mathrm{i}$} 
        \label{fig:quad_decay6}
        \end{subfigure}
    \caption{Approximate fields of the dipole and quadrupole modes of the defect resonators with different material parameters. Here, $N = 7,~N_\text{fringe} = 1$, with $300 \times 300$ pixels.}
    \label{fig:bent_patch_fieldsv}
\end{figure}

\subsubsection{Waveguiding by detuning the size of resonators.}
Using the same parameters as in \Cref{linerad}, we perform the bent waveguide computations for the defect resonators that are of different radii.

Once again, the results show exponential decay and nearly perfect overlap for both the dipole and the quadrupole in \Cref{fig:patch_bent}. The only exception is \Cref{fig:quad_decay2}, which shows a second convergence rate after $10^{-7}$ just as in \Cref{linerad}.

The approximate fields in \Cref{fig:bent_patch_fields} show localization along the waveguide and clear dipole and quadrupole modes.

\begin{figure}[!h]
\centering
    \begin{subfigure}{0.8\textwidth}
        \includegraphics[width=\linewidth]{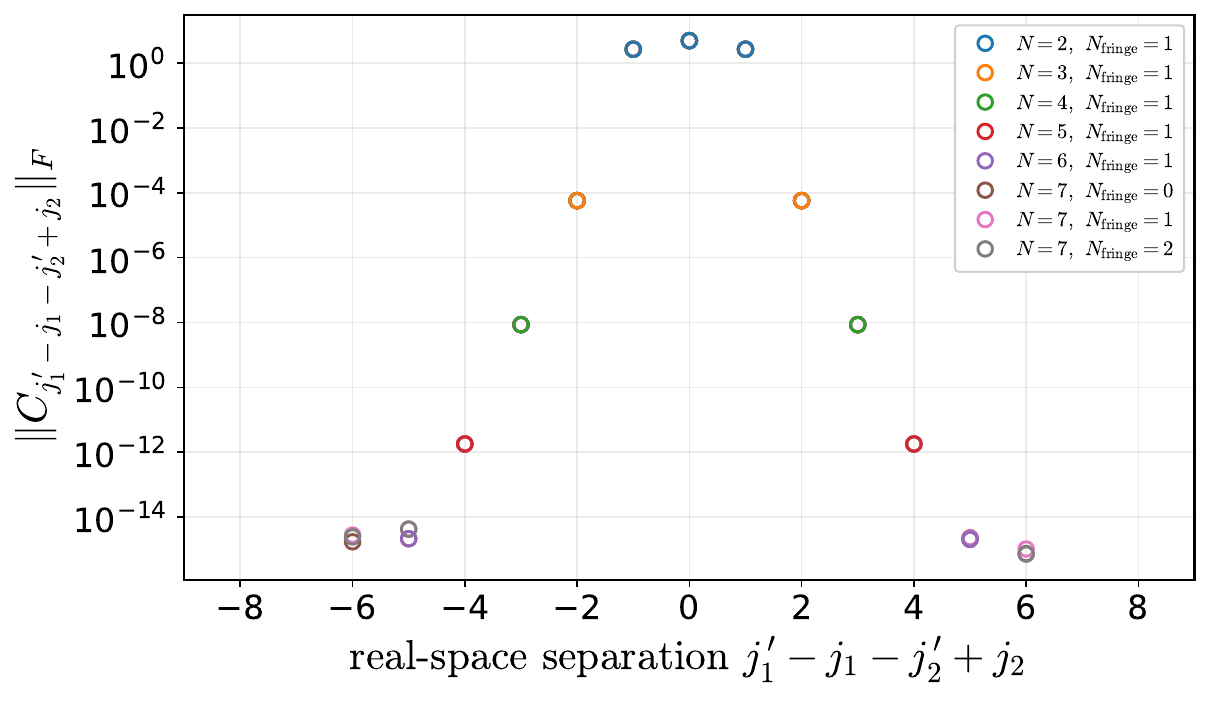}
        \caption{Dipole patch}
        \label{fig:dip_decay2}
        \end{subfigure}
        \begin{subfigure}{0.8\textwidth}
        \includegraphics[width=\linewidth]{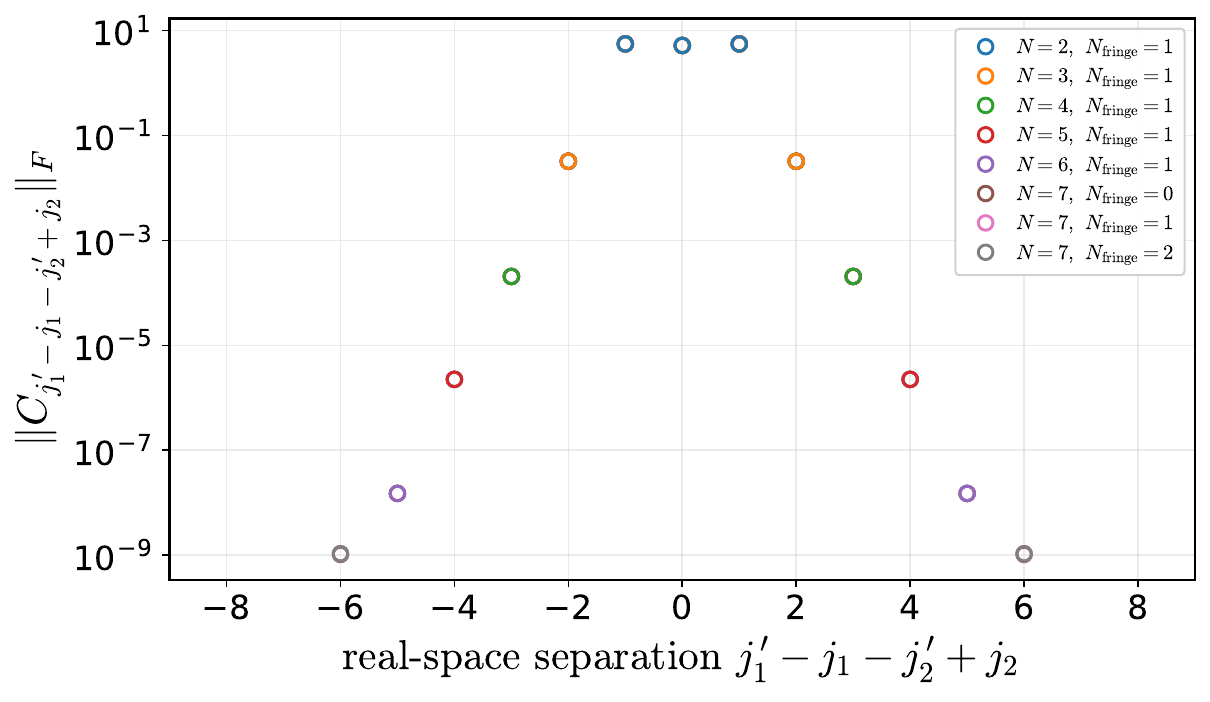}
        \caption{Quadrupole patch}
        \label{fig:quad_decay2}
        \end{subfigure}
    \caption{Capacitance operator entry decay for square patches of bent waveguides of size $(2N + 1)\times(2N + 1)$ with defect resonators of a different radius.}
    \label{fig:patch_bent}
\end{figure}

\begin{figure}[!h]
        \centering
\begin{subfigure}{0.8\textwidth}
        \includegraphics[width=\linewidth]{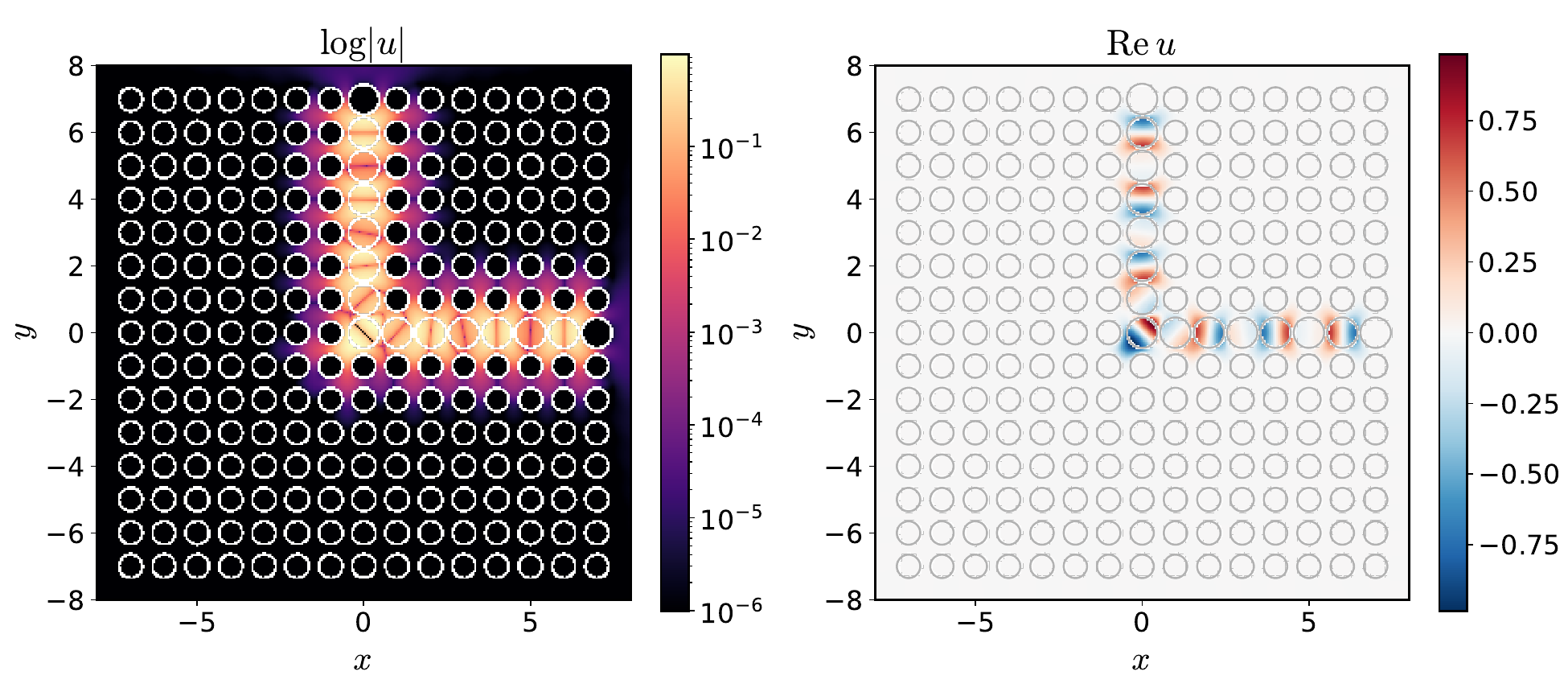}
        \caption{Dipole mode $\lambda = 4.3265 -6.7153\times 10^{-4} \mathrm{i}$} 
        \label{fig:dip_decay3}
        \end{subfigure}
        \begin{subfigure}{0.8\textwidth}
        \includegraphics[width=\linewidth]{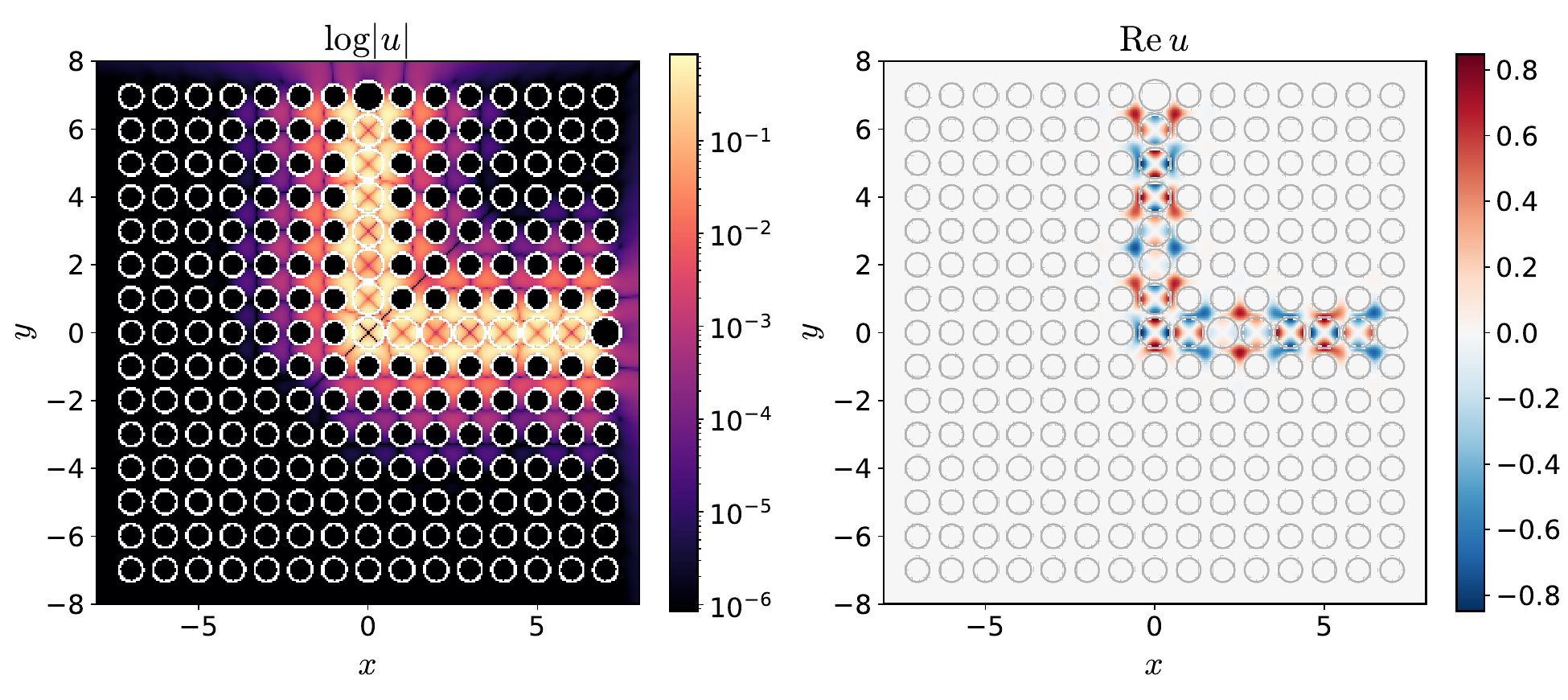}
        \caption{Quadrupole mode $\lambda = 4.9460 - 1.4052 \cdot 10^{-3} \mathrm{i}$} 
        \label{fig:quad_decay3}
        \end{subfigure}
    \caption{Approximate fields of the dipole and quadrupole modes of the defect resonators with different radii. Here, $N = 7,~N_\text{fringe} = 1$, with $300 \times 300$ pixels.}
    \label{fig:bent_patch_fields}
\end{figure}

\section{Concluding remarks}
\label{sec:conclusion}
In this work, we have developed a discrete framework for the analysis and computation of guided modes in high-contrast resonator systems near nonzero interior Neumann frequencies. In the regular exterior regime, where the reference frequency is an interior Neumann eigenvalue but the corresponding exterior wave number squared belongs to the resolvent of minus the exterior Dirichlet Laplacian, the continuous transmission problem reduces, after rescaling around a reference frequency $\omega_0$, to a frequency-dependent capacitance operator acting on the coefficients of the active interior Neumann modes. The norm-resolvent approximation justifies this reduction and shows that compact-defect frequencies and line-defect band functions are determined to first order by the spectrum of the effective operator and its partial Floquet--Bloch transform.

A central feature of the regular regime is the exponential locality of the effective interaction. The exterior spectral gap yields exponential decay of the exterior resolvent and therefore of the off-diagonal frequency-dependent capacitance coefficients. This property allows exponentially accurate interaction truncations and, under a uniform stability assumption for the local Helmholtz problems, an extension of the patch approximation of \cite{ammari2026resolvent_con} beyond the subwavelength regime. The resulting numerical method is particularly well suited to nonperiodic configurations, such as bent waveguides, for which a global Floquet--Bloch reduction does not hold.

The numerical results for dipole and quadrupole resonances illustrate both the accuracy of the first-order frequency-dependent capacitance approximation and the rapid spatial decay of the interaction blocks. They also show that material and geometric detuning can be treated within a common active-mode framework. The finite outgoing computations produce small radiative imaginary parts; these should be distinguished from the real spectrum of the infinite self-adjoint operator in an exterior gap.

Several challenging questions remain open. First, when the reference exterior wavenumber squared belongs to the spectrum of minus the exterior Dirichlet Laplacian, the exterior DtN map is singular, and the leading splitting is expected to be of order $\delta^{1/2}$. It would be interesting to extend the results of \cite{ammariFrequencydependentCapacitanceMatrix2026} in this case to the waveguiding problem with a line defect. Second, another remaining analytical question is to remove or verify, under natural geometric hypotheses, the uniform real-frequency stability assumption \eqref{eq:outgoing-patch-stability} used for the outgoing finite-cluster approximation in free space.  The complex-frequency stabilization developed in this paper provides an unconditional computational alternative, but a direct real-frequency convergence theorem under minimal assumptions would further connect rigorous bounded-patch construction and practical radiation computations. A third direction consists of extending the developed numerical approach to topologically protected interfaces beyond the subwavelength regime. This extension will be the subject of future work.

\begin{code}
The software used to produce the numerical results in this work is openly available at \\ \href{https://github.com/vrabacmath/WaveguideModes}https://github.com/vrabacmath/WaveguideModes.
\end{code}

\begin{acknowledgements}
   This work was partially supported by the National Key R$\&$D Program of China, Grant No. 2024YFA1016000, the City University of Hong Kong start-up fund 7200843, Hong Kong RGC Early Career Scheme grant 21301326, and the Swiss National Science Foundation grant number 200021-236472.
\end{acknowledgements}

\appendix
\section{Existence and characterization of defect and band defect modes}
\label{appendix:proofs}


\subsection{Boundary integral operators}

Following \cite{cbms}, we introduce the following boundary integral formulations of the solutions of \eqref{eq:scattering-problem}. For simplicity, we set $d=2$ and introduce the following layer potentials. For $\omega\in \mathbb{C}$, let
\begin{equation*}
G^{\omega}(x):=-\frac{\mathrm{i}}{4} H^{(1)}_0(\omega|x|), \qquad x\neq 0,
\end{equation*}
be the outgoing Helmholtz Green function in $\mathbb{R}^2$. Here, $H^{(1)}_0$ is the Hankel function of the first kind and order zero.

Next, we introduce the corresponding single-layer potential and Neumann--Poincar\'e operator that are defined by
\begin{equation*}
\mathcal{S}_D^{\omega}[\varphi](x) := \int_{\partial D} G^{\omega}(x-y)\,\varphi(y)\,\mathrm{d}\sigma(y):\ L^2(\partial D) \to H^1_{\rm loc}(\mathbb{R}^2)\,,
\end{equation*}
\begin{equation*}
\mathcal{K}_D^{\omega,*}[\varphi](x) := \int_{\partial D} \frac{\partial}{\partial \nu(x)}G^{\omega}(x-y)\,\varphi(y)\,\mathrm{d}\sigma(y):\ L^2(\partial D) \to L^2(\partial D)\,.
\end{equation*}
Here, $H^1_{\rm loc}(\mathbb{R}^2)$ is the set of functions that together with their weak derivatives are locally square integrable in $\mathbb{R}^2$. With abuse of notation, we use the same notation for the boundary operator $\mathcal{S}_D^{\omega}: L^2(\partial D) \to H^1(\partial D)$, where $H^1(\partial D)$ denotes the space of $L^2$-functions on $\partial D$ with weak $L^2$ tangential derivatives.

We introduce the quasi-periodic Green function $G^{\vect{\alpha},\omega}$, which satisfies
\begin{equation} \label{fGpoisson}
(\Delta + \omega^2) G^{\vect{\alpha},\omega} (x,y) = \sum_{\vect{j} \in \Lambda} \delta_0(x-y - \vect{j}) e^{\i \vect{j} \cdot \vect{\alpha}} \quad \text{in } \mathbb{R}^2,
\end{equation}
where $\delta_0$ denotes the Dirac mass at $0$.

If $\omega \neq |\vect{q} + \vect{\alpha}|, \forall\; \vect{q} \in \Lambda^*$, then, using Poisson's summation formula
\begin{equation} \label{poisson}
\frac{1}{|Y|} \sum_{\vect{q} \in \Lambda^*} e^{\i (\vect{q} +\vect{\alpha}) \cdot x} = \sum_{\vect{j} \in \Lambda} \delta_0(x-\vect{j}) e^{\i \vect{j}\cdot\alpha},
\end{equation}
where $|Y|$ denotes the volume of $Y$, we can show that the quasi-periodic Green function $G^{\vect{\alpha},\omega}$ can be represented as the sum of augmented plane waves over the reciprocal lattice: \begin{equation} \label{rephelm1} G^{\vect{\alpha},\omega} (x,y) = \frac{1}{|Y|} \sum_{\vect{q}\in \Lambda^*} \frac{e^{\i (\vect{q}+ \vect{\alpha}) \cdot (x-y)}}{\omega^2 - |\vect{q} + \vect{\alpha}|^2};\end{equation} see, for instance, \cite{photonic2018}.

Whenever the quasi-periodic layer-potential representation is used in the following, we assume that $\omega \neq |\vect{q}+ \vect{\alpha}|$ for all $\vect{q} \in \Lambda^*$. For $\omega > 0$, we let $\mathcal{S}_D^{\vect{\alpha},\omega}$ and $\mc D_D^{\vect{\alpha},\omega}$ be the quasi-periodic single- and double-layer potentials associated with $G^{\vect{\alpha},\omega}$ on $D$; that is, for a given density $\vp \in L^2(\partial D)$,
\begin{align*}
\mc S_D^{\vect{\alpha}, \omega}[\varphi] (x) & = \int_{\partial D} G^{\vect{\alpha},\omega}(x,y) \varphi (y) \, \mathrm{d} \sigma (y), \quad x \in \mathbb{R}^2, \\
\mc D_D^{\vect{\alpha},\omega} [\varphi] (x) & = \int_{\partial D} \frac{\partial G^{\vect{\alpha},\omega} (x,y)}{\partial \nu(y)} \varphi (y) \, \mathrm{d} \sigma (y), \quad x \in \mathbb{R}^2 \setminus \partial D.
\end{align*}
We also recall the jump relations obeyed by the double-layer potential and by the normal derivative of the single-layer potential on $\partial D$:
\begin{align*}
\frac{\partial (\mc S_D^{\vect{\alpha},\omega} [\varphi])}{\partial \nu} \bigg |_{\pm}(x) & = \bigg (\pm \frac{1}{2} I + (\mc K_D^{-\vect{\alpha},\omega})^*\bigg )[\varphi] (x), \quad x \in \partial D, \\
(\mc D_D^{\vect{\alpha},\omega}[\varphi]) \bigg|_{\pm} (x) & = \bigg(\mp \frac{1}{2} I + \mc K_D^{\vect{\alpha},\omega} \bigg)[\varphi] (x), \quad x \in \partial D,
\end{align*}
for $\varphi \in L^2(\partial D)$, where $\mc K_D^{\vect{\alpha},\omega}$ is the operator on $L^2(\partial D)$ defined by $$ \mc K_D^{\vect{\alpha},\omega}[\varphi] (x) = \int_{\partial D} \frac{\partial G^{\vect{\alpha},\omega}(x,y)}{\partial \nu(y)} \varphi (y) \, \mathrm{d} \sigma (y) $$ and $(\mc K_D^{-\vect{\alpha},\omega})^*$ is the $L^2$-adjoint operator of $\mc K_D^{-\vect{\alpha},\omega}$, which is given by $$ (\mc K_D^{-\vect{\alpha},\omega})^*[\varphi] (x) = \int_{\partial D} \frac{\partial G^{\vect{\alpha},\omega}(x,y)}{\partial \nu(x)} \varphi (y) \, \mathrm{d} \sigma (y); $$ see again \cite{photonic2018}.

\subsection{Characterizations of defect and band defect modes and their associated eigenmodes}

This appendix recalls the two pure detuning cases used in \Cref{sec:numerical}: either the interior wave speed or the disk radius is changed. The simultaneous shape-and-parameter detuning allowed by the abstract framework of \Cref{sec:problem-formulation} is not used in these legacy characteristic-value formulas.

To characterize and approximate the defect and band defect modes and their associated eigenmodes, we introduce the operator $\mathcal{A}^{\vect{\alpha}}(\omega,\delta):L^2(\partial D)\times L^2(\partial D)\to H^1(\partial D)\times L^2(\partial D)$ defined by
\begin{equation}\label{aomegadeltalpha}
\mathcal{A}^{\vect{\alpha}}(\omega,\delta):= \begin{pmatrix} {\mc S}_D^{\omega/v_b} & -\mc S_D^{\vect{\alpha},\omega/v}\\
-\frac{1}{2}I+ {\mc K}_D^{\omega/v_b,*} & - \delta \left(\frac{1}{2}I+\Ka{\vect{\alpha}}{\omega/v}\right)
\end{pmatrix}.
\end{equation}

We start by characterizing defect modes. In exactly the same way as in \cite{cbms,erik2019}, we can show that the defect frequencies that bifurcate from $\omega_0$ are precisely the characteristic values of the operator-valued function
\begin{align} \label{eq:Mdensity}
\omega \mapsto \mathcal{M}(\omega,\delta) :=I+ \frac{1}{(2\pi)^2} \big(\widetilde{\mathcal{A}}(\omega,\delta) - \mathcal{A}(\omega,\delta)\big) \int_{Y^*}\mathcal{A}^{\vect{\alpha}}(\omega,\delta)^{-1} \mathrm{d} \alpha
\end{align}
within a band gap containing $\omega_0$. Here, $\widetilde{\mathcal{A}}(\omega,\delta), \mathcal{A}(\omega,\delta)$ are, respectively, defined by
\begin{equation}\label{opAD}
{\mathcal{A}}(\omega,\delta):= \begin{pmatrix} {\mc S}_D^{\omega/v_b} & -\mc S_D^{\omega/v}\\
-\frac{1}{2}I+ {\mc K}_D^{\omega/v_b,*} & - \delta \left(\frac{1}{2}I+\mc{K}_D^{\omega/v,*}\right)
\end{pmatrix}
\end{equation}
and, in the case of a defect in the material parameter,
\begin{equation} \label{deftildea}
\widetilde{\mathcal{A}}(\omega,\delta):= \begin{pmatrix} {\mc S}_D^{\omega/\widetilde{v}_b} & -\mc S_D^{\omega/v}\\
-\frac{1}{2}I+ {\mc K}_D^{\omega/\widetilde{v}_b,*} & - \delta \left(\frac{1}{2}I+\mc{K}_D^{\omega/v,*}\right)
\end{pmatrix}.
\end{equation}
In the case of a disk defect resonator with a detuned radius, instead of \eqref{deftildea}, we let $\widetilde{\mathcal{A}}(\omega,\delta)$ be given by
\begin{equation}\label{eq:ADd}
\widetilde{\mathcal{A}}(\omega,\delta) := (\mathcal{P}_2)^{-1} {\mathcal{A}}(\omega,\delta; \widetilde{D}) \mathcal{P}_1,
\end{equation}
where the operator ${\mathcal{A}}(\omega,\delta; \widetilde{D})$ is defined in \eqref{opAD} with $D$ replaced by $\widetilde{D}$ and the operators $\mathcal{P}_1: L^2(\partial D) \times L^2(\partial D) \rightarrow L^2(\partial \widetilde{D}) \times L^2(\partial \widetilde{D})$ and $\mathcal{P}_2: L^2(\partial D) \times L^2(\partial D)\rightarrow L^2(\partial \widetilde{D}) \times L^2(\partial \widetilde{D})$ are defined by
\begin{align*}
\mathcal{P}_1\begin{pmatrix} e^{\i n \theta} \\
e^{\i m \theta}\end{pmatrix} &= \delta_{mn} \frac{r}{\widetilde r}\begin{pmatrix} \displaystyle \frac{H_n^{(1)}(\omega r/v_b) }{H_n^{(1)}(\omega \widetilde r/v_b)}e^{\i n \theta} \\[0.8em]
\displaystyle \frac{J_n(\omega r/v) }{J_n(\omega \widetilde r/v)}e^{\i n \theta}
\end{pmatrix}, &
\mathcal{P}_2\begin{pmatrix} e^{\i n \theta} \\
e^{\i m \theta}\end{pmatrix} &= \delta_{mn} \begin{pmatrix} \displaystyle \frac{J_n(\omega \widetilde r/v ) }{J_n(\omega r/v)}e^{\i n \theta} \\[0.8em]
\displaystyle \frac{J_n'(\omega \widetilde r/v) }{J_n^\prime(\omega r/v)}e^{\i n \theta}
\end{pmatrix}.
\end{align*}
Here, $J_n$ and $H_n^{(1)}$ are the Bessel and Hankel functions of order $n$, respectively. Note that $\mathcal{A}^\alpha$ is invertible for sufficiently small $\delta$ and for $\omega$ within that band gap around $\omega_0$.

In the case of a line defect, the defect band is characterized similarly. From \cite{ammariSubwavelengthGuidedModes2021,cbms}, we can show that for each $\alpha_1$, the $\alpha_1$-quasi-periodic Bloch eigenfrequencies that bifurcate from $\omega_0$ are precisely the characteristic values of the $\alpha_1$-quasi-periodic operator-valued function
\begin{align} \label{eq:MdensityLN}
\omega \mapsto \mathcal{M}(\omega,\delta; \alpha_1) :=I+ \frac{1}{2\pi} \big(\widetilde{\mathcal{A}}(\omega,\delta) - \mathcal{A}(\omega,\delta)\big) \int_{\widetilde{Y}^*}\mathcal{A}^{(\alpha_1,\alpha_2)}(\omega,\delta)^{-1} \mathrm{d} \alpha_2
\end{align}
within the band gap around $\omega_0$.

The eigenmodes associated with the defect and band defect frequencies can be represented in terms of the root functions associated with the characteristic values in \eqref{eq:Mdensity} and \eqref{eq:MdensityLN}, respectively. As in the subwavelength regime, the eigenmodes can be approximated, as $\delta$ tends to zero, in terms of eigenvectors of the frequency-dependent capacitance matrix. Under the assumptions of \cref{thm:outgoing-patch}, combining this approximation with the outgoing patch convergence gives the finite-cluster reconstruction \eqref{formula:eigenmode}.


\printbibliography

\end{document}